\DeclarePairedDelimiter\abs{\lvert}{\rvert}
\newcommand\smvee{\raise0.9ex\hbox{$\scriptscriptstyle\vee$}}
\def\oversortoftilde#1{\mathop{\vbox{\m@th\ialign{##\crcr\noalign{\kern3\p@}%
      \sortoftildefill\crcr\noalign{\kern3\p@\nointerlineskip}%
      $\hfil\displaystyle{#1}\hfil$\crcr}}}\limits}
\def\sortoftildefill{$\m@th \setbox\z@\hbox{$\braceld$}%
  \braceld\leaders\vrule \@height\ht\z@ \@depth\z@\hfill\braceru$}
    \DeclareFontFamily{U}{wncy}{}
    \DeclareFontShape{U}{wncy}{m}{n}{<->wncyr10}{}
    \DeclareSymbolFont{mcy}{U}{wncy}{m}{n}
    \DeclareMathSymbol{\Sh}{\mathord}{mcy}{"58}
\DeclareMathOperator{\Norm}{Norm}
\DeclareMathOperator{\Ker}{Ker}
\DeclareMathOperator{\Pic}{Pic}
\DeclareMathOperator{\Ind}{Ind}
\DeclareMathOperator{\Q}{\mathbf{Q}}
\DeclareMathOperator{\Z}{\mathbf{Z}}
\DeclareMathOperator{\C}{\mathbf{C}}
\DeclareMathOperator{\Gal}{Gal}
\DeclareMathOperator{\Spec}{Spec}
\DeclareMathOperator{\Sel}{Sel}
\DeclareMathOperator{\Hom}{Hom}
\DeclareMathOperator{\Tr}{Tr}
\DeclareMathOperator{\fppf}{fppf}
\DeclareMathOperator{\etale}{et}
\DeclareMathOperator{\Frob}{Frob}
\DeclareMathOperator{\SL}{SL}
\DeclareMathOperator{\GL}{GL}
\DeclareMathOperator{\rk}{rk}
\DeclareMathOperator{\Eis}{Eis}
\newcommand{\cf}{\textit{cf. }}
\newcommand{\ie}{\textit{i.e. }}
\newcommand{\eg}{\textit{eg. }}
\theoremstyle{definition}
\newtheorem*{fact}{Fact}
\newtheorem{rems}{Remarks}[section]
\theoremstyle{plain}
\newtheorem{thm}{Theorem}[section]
\newtheorem{lem}[thm]{Lemma}
\newtheorem{corr}[thm]{Corollary}
\newtheorem{prop}[thm]{Proposition}
\newenvironment{hypo}[1]
  {\innercustomthm}
  {\endinnercustomthm}
\title{On the arithmetic of special values of $L$-functions for certain abelian varieties with a rational isogeny}
\author{Emmanuel Lecouturier and Jun Wang}
\begin{document}
\maketitle

\begin{abstract} 
Let $N$ and $p$ be primes $\geq 5$ such that $p \mid \mid N-1$. In this situation, Mazur \cite{Mazur_Eisenstein} defined and studied the $p$-\emph{Eisenstein quotient} $\tilde{J}^{(p)}$ of $J_0(N)$. We prove a kind of modulo $p$ version of the Birch and Swinnerton-Dyer conjecture for the ``$p$-Eisenstein part'' of even quadratic twists of $\tilde{J}^{(p)}$. Our result is the analogue for even quadratic twists of a result of Mazur \cite{Mazur_1979} concerning odd quadratic twists.
\end{abstract}
\tableofcontents

\section{Introduction}\label{Section_introduction}

Let $N$ and $p$ be prime numbers $\geq 5$ such that $p \mid N-1$. In this situation, Mazur proved in his seminal work \cite{Mazur_Eisenstein} that there is a cuspidal eigenform $f \in S_2(\Gamma_0(N))$ which is Eisenstein modulo $p$. More precisely, there is a prime above $\mathfrak{p}$ in the ring of integer $\mathcal{O}_f$ of the Hecke field of $f$ such that 
\begin{equation}\label{eq_Eis_congruence}
f \equiv E_2 \text{ (modulo }\mathfrak{p}\text{),}
\end{equation}
where $$E_2 = \frac{N-1}{24} + \sum_{n\geq 1} \left(\sum_{d \mid n, \gcd(d,N)=1} d\right)\cdot q^n$$
is the unique Eisenstein series of weight $2$ and level $\Gamma_0(N)$. Actually, there may be several such cuspidal eigenforms $f$. We denote by $S_2^{\Eis}(N,p)$ the (finite and non-empty) set of cuspidal eigenforms in $S_2(\Gamma_0(N))$ which satisfy a congruence of the form (\ref{eq_Eis_congruence}).

In order to study in more details (\ref{eq_Eis_congruence}), Mazur studied the \emph{Eisenstein ideal}. Let $\mathbb{T}^0$ be the cuspidal Hecke algebra over $\Z$ acting on $S_2(\Gamma_0(N))$. Let $I \subset \mathbb{T}^0$ be Mazur's Eisenstein ideal, generated by the Hecke operators $T_{\ell}-\ell-1$ for primes $\ell \neq N$ as well as $U_N-1$. Mazur proved that $\mathbb{T}^0/I$ is cyclic of order the numerator of $\frac{N-1}{12}$ (\cf \cite[Proposition II.9.7]{Mazur_Eisenstein}). Let $\mathfrak{P}$ be the maximal ideal of $\mathbb{T}^0$ generated by $I$ and $p$. We denote by $\mathbb{T}_{\mathfrak{P}}^0$ the $\mathfrak{P}$-adic completion of $\mathbb{T}^0$.  More generally, if $M$ is a $\mathbb{T}^0$-module then we denote by $M_{\mathfrak{P}}$ the $\mathfrak{P}$-adic completion of $M$, \ie $M_{\mathfrak{P}} = M\otimes_{\mathbb{T}^0} \mathbb{T}_{\mathfrak{P}}^0$. 

For simplicity, in all this paper we will assume that $p^2 \nmid N-1$. In this case, Mazur proved in \cite[Proposition II.19.1]{Mazur_Eisenstein} that $\mathbb{T}_{\mathfrak{P}}^0$ is a DVR. Many of our results could be formulated and proved without this assumption, but it would complicate the proofs and the notation.

Mazur defined in \cite[Definition II.10.4]{Mazur_Eisenstein} the $p$-\emph{Eisenstein quotient} $\tilde{J}^{(p)}$ of $J_0(N)$. It is the quotient of the Jacobian $J_0(N)$ of the modular curve $X_0(N)$ corresponding, roughly speaking, to the eigenforms in $S_2^{\Eis}(N,p)$. More precisely, Mazur defines it as follows:
$$\tilde{J}^{(p)} := J_0(N)/\gamma_{\mathfrak{P}}\cdot J_0(N)$$
where 
\begin{equation}\label{eq_gamma_P}
\gamma_{\mathfrak{P}} = \Ker(\mathbb{T}^0 \rightarrow \mathbb{T}_{\mathfrak{P}}^0) \text{ .}
\end{equation}
Note that 
$$\dim \tilde{J}^{(p)} = \# S_2^{\Eis}(N,p) \text{ .}$$

Mazur studied in \cite{Mazur_1979} a modulo $\mathfrak{P}$ version of the Birch and Swinnerton-Dyer conjecture for the base change of $\tilde{J}^{(p)}$ to imaginary quadratic fields, or equivalently for quadratic twists of $\tilde{J}^{(p)}$ over $\Q$ by odd quadratic Dirichlet characters. Let us recall Mazur's result. In this paper, fix a quadratic field $K$ with discriminant $D$ and associated quadratic character $\chi_D : (\Z/D\Z)^{\times} \rightarrow \{\pm 1\}$. Let $h(K)$ be the class number of the ring of integers $\mathcal{O}_K$ of $K$. We denote by $\tilde{J}^{(p)}_{D}$ the quadratic twist of $\tilde{J}^{(p)}$ corresponding to $K$.  Note that both $\tilde{J}^{(p)}$ and $\tilde{J}^{(p)}_{D}$ have a canonical action of $\mathbb{T}^0/\gamma_{\mathfrak{P}}$.

We would like to talk about the ``$\mathfrak{P}$-component of the algebraic part of the $L$-function of $\tilde{J}^{(p)}_{D}$ at $s=1$''. Mazur considers in \cite[Chap. IV, \S 1]{Mazur_1979} the universal special value $\Lambda(U_{\mathfrak{P}}, \chi_D)$ (when $D<0$, \ie when $\chi_D$ is odd). See section \ref{section_L_modSymb} (\ref{eq_def_univ_L}) for a precise definition. Let us just say here that $\Lambda(U_{\mathfrak{P}}, \chi_D) \in H_1(X_0(N), \Z)_{\mathfrak{P}}^{\pm}$ where $\chi_D(-1) = \pm 1$.

\begin{thm}\cite[Chap. IV, \S 1, Theorem]{Mazur_1979}\label{Mazur_main_thm}
Assume $D<0$, $N$ is inert in $K$ and $p\nmid D$. We have $\Lambda(U_{\mathfrak{P}}, \chi_D) \in I \cdot H_1(X_0(N), \Z)_{\mathfrak{P}}^{-}$ if and only if the Mordell--Weil rank of $\tilde{J}^{(p)}_{D}$ is $>0$ or the completion at $\mathfrak{P}$ of the Tate--Shafarevich group of $\tilde{J}^{(p)}_{D}$ is non-trivial.
\end{thm}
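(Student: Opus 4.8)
The plan is to reduce the statement to a congruence modulo $\mathfrak{P}$ and then run a $\mathfrak{P}$-isogeny descent on $\tilde{J}^{(p)}_{D}$, exploiting the reducibility of the residual Galois representation attached to $\mathfrak{P}$; this is, in substance, Mazur's argument, organised around Selmer groups.

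\emph{Step 1: reduction modulo $\mathfrak{P}$.} Since $p^{2}\nmid N-1$, the ring $\mathbb{T}^{0}_{\mathfrak{P}}$ is a DVR whose maximal ideal equals $I\cdot\mathbb{T}^{0}_{\mathfrak{P}}$, and by multiplicity one at $\mathfrak{P}$ the module $H_{1}(X_{0}(N),\mathbf{Z})^{-}_{\mathfrak{P}}$ is free of rank one over $\mathbb{T}^{0}_{\mathfrak{P}}$. Hence $\Lambda(U_{\mathfrak{P}},\chi_{D})\in I\cdot H_{1}(X_{0}(N),\mathbf{Z})^{-}_{\mathfrak{P}}$ if and only if the image $\overline{\Lambda}$ of $\Lambda(U_{\mathfrak{P}},\chi_{D})$ in the one-dimensional $\mathbf{F}_{p}$-vector space $H_{1}(X_{0}(N),\mathbf{Z})^{-}_{\mathfrak{P}}/\mathfrak{P}$ vanishes. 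On the arithmetic side, the Kummer sequence
\[
0\to\tilde{J}^{(p)}_{D}(\mathbf{Q})\otimes\mathbf{Q}_{p}/\mathbf{Z}_{p}\to\Sel_{p^{\infty}}(\tilde{J}^{(p)}_{D})\to\Sh(\tilde{J}^{(p)}_{D})[p^{\infty}]\to 0,
\]
together with the vanishing $\tilde{J}^{(p)}_{D}(\mathbf{Q})[\mathfrak{P}]=0$ --- which holds because the two Jordan--H\"older factors $(\mathbf{Z}/p)(\chi_{D})$ and $\mu_{p}(\chi_{D})$ of $\tilde{J}^{(p)}_{D}[\mathfrak{P}]$ are both nontrivial, using $D<0$ and $p\nmid D$ --- shows that ``the Mordell--Weil rank of $\tilde{J}^{(p)}_{D}$ is $>0$ or $\Sh(\tilde{J}^{(p)}_{D})_{\mathfrak{P}}$ is nontrivial'' is equivalent to $\Sel_{p^{\infty}}(\tilde{J}^{(p)}_{D})_{\mathfrak{P}}\neq 0$. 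So it remains to prove that $\overline{\Lambda}=0$ if and only if $\Sel_{p^{\infty}}(\tilde{J}^{(p)}_{D})_{\mathfrak{P}}\neq 0$.

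\emph{Step 2: the descent.} I would transport the Galois action to the homology through the comparison isomorphism $H_{1}(X_{0}(N),\mathbf{Z})_{\mathfrak{P}}/\mathfrak{P}\cong\tilde{J}^{(p)}[\mathfrak{P}]$, which is $2$-dimensional over $\mathbf{F}_{p}$ by multiplicity one. Mazur's study of the residual representation at $\mathfrak{P}$ gives a non-split exact sequence $0\to\mathbf{Z}/p\to\tilde{J}^{(p)}[\mathfrak{P}]\to\mu_{p}\to 0$ --- with the cuspidal line $\mathbf{Z}/p$ as submodule and the Shimura line $\mu_{p}$ as quotient --- unramified outside $pN$; twisting by $\chi_{D}$ gives $0\to(\mathbf{Z}/p)(\chi_{D})\to\tilde{J}^{(p)}_{D}[\mathfrak{P}]\to\mu_{p}(\chi_{D})\to 0$, whose submodule is the kernel of a rational isogeny of degree $p$ on $\tilde{J}^{(p)}_{D}$. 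Running isogeny descent, the order of $\Sel_{p^{\infty}}(\tilde{J}^{(p)}_{D})_{\mathfrak{P}}$ is read off from the restricted-ramification cohomology of $(\mathbf{Z}/p)(\chi_{D})$ and of $\mu_{p}(\chi_{D})$ --- which, the unit group of $\mathcal{O}_{K}$ being finite of order prime to $p$ since $D<0$ and $p\geq 5$, is controlled by the $p$-part of the class group of $K$ --- together with one ``cuspidal'' contribution. The core of the matter is then to match $\overline{\Lambda}$ with the datum governing whether any class survives into the Selmer group: because of the Eisenstein congruence $f\equiv E_{2}\pmod{\mathfrak{P}}$, the universal special value $\Lambda(U_{\mathfrak{P}},\chi_{D})=\sum_{a\bmod D}\chi_{D}(a)\{\infty,a/D\}$ maps, modulo $\mathfrak{P}$, to a $\chi_{D}$-twisted combination of (images of) cusps of $X_{0}(N)$ in $\tilde{J}^{(p)}_{D}$, and through the connecting homomorphism of the displayed extension this produces a cohomology class; one then checks, using the local computations at $p$, at $N$, and at the primes dividing $D$, together with a global duality (Poitou--Tate, Cassels) count, that $\overline{\Lambda}$ is a $\mathfrak{P}$-unit exactly when $\Sel_{p^{\infty}}(\tilde{J}^{(p)}_{D})_{\mathfrak{P}}=0$.

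\emph{The main obstacle.} The delicate point is this last identification together with the local analysis it rests on. The local condition at the Eisenstein prime $N$ --- where $\tilde{J}^{(p)}_{D}$ has non-split multiplicative reduction, $\chi_{D}(N)=-1$, and the component group has order prime to $p$ --- is the subtlest, and it is here that the hypothesis that $N$ is inert in $K$ intervenes; the hypothesis $p\nmid D$ keeps $\chi_{D}$ unramified at $p$ and makes the component groups at the primes dividing $D$ of order prime to $p$, while $D<0$ places $\Lambda(U_{\mathfrak{P}},\chi_{D})$ in the minus part and trivialises $\mathcal{O}_{K}^{\times}$ modulo $p$. The global duality count that forces $\Sel_{p^{\infty}}(\tilde{J}^{(p)}_{D})_{\mathfrak{P}}$ to vanish entirely --- and not merely its cuspidal part --- when $\overline{\Lambda}$ is a unit is of the same flavour as the implication ``non-vanishing $L$-value $\Rightarrow$ finite Tate--Shafarevich group'' in analytic rank zero. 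Combining Steps 1 and 2 then yields the theorem.
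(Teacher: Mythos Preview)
Your two-step outline matches Mazur's strategy as sketched in the paper, but the execution has real problems. First, a factual error: you assert a \emph{non-split} extension $0\to\Z/p\Z\to\tilde{J}^{(p)}[\mathfrak{P}]\to\mu_p\to 0$, whereas Mazur proved that $\tilde{J}^{(p)}[I]\otimes\Z_p=\Sigma_p\oplus C_p\cong\mu_p\oplus\Z/p\Z$ splits as a $G_{\Q}$-module (see (\ref{eq_Mazur_Shimura_cuspidal})). This does not kill the descent, but it is wrong, and your ``connecting homomorphism'' narrative leans on the non-splitness.

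The genuine gap is in how you connect the two sides. Mazur's argument (and the paper's sketch) passes through the class number $h(K)$ as an explicit intermediary. On the analytic side, there is a concrete isomorphism $H_1(X_0(N),\Z_p)^-/I\xrightarrow{\sim}\Z/p\Z$ (the odd analogue of the map in the proof of Proposition~\ref{prop_valuation_1}), and plugging in $\Lambda(U_{\mathfrak{P}},\chi_D)$ one computes $\overline{\Lambda}$ to be a unit multiple of $B_{1,\chi_D}$, hence of $h(K)$ by the class number formula for imaginary $K$. Separately, the Eisenstein descent shows $\Sel_{\mathfrak{P}}(\tilde{J}^{(p)}_D/\Q)\neq 0$ if and only if $p\mid h(K)$. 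This is exactly Theorem~\ref{Mazur_main_thm_bis}. You never state or use this explicit congruence formula; instead you claim $\Lambda(U_{\mathfrak{P}},\chi_D)$ ``maps to a $\chi_D$-twisted combination of cusps in $\tilde{J}^{(p)}_D$'' and then produces a cohomology class via a connecting map. But the symbols $\{0,a/D\}$ lie in \emph{absolute} homology (the cusps $0$ and $a/D$ are $\Gamma_0(N)$-equivalent since $\gcd(D,N)=1$), not in the abelian variety, so this construction does not parse. Without the explicit computation of $\overline{\Lambda}$ as $h(K)$ modulo $p$, your Step~2 never actually links the $L$-value to the Selmer group, and the ``global duality count'' you invoke at the end is left as a promissory note rather than an argument.
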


We would like to have a statement more closely related to the BSD conjecture for $\tilde{J}^{(p)}_{D}$. One can deduce such a statement only under the following assumption:
\begin{hypo}{H}\label{hypo}
$$\#S_2^{\Eis}(N,p) = \rk_{\Z_p} \mathbb{T}^0_{\mathfrak{P}} \text{ .}$$
\end{hypo}
The right hand side being an important invariant, we follow Mazur let $$g_p := \rk_{\Z_p} \mathbb{T}^0_{\mathfrak{P}} \geq 1 \text{ .}$$
The inequality $\#S_2^{\Eis}(N,p) \geq g_p$ always holds, but the equality occurs rather rarely. Using tables of Naskr\k{e}cki (which are tables extending the ones from \cite{Naskrecki} and kindly shared privately by Naskr\k{e}cki with the first author), one finds that the set of triples $(N,p,g_p)$ with $p\geq 5$, $p \mid\mid N-1$ and $N \leq 14251$ such that Hypothesis \ref{hypo} holds is
$$\{ (11,5,1), (31, 5, 2), (211, 5, 2), (1871, 5, 2), (4621, 5, 2), (9931, 5, 2)\} \text{ .}$$
Thus, in some sense, the simplest example for which our hypothesis holds is $N=11$ and $p=5$, in which case $\tilde{J}^{(p)}$ is the elliptic curve $X_0(11)$. We refer to section \ref{section_L_modSymb} for a reformulation of Hypothesis \ref{hypo}.

 Let  $L(\tilde{J}^{(p)}_{D},s)$ and $\Omega_D>0$ be the $L$-function of $\tilde{J}^{(p)}_{D}$ and its real N\'eron period respectively. One can deduce from Theorem \ref{Mazur_main_thm} the following result toward BSD.

\begin{corr}\label{Mazur_main_cor}
Assume Hypothesis \ref{hypo}. Assume $D<0$, $N$ is inert in $K$ and $p\nmid D$. Then $p$ divides the numerator of $\frac{L(\tilde{J}^{(p)}_{D},1)}{\Omega_D}$ if and only if the Mordell--Weil rank of $\tilde{J}^{(p)}_{D}$ is $>0$ or the $p$-part of the Tate--Shafarevich group of $\tilde{J}^{(p)}_{D}$ is non-trivial.
\end{corr}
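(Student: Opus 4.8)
The plan is to deduce Corollary \ref{Mazur_main_cor} from Theorem \ref{Mazur_main_thm} by translating the membership $\Lambda(U_{\mathfrak{P}},\chi_D)\in I\cdot H_1(X_0(N),\Z)^-_{\mathfrak{P}}$ into a statement about the $p$-adic valuation of the numerator of $L(\tilde{J}^{(p)}_D,1)/\Omega_D$, and separately matching the two Tate--Shafarevich conditions.

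First I would record the relevant algebra. Since $p\geq 5$ and $p^2\nmid N-1$, the ring $\mathbb{T}^0_{\mathfrak{P}}$ is a DVR, and $\mathbb{T}^0/I$ is cyclic of order the numerator of $(N-1)/12$, which is exactly divisible by $p$; hence $\mathbb{T}^0_{\mathfrak{P}}/I\mathbb{T}^0_{\mathfrak{P}}=\mathbb{F}_p$, so $I\cdot\mathbb{T}^0_{\mathfrak{P}}=\mathfrak{P}\cdot\mathbb{T}^0_{\mathfrak{P}}$ is the maximal ideal and $\mathbb{T}^0_{\mathfrak{P}}$ is the valuation ring of a totally ramified extension of $\Q_p$ of degree $g_p$, so that $v_p\circ\Norm_{\mathbb{T}^0_{\mathfrak{P}}/\Z_p}=v_{\mathfrak{P}}$, the normalized valuation of the DVR. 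Moreover complex conjugation fixes the two (real) cusps of $X_0(N)$, so $H_1(X_0(N),\Z)^-=H_1(X_0(N),\cusps;\Z)^-$, and its $\mathfrak{P}$-adic completion is free of rank one over $\mathbb{T}^0_{\mathfrak{P}}$; fixing a generator, write $\Lambda(U_{\mathfrak{P}},\chi_D)=\lambda\cdot(\text{generator})$ with $\lambda\in\mathbb{T}^0_{\mathfrak{P}}$. Then $\Lambda(U_{\mathfrak{P}},\chi_D)\in I\cdot H_1(X_0(N),\Z)^-_{\mathfrak{P}}$ if and only if $v_{\mathfrak{P}}(\lambda)\geq 1$.

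The heart of the argument is the period computation, carried out in section \ref{section_L_modSymb}. Under Hypothesis \ref{hypo} one has $g_p=\#S_2^{\Eis}(N,p)$, which forces every $f\in S_2^{\Eis}(N,p)$ to have rational Hecke eigenvalues, so $\tilde{J}^{(p)}$ is isogenous to a product of elliptic curves $E_f$ indexed by $f\in S_2^{\Eis}(N,p)$. Under the decomposition $\mathbb{T}^0_{\mathfrak{P}}\otimes_{\Z_p}\overline{\Q}_p\cong\prod_f\overline{\Q}_p$, the $f$-component of $\lambda$ is the image of the twisted modular symbol in $H_1(E_f,\Z)^-_{\mathfrak{P}}\cong\Z_p$, which by the classical modular-symbol formula for twisted $L$-values equals $L(E_{f,D},1)/\Omega_{E_{f,D}}$ up to a unit at $p$; here one uses $p\nmid D$, $p\geq 5$, that the Manin constants of the $E_f$ are prime to $p$, and that the isogeny $\tilde{J}^{(p)}_D\sim\prod_f E_{f,D}$ has degree prime to $p$. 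Therefore $\Norm_{\mathbb{T}^0_{\mathfrak{P}}/\Z_p}(\lambda)$ equals $\prod_f L(E_{f,D},1)/\Omega_{E_{f,D}}=L(\tilde{J}^{(p)}_D,1)/\Omega_D$ up to a $p$-adic unit, so $v_{\mathfrak{P}}(\lambda)=v_p\big(\Norm(\lambda)\big)=v_p\big(L(\tilde{J}^{(p)}_D,1)/\Omega_D\big)$, and hence $v_{\mathfrak{P}}(\lambda)\geq 1$ if and only if $p$ divides the numerator of $L(\tilde{J}^{(p)}_D,1)/\Omega_D$. I expect this period bookkeeping — keeping the comparison between the N\'eron period $\Omega_D$ and the automorphic periods $p$-integral and a $p$-adic unit — to be the main technical obstacle, and the step where Hypothesis \ref{hypo} is genuinely used.

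Finally I would match the Tate--Shafarevich conditions. Since $\tilde{J}^{(p)}=J_0(N)/\gamma_{\mathfrak{P}}\cdot J_0(N)$ is annihilated by $\gamma_{\mathfrak{P}}$, the group $\mathrm{Sha}(\tilde{J}^{(p)}_D)[p^\infty]$ is a module over $(\mathbb{T}^0/\gamma_{\mathfrak{P}})\otimes\Z_p$, whose $p$-adic completion is $\mathbb{T}^0_{\mathfrak{P}}$; therefore $\mathrm{Sha}(\tilde{J}^{(p)}_D)[p^\infty]$ coincides with its $\mathfrak{P}$-primary part, i.e. with the completion at $\mathfrak{P}$ of $\mathrm{Sha}(\tilde{J}^{(p)}_D)$, so the $p$-part of $\mathrm{Sha}(\tilde{J}^{(p)}_D)$ is non-trivial exactly when the $\mathfrak{P}$-completion is. Combining this with the previous paragraph and the identity $I\cdot H_1(X_0(N),\Z)^-_{\mathfrak{P}}=\mathfrak{P}\cdot H_1(X_0(N),\Z)^-_{\mathfrak{P}}$, Theorem \ref{Mazur_main_thm} gives precisely the stated equivalence.
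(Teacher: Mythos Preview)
Your overall strategy is the same as the paper's: reduce the divisibility of $L(\tilde{J}^{(p)}_D,1)/\Omega_D$ by $p$ to the membership $\Lambda(U_{\mathfrak{P}},\chi_D)\in I\cdot H_1(X_0(N),\Z)^-_{\mathfrak{P}}$ via a norm/product-of-periods computation, then invoke Theorem \ref{Mazur_main_thm}. The matching of the two Tate--Shafarevich conditions (your last paragraph) is correct and is exactly the observation that $(\mathbb{T}^0/\gamma_{\mathfrak{P}})\otimes\Z_p\cong\mathbb{T}^0_{\mathfrak{P}}$ is local, so $p$-part $=$ $\mathfrak{P}$-part.

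There is, however, a genuine error in your interpretation of Hypothesis \ref{hypo}. You write that it ``forces every $f\in S_2^{\Eis}(N,p)$ to have rational Hecke eigenvalues, so $\tilde{J}^{(p)}$ is isogenous to a product of elliptic curves''. This is false: for instance $(N,p)=(31,5)$ satisfies Hypothesis \ref{hypo} with $g_p=2$, and $\tilde{J}^{(p)}$ is a simple abelian surface. What the hypothesis actually says (see the paragraph in section \ref{section_L_modSymb} reformulating it) is that every $f\in S_2^{\Eis}(N,p)$ is a $\Gal(\overline{\Q}_p/\Q_p)$-conjugate of a fixed one, \ie the Hecke field has a \emph{single} prime above $p$. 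Consequently your appeals to ``$H_1(E_f,\Z)^-_{\mathfrak{P}}\cong\Z_p$'', to $L(E_{f,D},1)/\Omega_{E_{f,D}}$, and to an isogeny $\tilde{J}^{(p)}_D\sim\prod_f E_{f,D}$ of degree prime to $p$ do not make sense in general.

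The fix is exactly what the paper does. One always has $L(\tilde{J}^{(p)}_D,1)=\prod_{\sigma\in\Gal(\overline{\Q}/\Q)}L(f^{\sigma},\chi_D,1)$ and, by Propositions \ref{prop_neron_period_twist} and \ref{prop_product_periods} (the latter adapted to $\Omega_f^-$ since $\chi_D$ is odd), $\Omega_D\sim\prod_{\sigma}\Omega_{f^{\sigma}}^-$. Thus $L(\tilde{J}^{(p)}_D,1)/\Omega_D\sim\prod_{\sigma\in\Gal(\overline{\Q}/\Q)}\sigma(\varphi_f(\Theta_D))$. The role of Hypothesis \ref{hypo} is precisely that this product over $\Gal(\overline{\Q}/\Q)$ coincides with the product over $\Gal(\overline{\Q}_p/\Q_p)$, \ie with $\Norm_{\mathbb{T}^0_{\mathfrak{P}}/\Z_p}(\lambda)$ in your notation; then your valuation identity $v_{\mathfrak{P}}(\lambda)=v_p(\Norm(\lambda))$ completes the argument. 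So your conclusion is right, but the justification needs to go through Galois conjugacy of eigenforms rather than a nonexistent elliptic-curve decomposition.
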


\begin{rems}\label{rem_Mazur_main_thm}
\begin{enumerate}
\item\label{rem_Mazur_main_thm_i} 
One can easily check that the above result is as predicted by the BSD conjecture (using the fact that $p$ does not divide the Tamagawa numbers of $\tilde{J}^{(p)}_{D}$ and the latter does not have any rational $p$-torsion). We may consider this result as a kind of modulo $p$ version of the BSD conjecture for $\tilde{J}^{(p)}_{D}$. 
\item\label{rem_Mazur_main_thm_ii} The assumption that $N$ is inert in $K$ is equivalent to the fact that the sign of $L(\tilde{J}^{(p)}_{D},s)$ is $1$.
\end{enumerate}
\end{rems}

The following is our main result.

\begin{thm}\label{main_thm}
Assume $D>0$, $N$ splits in $K$ and $p \nmid D$. Then we have $\Lambda(U_{\mathfrak{P}}, \chi_D) \in I \cdot H_1(X_0(N), \Z)_{\mathfrak{P}}^{+}$. Furthermore, we have $\Lambda(U_{\mathfrak{P}}, \chi_D) \in I^2 \cdot H_1(X_0(N), \Z)_{\mathfrak{P}}^{\pm}$ if and only if the Mordell--Weil rank of $\tilde{J}^{(p)}_{D}$ is $>0$ or the completion at $\mathfrak{P}$ of the Tate--Shafarevich group of $\tilde{J}^{(p)}_{D}$ is non-trivial.
\end{thm}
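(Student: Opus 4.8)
The plan is to follow the architecture of Mazur's proof of Theorem \ref{Mazur_main_thm}, but tracking the extra power of $I$ that appears in the even case. Recall that for odd $D$ the sign of the functional equation is $+1$ (when $N$ is inert), so $L(\tilde J^{(p)}_D,1)$ can already be nonzero at the "first level," i.e. the relevant special value lives in $H_1(X_0(N),\Z)_{\mathfrak P}^{-}$ and the dichotomy is detected by whether it lies in $I\cdot H_1$. In our even case with $N$ split, the parity is reversed: a general feature of the theory of modular symbols twisted by even characters with $\ell$ split forces $\Lambda(U_{\mathfrak P},\chi_D)$ to lie in $I\cdot H_1(X_0(N),\Z)_{\mathfrak P}^{+}$ already, essentially because the twisted winding element picks up an extra vanishing from the Eisenstein congruence. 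So the first assertion of Theorem \ref{main_thm} should be a more-or-less formal computation with the explicit formula for $\Lambda(U_{\mathfrak P},\chi_D)$ in terms of the winding element and Gauss sums, reducing modulo $\mathfrak P$ and using $f\equiv E_2$, together with the fact that $N$ split in $K$ makes the local term at $N$ contribute a factor divisible by $U_N-1\in I$.

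Next I would set up the "second-order" comparison. Write $e_{\chi_D}$ for the twisted winding element in $H_1(X_0(N),\Z)_{\mathfrak P}^{+}$; by the first part it equals $\Lambda(U_{\mathfrak P},\chi_D)$ up to a unit and lies in $I\cdot H_1$. Since $\mathbb T^0_{\mathfrak P}$ is a DVR with uniformizer generating $I$ locally (here is where $p^2\nmid N-1$ is used), the question "$\Lambda\in I^2\cdot H_1$" is the question whether the image of $e_{\chi_D}/\varpi$ in $(H_1/I\cdot H_1)_{\mathfrak P}$ — a one-dimensional $\mathbb F$-space after completing, under Hypothesis-free considerations one must be careful — is zero. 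On the other hand, by the Gross–Zagier/Waldspurger-type formula available in this Eisenstein setting (or rather its algebraic, mod-$p$ shadow: the relation between $L(\tilde J^{(p)}_D,1)/\Omega_D$ and the twisted modular symbol), the image of $e_{\chi_D}/\varpi$ computes the algebraic $L$-value $L(\tilde J^{(p)}_D,1)/\Omega_D$ modulo $\mathfrak P$. Then I would invoke the $\mathfrak P$-adic BSD machinery of Mazur: combining the cohomological description of the $\mathfrak P$-part of $\Sha(\tilde J^{(p)}_D)$ via the Eisenstein descent (the Kummer sequence together with the explicit Eisenstein-quotient structure), one gets that this $L$-value is a $\mathfrak P$-adic unit exactly when the Mordell–Weil rank is $0$ and $\Sha_{\mathfrak P}$ is trivial. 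This is precisely the contrapositive of the stated equivalence.

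Concretely the steps are: (1) recall/derive the explicit formula for $\Lambda(U_{\mathfrak P},\chi_D)$ as a sum of translates of the winding element weighted by $\chi_D$, valid for $D>0$; (2) reduce mod $\mathfrak P$ using $f\equiv E_2\ (\mathrm{mod}\ \mathfrak P)$ and the splitting of $N$ to produce the factor in $I$, giving the first claim; (3) identify $e_{\chi_D}/\varpi\bmod I$ with the algebraic special value $L(\tilde J^{(p)}_D,1)/\Omega_D \bmod \mathfrak P$, using a period comparison between the twist and the original (here $p\nmid D$ guarantees the twisting period ratio is a $p$-unit, and that the twisting does not change the Eisenstein behavior of the Hecke module); (4) run Mazur's descent argument \`a la \cite{Mazur_1979} to show $L(\tilde J^{(p)}_D,1)/\Omega_D$ is a $\mathfrak P$-unit $\iff$ $\rk=0$ and $\Sha_{\mathfrak P}=0$; (5) assemble (2)–(4) into the equivalence.

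The main obstacle I anticipate is step (3)–(4): controlling the $\mathfrak P$-part of the Selmer/Shafarevich group of the \emph{twist} when the sign of the functional equation is $+1$. In Mazur's odd case the analytic rank is forced to be even and one leverages the first-order term; here one is genuinely in the "generic rank $0$" situation and must show that the only way the mod-$\mathfrak P$ $L$-value vanishes is through a non-trivial $\mathfrak P$-torsion class in $\Sha$ or a positive rank — this requires a careful analysis of the Eisenstein quotient's descent map over $K$, in particular that no spurious contribution from the split prime $N$ or from the ramified primes of $D$ enters, which is where the hypotheses $N$ split, $p\nmid D$, and $p^2\nmid N-1$ all get used simultaneously. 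A secondary technical point is making sure the relevant pieces of $H_1(X_0(N),\Z)_{\mathfrak P}^{\pm}$ are free of rank one over $\mathbb T^0_{\mathfrak P}$ so that "$\in I^2\cdot H_1$" is literally "the mod-$\mathfrak P$ reduction of the depth-one part vanishes"; this follows from the DVR property but should be stated carefully.
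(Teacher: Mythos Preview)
Your outline has two genuine gaps, corresponding to the two ``Steps'' the paper identifies as the difficulties beyond Mazur's original argument.

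\textbf{First assertion.} Your mechanism for $\Lambda(U_{\mathfrak P},\chi_D)\in I\cdot H_1^+$ is not correct: it has nothing to do with $N$ being split or with $U_N-1$. The actual reason (Proposition~\ref{prop_valuation_1}) is Mazur's explicit isomorphism $H_1(X_0(N),\Z_p)^+/I\cdot H_1^+ \xrightarrow{\sim} \Z/p\Z$ sending $\{0,b/d\}\mapsto \log(d)$; then $\alpha(\Theta_D)=\sum_a \chi_D(a)\log(D)=0$ simply because $\chi_D$ is nontrivial. This works for any even $\chi_D$ regardless of the splitting of $N$.

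\textbf{Main equivalence.} Your steps (3)--(4) are where the real content lies, and ``run Mazur's descent \`a la \cite{Mazur_1979}'' is not enough. The paper's introduction explains exactly why: in Mazur's imaginary case one matches both the $L$-value mod $p$ and the Selmer group to the \emph{class number} $h(K)$, via a direct Eisenstein congruence (since $L(E_2,\chi_D,1)\sim B_{1,\chi_D}^2\sim h(K)^2$). In the real case $L(\chi_D,0)=0$, so the na\"ive congruence gives only the trivial divisibility you already have in step (2), and you need a \emph{second-order} congruence formula for $\Theta_D$ modulo $I^2$. The paper obtains this via Sharifi's conjecture (Sections~\ref{section_Sharifi}--\ref{section_L_value}): one lifts $\Theta_D$ to level $\Gamma_1(ND)$, applies the map $\varpi_{ND}$ to $K$-theory, and uses the level-compatibility Theorem~\ref{thm_LW_compatibility} to compute the image in $\mathcal{K}_N\otimes\Z_p$ modulo $J^2$. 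The answer is that $\Theta_D\in I^2\cdot H_1^+$ iff $u(K)^{h(K)}$ is a $p$th power modulo a prime above $N$ (Proposition~\ref{prop_Norm_K2_unit}). Your proposal has no analogue of this computation; the phrase ``Gross--Zagier/Waldspurger-type formula'' is not what is used (the paper mentions Popa's formula only as an alternative route, not pursued here).

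On the arithmetic side, your step (4) also underestimates the work. Over a real quadratic $K$ with $N$ split, the $N$-unit group $\mathcal{O}_K[1/N]^\times\otimes\Z/p\Z$ has rank $3$ (fundamental unit plus two primes above $N$), so the Selmer computation is substantially more delicate than in Mazur's case. The paper needs the $N$-adic uniformization of $J_0(N)$ and de Shalit's results (Section~\ref{section_uniformization}) to pin down the local Kummer image at $\mathfrak N_i$, then a case-by-case analysis (Propositions~\ref{prop_case_A}, \ref{prop_case_B}, and their $g_p>1$ analogues) to show $\mathrm{rk}_{\mathbf F_p}\Sel_I>1$ iff $u(K)^{h(K)}$ is a $p$th power mod $\mathfrak N_i$ --- the \emph{same} invariant as on the $L$-value side. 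Matching these two computations is the proof; there is no shortcut through an abstract ``$L$-value is a unit iff rank $=0$ and $\Sha_{\mathfrak P}=0$'' principle.
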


As before, one can deduce the following result toward BSD. 

\begin{corr}\label{main_cor}
Assume Hypothesis \ref{hypo}. Assume $D>0$, $N$ splits in $K$ and $p \nmid D$. Then $p$ divides the numerator of $\frac{L(\tilde{J}^{(p)}_{D},1)}{\Omega_D}$. Furthermore, $p^2$ divides the numerator of $\frac{L(\tilde{J}^{(p)}_{D},1)}{\Omega_D}$ if and only if the Mordell--Weil rank of $\tilde{J}^{(p)}_{D}$ is $>0$ or the $p$-part of the Tate--Shafarevich group of $\tilde{J}^{(p)}_{D}$ is non-trivial.
\end{corr}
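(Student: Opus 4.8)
The plan is to deduce Corollary~\ref{main_cor} from Theorem~\ref{main_thm} along the lines by which Mazur derives Corollary~\ref{Mazur_main_cor} from Theorem~\ref{Mazur_main_thm}, the only genuinely new point being the translation of the condition ``$\Lambda(U_{\mathfrak{P}},\chi_D)\in I^2\cdot H_1(X_0(N),\Z)^{\pm}_{\mathfrak{P}}$'' into divisibility by $p^2$.

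First I would record the relevant ring theory. Since $p^2\nmid N-1$, the ring $\mathbb{T}^0_{\mathfrak{P}}$ is a DVR (\cite[Prop.~II.19.1]{Mazur_Eisenstein}), with residue field $\mathbb{F}_p$ because $\mathbb{T}^0/I$ is finite cyclic of order divisible by $p$; hence $\mathbb{T}^0_{\mathfrak{P}}$ is totally ramified of degree $g_p=\rk_{\Z_p}\mathbb{T}^0_{\mathfrak{P}}$ over $\Z_p$, a uniformizer generates $I\cdot\mathbb{T}^0_{\mathfrak{P}}$, and, writing $v_{\mathfrak{P}}$ for the normalized valuation, one has $v_p\bigl(\Norm_{\mathbb{T}^0_{\mathfrak{P}}/\Z_p}(x)\bigr)=v_{\mathfrak{P}}(x)$ for all $x$ (residue degree one). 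By multiplicity one for newforms together with the DVR property, $H_1(X_0(N),\Z)^{+}_{\mathfrak{P}}$ is free of rank one over $\mathbb{T}^0_{\mathfrak{P}}$; after fixing a generator and transporting $v_{\mathfrak{P}}$, membership of $\Lambda(U_{\mathfrak{P}},\chi_D)$ in $I^n\cdot H_1(X_0(N),\Z)^{+}_{\mathfrak{P}}$ is equivalent to $v_{\mathfrak{P}}\bigl(\Lambda(U_{\mathfrak{P}},\chi_D)\bigr)\geq n$ (here $\chi_D(-1)=+1$, since $D>0$).

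Next I would identify this valuation arithmetically. By the interpolation property of the universal special value recalled in Section~\ref{section_L_modSymb}, the projection of $\Lambda(U_{\mathfrak{P}},\chi_D)$ onto the component indexed by a pair $(f,\mathfrak{p}_f)$ --- $f\in S_2^{\Eis}(N,p)$ and $\mathfrak{p}_f\mid p$ a prime of $\mathcal{O}_f$ at which $f\equiv E_2$ --- is, up to a $\mathfrak{p}_f$-adic unit and the Gauss sum $\tau(\chi_D)$, the algebraic part of $L(f_D,1)$, where $f_D$ is the quadratic twist of $f$ by $\chi_D$. Under Hypothesis~\ref{hypo} these components exhaust the full $p$-part of $L(\tilde{J}^{(p)}_D,1)$: comparing $\#S_2^{\Eis}(N,p)=\sum_{(f,\mathfrak{p}_f)}[(K_f)_{\mathfrak{p}_f}:\Q_p]$ with $g_p=\rk_{\Z_p}\mathbb{T}^0_{\mathfrak{P}}$ shows that every prime above $p$ of the Hecke field of every relevant newform is a congruence prime (and the DVR property of $\mathbb{T}^0_{\mathfrak{P}}$ forces $\tilde{J}^{(p)}$ to be isogenous to the abelian variety of a single newform orbit). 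Taking norms down to $\Z_p$, I therefore expect
\[
v_p\!\left(\frac{L(\tilde{J}^{(p)}_D,1)}{\Omega_D}\right)=v_p\Bigl(\Norm_{\mathbb{T}^0_{\mathfrak{P}}/\Z_p}\bigl(\Lambda(U_{\mathfrak{P}},\chi_D)\bigr)\Bigr)=v_{\mathfrak{P}}\bigl(\Lambda(U_{\mathfrak{P}},\chi_D)\bigr),
\]
where one uses that $p\geq5$ and $N$ is prime, so the Manin constant of the optimal quotient $\tilde{J}^{(p)}$ is prime to $p$, and that $p\nmid D$, so $\tau(\chi_D)=\sqrt D$ and the archimedean factors relating $\Omega_D$ to the period of $\tilde{J}^{(p)}$ are $p$-adic units; note that $L(\tilde{J}^{(p)}_D,1)/\Omega_D$ is automatically $p$-integral, as $\Lambda(U_{\mathfrak{P}},\chi_D)$ lies in the integral homology. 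Granting this, the first assertion of Theorem~\ref{main_thm} gives $v_{\mathfrak{P}}\bigl(\Lambda(U_{\mathfrak{P}},\chi_D)\bigr)\geq1$, so $p$ divides the numerator of $L(\tilde{J}^{(p)}_D,1)/\Omega_D$; the second assertion gives $v_{\mathfrak{P}}\bigl(\Lambda(U_{\mathfrak{P}},\chi_D)\bigr)\geq2$ if and only if the Mordell--Weil rank of $\tilde{J}^{(p)}_D$ is $>0$ or $\Sh(\tilde{J}^{(p)}_D)_{\mathfrak{P}}\neq0$; and under Hypothesis~\ref{hypo} the ideal $\mathfrak{P}$ is the unique maximal ideal of $\mathbb{T}^0/\gamma_{\mathfrak{P}}$ above $p$ (because $(\mathbb{T}^0/\gamma_{\mathfrak{P}})\otimes\Z_p=\mathbb{T}^0_{\mathfrak{P}}$ is local), so that $\Sh(\tilde{J}^{(p)}_D)_{\mathfrak{P}}$ is precisely the $p$-primary part of $\Sh(\tilde{J}^{(p)}_D)$. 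This yields Corollary~\ref{main_cor}.

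The main obstacle is the bookkeeping in the previous paragraph: one must verify carefully that the Manin constant of $\tilde{J}^{(p)}$, the comparison between $\Omega_D$ and the periods of the abelian varieties of the relevant newforms, and the Gauss-sum normalization of the twisted $L$-value all contribute only $p$-adic units --- which is where $p\geq5$, $N$ prime and $p\nmid D$ get used --- and that Hypothesis~\ref{hypo} really does make the localization at $\mathfrak{P}$ carry the whole $p$-primary part of both the Mordell--Weil group and the Tate--Shafarevich group of $\tilde{J}^{(p)}_D$, with nothing hidden at other primes of the Hecke fields above $p$. Everything else is a formal consequence of Theorem~\ref{main_thm}, the DVR structure of $\mathbb{T}^0_{\mathfrak{P}}$, and the identity $v_p\circ\Norm_{\mathbb{T}^0_{\mathfrak{P}}/\Z_p}=v_{\mathfrak{P}}$.
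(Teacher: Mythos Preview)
Your proposal is correct and follows essentially the same approach as the paper: the paper proves Proposition~\ref{prop_key_equivalence}, which says that under Hypothesis~\ref{hypo} the $p$-adic valuation of $L(\tilde J^{(p)}_D,1)/\Omega_D$ equals the largest $n$ with $\Theta_D\in I^n\cdot H_1(X_0(N),\Z_p)^+$, and then invokes Theorem~\ref{main_thm}; the ingredients you list (the totally ramified DVR structure of $\mathbb T^0_{\mathfrak P}$, freeness of $H_1(X_0(N),\Z)^+_{\mathfrak P}$, the Manin constant being prime to $p$, the period comparison $\Omega_D\sim\Omega_{\tilde J^{(p)}}\sim\prod_f\Omega_f^+$, and the Gauss-sum normalization) are exactly what the paper establishes in Propositions~\ref{prop_neron_period_twist} and~\ref{prop_product_periods} and in Section~\ref{section_L_modSymb}. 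Your observation that Hypothesis~\ref{hypo} is precisely what makes $(\mathbb T^0/\gamma_{\mathfrak P})\otimes\Z_p=\mathbb T^0_{\mathfrak P}$, hence identifies the $\mathfrak P$-completion of $\Sh$ with its full $p$-part, is also correct and is implicit in the paper's reformulation of Hypothesis~\ref{hypo} in Section~\ref{section_L_modSymb}.
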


The assumption that $N$ splits in $K$ is equivalent to the fact that the sign of $L(\tilde{J}^{(p)}_{D},s)$ is $1$. As above, this result is predicted by the BSD conjecture, and may be considered as a modulo $p$ version of the BSD conjecture for $\tilde{J}^{(p)}_{D}$. Our strategy is similar to Mazur's, which we now recall. For simplicity, let us assume $\#S_2^{\Eis}(N,p) = g_p=1$ (in particular, Hypothesis \ref{hypo} is satisfied), the general case being similar. Mazur proceeds in two main steps. 

\begin{itemize}
\item Step 1: congruence formula for the $L$-value. Mazur gives a congruence formula modulo $I$ (the Eisenstein ideal) for $\Lambda(U_{\mathfrak{P}}, \chi_D)$. He uses modular symbols and a multiplicity one result to obtain such a congruence. Let us explain intuitively why one can expect such a congruence formula. Let $f \in S_2(\Gamma_0(N))$ such that $f\equiv E_2 \text{ (modulo }p\text{)}$ as above. We can expect a congruence between the ``algebraic parts'' of $L(f, \chi_D, 1)$ and $L(E_2, \chi_D,1)$ (such congruences have been established in some cases \eg by \cite{Vatsal}).  We have $$L(E_2, \chi_D, 1) = L(\chi_D, 1)\cdot L(\chi_D, 0) \text{ ,}$$ which is proportional to $L(\chi_D, 0)^2 = B_{1, \chi_D}^2$ by the functional equation. By the class number formula, $B_{1, \chi_D}$ is essentially the class number $h(K)$ of $K$ (since $K$ is imaginary in Mazur's result). Thus, one would expect that the algebraic part of $L(f, \chi_D, 1)$ is proportional to $h(K)^2$ modulo $p$. This is what Mazur actually shows using modular symbols in \cite[II \S 7 Proposition]{Mazur_1979}.

\item Step 2: Modulo $p$ Selmer group computation. Mazur performs an Eisenstein descent using flat cohomology. He is able to show in \cite[\S 1 Step 2]{Mazur_1979} that $p$ does not divide $h(K)$ if and only if both the rank of the Mordell--Weil group of $\tilde{J}^{(p)}_D$ and the $p$-part of its Tate--Shafarevich group are zero. Let us again give a heuristic explanation to this result. Since $\tilde{J}^{(p)}_D(\Q)$ does not have any $p$-torsion, this triviality of the rank and the $p$-part of $\Sh$ is equivalent to $\Sel_p(\tilde{J}^{(p)}_D/\Q)=0$, where $\Sel_p$ is the $p$-Selmer group, a subgroup of the Galois cohomology group $H^1(\Gal(\overline{\Q}/\Q), \tilde{J}^{(p)}_D[p])$ (and by restriction a subgroup of $H^1(\Gal(\overline{\Q}/K), \tilde{J}^{(p)}_D[p])$). We then have an isomorphism of $\Gal(\overline{\Q}/K)$-modules $$\tilde{J}^{(p)}_D[p] \simeq  \tilde{J}^{(p)}[p] \simeq \mu_p \oplus \Z/p\Z \text{.}$$ Thus, $\Sel_p(\tilde{J}^{(p)}_D/\Q)$ is a subgroup of $$H^1(\Gal(\overline{\Q}/K), \mu_p) \oplus H^1(\Gal(\overline{\Q}/K), \Z/p\Z) \text{ .}$$ The Selmer conditions actually makes $\Sel_p(\tilde{J}^{(p)}_D/\Q)$ a subgroup of $$\Hom(\Pic(A), \Z/p\Z) \oplus H^1(\Spec(A), \mu_p)$$ where $A = \mathcal{O}_K[1/N]$.  The group $H^1(\Spec(A), \mu_p)$ is an extension of $\Pic(A)[p]$ by $A^{\times} \otimes \Z/p\Z$. Since $A^{\times} \otimes \Z/p\Z$ is generated by $N$ which fixed by $\Gal(K/\Q)$, one may expect that $\Sel_p(\tilde{J}^{(p)}_D/\Q)$ is non-zero if and only if $p$ does not divide $h(K)$.
\end{itemize}
In conclusion, Mazur proves the following result, which implies Theorem \ref{Mazur_main_thm}:

\begin{thm}\cite[Chap. IV, \S 1, Theorem]{Mazur_1979}\label{Mazur_main_thm_bis}
Assume $D<0$, $N$ is inert in $K$ and $p \nmid D$. Then the following assertions are equivalent:
\begin{enumerate}
\item $\Lambda(U_{\mathfrak{P}}, \chi_D) \in I \cdot H_1(X_0(N), \Z)_{\mathfrak{P}}^{-}$.
\item $p$ divides $h(K)$.
\item  $\Sel_p(\tilde{J}^{(p)}_D/\Q)_{\mathfrak{P}} \neq 0$.
\end{enumerate}
\end{thm}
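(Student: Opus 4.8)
The plan is to prove the two equivalences $(1)\Leftrightarrow(2)$ and $(2)\Leftrightarrow(3)$ separately, following the two-step strategy recalled above; I will describe the case $\#S_2^{\Eis}(N,p)=g_p=1$, so that $\tilde{J}^{(p)}$ is an elliptic curve and $\tilde{J}^{(p)}[\mathfrak{P}]=\tilde{J}^{(p)}[p]$, the general case being entirely analogous after replacing $[p]$ by the $\mathfrak{P}$-torsion throughout. \emph{Step 1 ($(1)\Leftrightarrow(2)$: the congruence formula for the $L$-value).} Since $p^2\nmid N-1$, the ring $\mathbb{T}^0_{\mathfrak{P}}$ is a DVR with maximal ideal generated by the image of $I$ and residue field $\mathbb{F}_p$, and by Mazur's multiplicity-one theorem at the Eisenstein prime $H_1(X_0(N),\Z)^{-}_{\mathfrak{P}}$ is free of rank one over $\mathbb{T}^0_{\mathfrak{P}}$; consequently $H_1(X_0(N),\Z)^{-}_{\mathfrak{P}}/I\cdot H_1(X_0(N),\Z)^{-}_{\mathfrak{P}}$ is one-dimensional over $\mathbb{F}_p$, and assertion $(1)$ is equivalent to the vanishing there of the class of $\Lambda(U_{\mathfrak{P}},\chi_D)$. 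I would then compute that class using the Eisenstein congruence (\ref{eq_Eis_congruence}): modulo $I$, the twisted modular symbol defining $\Lambda(U_{\mathfrak{P}},\chi_D)$ may be evaluated against $E_2$ rather than against the cusp forms, and the resulting period ratio is, up to a $p$-adic unit, the algebraic part of $L(E_2,\chi_D,1)=L(\chi_D,1)L(\chi_D,0)$. For $D<0$ the functional equation makes this a unit multiple of $B_{1,\chi_D}^2$, while the analytic class number formula gives $B_{1,\chi_D}=-2h(K)/w_K$; since $p\geq 5$ (hence $p\nmid w_K$) and $p\nmid D$, the class of $\Lambda(U_{\mathfrak{P}},\chi_D)$ is a $p$-adic unit times $h(K)^2\bmod p$, which vanishes if and only if $p\mid h(K)$ --- this is the content of \cite[II \S 7 Proposition]{Mazur_1979}.

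\emph{Step 2 ($(2)\Leftrightarrow(3)$: the Eisenstein descent).} Here I would carry out Mazur's descent by flat cohomology, following \cite[\S 1 Step 2]{Mazur_1979}. Since $p\nmid DN$, the twisting character is trivial on $\Gal(\overline{\Q}/K)$, so $\tilde{J}^{(p)}_D[p]\simeq\tilde{J}^{(p)}[p]$ as $\Gal(\overline{\Q}/K)$-modules, and the latter sits in the (non-split over $\Q$) exact sequence $0\to\Z/p\Z\to\tilde{J}^{(p)}[p]\to\mu_p\to0$, the sub being the image of the cuspidal subgroup. Using that $\tilde{J}^{(p)}_D$ has good reduction away from $N$, that after the twist its Tamagawa numbers and component group at $N$ are prime to $p$, and that $\tilde{J}^{(p)}_D(\Q)$ has no $p$-torsion, I would realize $\Sel_p(\tilde{J}^{(p)}_D/\Q)_{\mathfrak{P}}$, after restriction to $\Gal(\overline{\Q}/K)$ and via flat cohomology over $A=\mathcal{O}_K[1/N]$, as the subgroup of $\Hom(\Pic(A),\Z/p\Z)\oplus H^1(\Spec(A),\mu_p)$ cut out by the local conditions, remembering the $\Gal(K/\Q)$-eigenspace attached to $\chi_D$. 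Now $H^1(\Spec(A),\mu_p)$ is an extension of $\Pic(A)[p]$ by $A^{\times}\otimes\Z/p\Z$; since $N$ is inert in $K$ the unique prime above $N$ is principal, so $\Pic(A)=\Pic(\mathcal{O}_K)$, of order $h(K)$, while $A^{\times}\otimes\Z/p\Z$ is generated by $N$, which is $\Gal(K/\Q)$-invariant and hence contributes only to the eigenspace complementary to the $\chi_D$-one. It follows that $\Sel_p(\tilde{J}^{(p)}_D/\Q)_{\mathfrak{P}}$ is governed by $\Pic(\mathcal{O}_K)[p]$ together with $\Pic(\mathcal{O}_K)/p\Pic(\mathcal{O}_K)$, hence is non-zero if and only if $p\mid h(K)$. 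Combining Steps 1 and 2 gives $(1)\Leftrightarrow(2)\Leftrightarrow(3)$; and since $\tilde{J}^{(p)}_D(\Q)[p]=0$, the vanishing of $\Sel_p(\tilde{J}^{(p)}_D/\Q)_{\mathfrak{P}}$ is equivalent to the Mordell--Weil rank of $\tilde{J}^{(p)}_D$ being zero together with the triviality of $\Sh(\tilde{J}^{(p)}_D/\Q)_{\mathfrak{P}}$, so Theorem \ref{Mazur_main_thm} also follows.

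\emph{Main obstacle.} I expect Step 2 to be the delicate one. The awkward points are, first, setting up the descent in flat cohomology correctly at $\ell=p$ --- where $\mu_p$ and $\Z/p\Z$ must genuinely be distinguished and one works with finite flat group schemes --- and at $\ell=N$, where the local Selmer condition must be matched with the component group of the Néron model of $\tilde{J}^{(p)}_D$, which requires understanding the fibre at $N$ and the effect of the quadratic twist; and second, the eigenspace bookkeeping: one must check that the unit part $A^{\times}\otimes\Z/p\Z=\langle N\rangle$ of $H^1(\Spec(A),\mu_p)$ really lies in the eigenspace complementary to the one attached to $\chi_D$ (so that it drops out), and that the non-splitness over $\Q$ of $0\to\Z/p\Z\to\tilde{J}^{(p)}[p]\to\mu_p\to0$ does not interfere via the associated connecting map. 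In Step 1 the only substantive inputs beyond formal manipulation are Mazur's multiplicity one (the freeness of $H_1(X_0(N),\Z)^{-}_{\mathfrak{P}}$ over $\mathbb{T}^0_{\mathfrak{P}}$) and the precise identification of the mod-$I$ reduction of $\Lambda(U_{\mathfrak{P}},\chi_D)$ with $B_{1,\chi_D}^2$.
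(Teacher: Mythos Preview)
Your proposal is correct and follows essentially the same two-step approach that the paper attributes to Mazur (the theorem is cited from \cite{Mazur_1979} and only sketched in the introduction here): Step~1 via multiplicity one and the modular-symbol congruence reducing $\Lambda(U_{\mathfrak{P}},\chi_D)$ modulo $I$ to $B_{1,\chi_D}^2\sim h(K)^2$, and Step~2 via Eisenstein descent in flat cohomology over $A=\mathcal{O}_K[1/N]$. One small correction: the extension $0\to\Z/p\Z\to\tilde{J}^{(p)}[p]\to\mu_p\to 0$ is in fact \emph{split} over $\Q$ in this prime-level setting --- Mazur shows $\tilde{J}^{(p)}[I]\otimes\Z_p=\Sigma_p\oplus C_p\simeq\mu_p\oplus\Z/p\Z$ as $G_{\Q}$-modules (the Shimura and cuspidal subgroups) --- which only simplifies your Step~2, since the connecting map you worried about is then zero.
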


One can deduce the following result under Hypothesis \ref{hypo}.
\begin{corr}\label{Mazur_main_cor_bis}
Assume Hypothesis \ref{hypo}.  Assume $D<0$, $N$ is inert in $K$ and $p \nmid D$. Then the following assertions are equivalent:
\begin{enumerate}
\item $p$ divides the numerator of $\frac{L(\tilde{J}^{(p)}_{D},1)}{\Omega_D}$.
\item $p$ divides $h(K)$.
\item  $\Sel_p(\tilde{J}^{(p)}_D/\Q) \neq 0$.
\end{enumerate}
\end{corr}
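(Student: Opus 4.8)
The plan is to deduce Corollary~\ref{Mazur_main_cor_bis} from Theorem~\ref{Mazur_main_thm_bis}. Since assertion~(2) of that theorem is literally assertion~(ii) of the corollary, it suffices to prove, under Hypothesis~\ref{hypo}, the two equivalences $(1)\Leftrightarrow(i)$ and $(3)\Leftrightarrow(iii)$.

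I would dispatch the Selmer equivalence first; it is soft and uses neither $D<0$ nor Hypothesis~\ref{hypo}. Tensoring the tautological exact sequence $0\to\gamma_{\mathfrak P}\to\mathbb T^0\to\mathbb T^0/\gamma_{\mathfrak P}\to 0$ with $\Z_p$ and using the decomposition $\mathbb T^0\otimes\Z_p=\prod_{\mathfrak Q\mid p}\mathbb T^0_{\mathfrak Q}$ together with $\gamma_{\mathfrak P}\otimes\Z_p=\prod_{\mathfrak Q\neq\mathfrak P}\mathbb T^0_{\mathfrak Q}$ (immediate from \eqref{eq_gamma_P}), one obtains a canonical isomorphism $(\mathbb T^0/\gamma_{\mathfrak P})\otimes\Z_p\cong\mathbb T^0_{\mathfrak P}$. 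As $\Sel_p(\tilde J^{(p)}_D/\Q)$ is a $p$-torsion module over $\mathbb T^0/\gamma_{\mathfrak P}$, it is therefore canonically equal to its own $\mathfrak P$-completion, which gives $(3)\Leftrightarrow(iii)$.

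For $(1)\Leftrightarrow(i)$ I would set up the following dictionary. Because $p\mid\mid N-1$ and $p\geq5$, the cyclic group $\mathbb T^0/I$ has order exactly divisible by $p$, so $I\cdot\mathbb T^0_{\mathfrak P}$ is the maximal ideal $\mathfrak m$ of the DVR $\mathbb T^0_{\mathfrak P}$; since its residue field is $\mathbb F_p$, this DVR is totally ramified of degree $g_p$ over $\Z_p$, whence $v_p\circ\Norm_{\mathbb T^0_{\mathfrak P}/\Z_p}$ coincides on $\mathbb T^0_{\mathfrak P}$ with the normalized valuation. Now $H_1(X_0(N),\Z)^-_{\mathfrak P}$ is $\Z_p$-torsion-free of generic rank one over $\mathbb T^0_{\mathfrak P}$, hence free of rank one over this DVR; and under Hypothesis~\ref{hypo} one has $\dim\tilde J^{(p)}=\#S_2^{\Eis}(N,p)=g_p=\rk_{\Z_p}\mathbb T^0_{\mathfrak P}$, so $H_1(\tilde J^{(p)},\Z)^-_{\mathfrak P}$ is again a $\mathbb T^0_{\mathfrak P}$-module which is $\Z_p$-torsion-free of $\Z_p$-rank $g_p$, hence also free of rank one. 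As $\tilde J^{(p)}$ is the quotient of $J_0(N)$ by the abelian subvariety $\gamma_{\mathfrak P}\cdot J_0(N)$, the Hecke-equivariant map $\pi_*\colon H_1(X_0(N),\Z)^-_{\mathfrak P}\to H_1(\tilde J^{(p)},\Z)^-_{\mathfrak P}$ is surjective, hence a $\mathbb T^0_{\mathfrak P}$-linear isomorphism of free rank-one modules. Writing $\Lambda(U_{\mathfrak P},\chi_D)=\lambda\cdot e$ with $e$ a generator of $H_1(X_0(N),\Z)^-_{\mathfrak P}$ and $\lambda\in\mathbb T^0_{\mathfrak P}$, assertion~(1) reads $\lambda\in\mathfrak m$.

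It then remains to identify the numerator of $L(\tilde J^{(p)}_D,1)/\Omega_D$, up to a $p$-adic unit, with $\Norm_{\mathbb T^0_{\mathfrak P}/\Z_p}(\lambda)$, i.e. with $\det_{\Z_p}$ of multiplication by $\lambda$ on $H_1(\tilde J^{(p)},\Z)^-_{\mathfrak P}\cong H_1(X_0(N),\Z)^-_{\mathfrak P}$; granting this, $(i)$ reads $v_p(\Norm(\lambda))\geq1$, i.e. $v(\lambda)\geq1$, i.e. $\lambda\in\mathfrak m$, so $(1)\Leftrightarrow(i)$ and the corollary follows from Theorem~\ref{Mazur_main_thm_bis}. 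The claimed identity combines Shimura's rationality theorem for $L(\tilde J^{(p)}_D,1)/\Omega_D$ (with the observation, recalled in Remark~\ref{rem_Mazur_main_thm}(\ref{rem_Mazur_main_thm_i}), that $\tilde J^{(p)}_D$ has no rational $p$-torsion and $p$ does not divide its Tamagawa numbers, so $p$ cannot appear in the denominator and the comparison of periods introduces only $p$-adic units at the level of N\'eron models and of the quadratic twist) with the comparison, via $\pi$, between $\Omega_D$ and the ``modular'' period attached to $H_1(\tilde J^{(p)},\Z)^-_{\mathfrak P}$ through the twisted period pairing encoded by $\Lambda(U_{\mathfrak P},\chi_D)$. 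The main obstacle, and the only place where Hypothesis~\ref{hypo} is genuinely used beyond the rank bookkeeping above, is showing that this last comparison --- essentially a Manin-constant/congruence-number factor for $\tilde J^{(p)}$ at $\mathfrak P$ --- is a $p$-adic unit; this is exactly the reformulation of Hypothesis~\ref{hypo} announced in Section~\ref{section_L_modSymb}, and I expect it to follow from $\mathbb T^0_{\mathfrak P}$ being a DVR together with Mazur's analysis of the Eisenstein ideal, in particular the multiplicity-one property of $\tilde J^{(p)}[\mathfrak P]$.
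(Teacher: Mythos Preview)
Your overall strategy and the Selmer equivalence $(3)\Leftrightarrow(\text{iii})$ are correct and match the paper's (implicit) argument: since $\tilde J^{(p)}_D$ is already a $\mathbb T^0/\gamma_{\mathfrak P}$-module and $(\mathbb T^0/\gamma_{\mathfrak P})\otimes\Z_p\cong\mathbb T^0_{\mathfrak P}$, the $p$-Selmer group coincides with its own $\mathfrak P$-completion.

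Your treatment of $(1)\Leftrightarrow(\text{i})$, however, has a gap and a misconception. The paper's route (the minus-sign analogue of Proposition~\ref{prop_key_equivalence}) is: factor $L(\tilde J^{(p)}_D,s)=\prod_{\sigma\in\Gal(\overline{\Q}/\Q)}L(f^{\sigma},\chi_D,s)$, apply Birch's formula \eqref{Birch_formula_bis} to each factor, and use Propositions~\ref{prop_neron_period_twist} and~\ref{prop_product_periods} to compare $\Omega_D$ with $\prod\Omega_{f^\sigma}^-$. The Manin constant being a $p$-unit is an \emph{unconditional} input here (it is \cite[Theorem~3.5]{Manin_constant}, used inside Proposition~\ref{prop_product_periods}); it is \emph{not} what Hypothesis~\ref{hypo} buys you. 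The actual role of Hypothesis~\ref{hypo}, as reformulated in Section~\ref{section_L_modSymb}, is that every $\Gal(\overline{\Q}/\Q)$-conjugate of $f$ is already a $\Gal(\overline{\Q}_p/\Q_p)$-conjugate. This is precisely what converts the global product $\prod_{\sigma\in\Gal(\overline{\Q}/\Q)}\varphi_{f^\sigma}(\Theta_D)$ into $\Norm_{\mathbb T^0_{\mathfrak P}/\Z_p}(\lambda)$; without it the $L$-function of $\tilde J^{(p)}_D$ involves more factors than the norm from $\mathbb T^0_{\mathfrak P}$ can account for. Your final paragraph gestures at a congruence-number obstruction and says you ``expect'' it to vanish by multiplicity one, but that is the wrong obstacle: once you use the $L$-function factorization and Birch's formula, nothing further is needed beyond the Galois-conjugacy reformulation of Hypothesis~\ref{hypo}. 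I would rewrite the argument along those lines rather than routing through $H_1(\tilde J^{(p)},\Z)^-$ and Shimura's rationality theorem.
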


Similarly, we prove the following strengthening of Theorem \ref{main_thm}:

\begin{thm}\label{main_thm_bis}
Assume $D>0$, $N$ splits in $K$ and $p \nmid D$. Let $u(K)$ be a fundamental unit in $\mathcal{O}_K^{\times}$. Then we have $\Lambda(U_{\mathfrak{P}}, \chi_D) \in I \cdot H_1(X_0(N), \Z)_{\mathfrak{P}}^{+}$, and the following assertions are equivalent:
\begin{enumerate}
\item\label{main_thm_bis_1} $\Lambda(U_{\mathfrak{P}}, \chi_D) \in I^2 \cdot H_1(X_0(N), \Z)_{\mathfrak{P}}^{+}$.
\item\label{main_thm_bis_2} $u(K)^{h(K)}$ is a $p$th power modulo any prime dividing $N$ in $K$.
\item\label{main_thm_bis_3}  $\Sel_p(\tilde{J}^{(p)}_D/\Q)_{\mathfrak{P}} \neq 0$.
\end{enumerate}
\end{thm}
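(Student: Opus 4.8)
\emph{Overview.} The plan is to carry out, over the real quadratic field $K$, the two-step strategy recalled after Theorem~\ref{main_thm}: a congruence formula for $\Lambda(U_{\mathfrak{P}},\chi_D)$ in terms of $h(K)$ and $u(K)$ (Step~1), and an Eisenstein descent computing $\Sel_p(\tilde{J}^{(p)}_D/\Q)_{\mathfrak{P}}$ (Step~2). Throughout we use that, since $p^2\nmid N-1$, the ring $\mathbb{T}^0_{\mathfrak{P}}$ is a DVR with maximal ideal $I\cdot\mathbb{T}^0_{\mathfrak{P}}$ (\cite[Proposition II.19.1]{Mazur_Eisenstein}), and that $H_1(X_0(N),\Z)^{+}_{\mathfrak{P}}$ is free of rank one over $\mathbb{T}^0_{\mathfrak{P}}$ (a consequence of Mazur's multiplicity-one result, using $\chi_D(-1)=1$). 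Consequently each graded piece $I^{k}\cdot H_1(X_0(N),\Z)^{+}_{\mathfrak{P}}/I^{k+1}\cdot H_1(X_0(N),\Z)^{+}_{\mathfrak{P}}$ is one-dimensional over $\mathbb{F}_p$, and an assertion ``$\Lambda(U_{\mathfrak{P}},\chi_D)\in I^{k}\cdot H_1(X_0(N),\Z)^{+}_{\mathfrak{P}}$'' merely records a lower bound on the $I$-adic valuation of $\Lambda(U_{\mathfrak{P}},\chi_D)$.

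\emph{Step 1 (congruence formula).} First I would rewrite $\Lambda(U_{\mathfrak{P}},\chi_D)$ as a $\chi_D$-twisted modular symbol and reduce it modulo $I$ using the congruence $f\equiv E_2$ together with multiplicity one. Since $\chi_D$ is even, the corresponding Eisenstein special value $L(E_2,\chi_D,1)=L(\chi_D,1)\cdot L(\chi_D,0)$ vanishes because of the trivial zero $L(\chi_D,0)=B_{1,\chi_D}=0$; this already gives $\Lambda(U_{\mathfrak{P}},\chi_D)\in I\cdot H_1(X_0(N),\Z)^{+}_{\mathfrak{P}}$, the first assertion of the theorem. The real content is to compute the image of $\Lambda(U_{\mathfrak{P}},\chi_D)$ in $I\cdot H_1(X_0(N),\Z)^{+}_{\mathfrak{P}}/I^{2}\cdot H_1(X_0(N),\Z)^{+}_{\mathfrak{P}}\cong\mathbb{F}_p$, i.e. the second-order term of its $I$-adic expansion. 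I would extract this term by combining an explicit evaluation of the twisted modular symbol with the theory of second-order (higher) Eisenstein elements in $H_1(X_0(N),\Z)_{\mathfrak{P}}$. Guided by the Stark-type identity $L'(\chi_D,0)=h(K)\log u(K)$, the combinatorics of the $q$-expansion of $E_2$ (Gauss and Jacobi sums modulo $N$) should turn this derivative into its algebraic mod $p$ avatar: the class of $u(K)^{h(K)}$ in $(\mathcal{O}_K/\mathfrak{N})^{\times}\otimes\Z/p\Z\cong\mathbb{F}_N^{\times}/(\mathbb{F}_N^{\times})^{p}$, for $\mathfrak{N}\mid N$ in $K$. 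This yields the equivalence (\ref{main_thm_bis_1})~$\Leftrightarrow$~(\ref{main_thm_bis_2}).

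\emph{Step 2 (Selmer group).} For (\ref{main_thm_bis_2})~$\Leftrightarrow$~(\ref{main_thm_bis_3}) I would run Mazur's Eisenstein descent, via flat cohomology, over $K$. By Mazur's description of the Eisenstein ideal and of the Shimura subgroup, $\tilde{J}^{(p)}[\mathfrak{P}]$ sits in an exact sequence $0\to\mu_p\to\tilde{J}^{(p)}[\mathfrak{P}]\to\Z/p\Z\to0$ of $\Gal(\overline{\Q}/\Q)$-modules; twisting by $\chi_D$ and restricting to $\Gal(\overline{\Q}/K)$, where the sequence splits, identifies $\Sel_p(\tilde{J}^{(p)}_D/\Q)_{\mathfrak{P}}$ with the $(-1)$-eigenspace, for the conjugation action of $\Gal(K/\Q)$, of a Selmer-type subgroup of $H^1(\Spec\mathcal{O}_K[1/N],\mu_p)\oplus H^1(\Spec\mathcal{O}_K[1/N],\Z/p\Z)$ cut out by the flat condition at $p$ (good reduction, as $p\nmid 2ND$) and the multiplicative (Steinberg) condition at the primes above $N$. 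Since complex conjugation acts by $-1$ on $\Pic(\mathcal{O}_K)$, one has $\Pic(\mathcal{O}_K[1/N])[p]^{-}=\Pic(\mathcal{O}_K)[p]$, while $(\mathcal{O}_K[1/N]^{\times}\otimes\Z/p\Z)^{-}$ is generated by $u(K)$ and by $\pi-\overline{\pi}$, where $\mathfrak{N}^{h(K)}=(\pi)$. A Poitou--Tate count on these eigenspaces, tracking the local condition at $N$, should then show that the Selmer group is non-trivial precisely when either $p\mid h(K)$ (in which case $u(K)^{h(K)}$ is automatically a $p$-th power modulo $\mathfrak{N}$, the $\Pic(\mathcal{O}_K)[p]$-contribution being the relevant one) or $u(K)$ is a $p$-th power modulo $\mathfrak{N}$ --- that is, exactly when (\ref{main_thm_bis_2}) holds.

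\emph{Conclusion and main difficulty.} Combining Steps~1 and~2 proves Theorem~\ref{main_thm_bis}, and Theorem~\ref{main_thm} then follows since $\tilde{J}^{(p)}_D(\Q)$ has no $p$-torsion (the nontrivial twist by $\chi_D$ removes both the $\mu_p$ sub and the $\Z/p\Z$ quotient of $\tilde{J}^{(p)}[\mathfrak{P}]$ over $\Q$), so that $\Sel_p(\tilde{J}^{(p)}_D/\Q)_{\mathfrak{P}}\neq0$ is equivalent to ``$\tilde{J}^{(p)}_D$ has positive Mordell--Weil rank, or its Tate--Shafarevich group has non-trivial $\mathfrak{P}$-completion''. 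I expect Step~1 to be the main obstacle: the first-order vanishing of the Eisenstein value is automatic, but isolating the second-order term and, above all, recognizing it as the honestly algebraic mod $p$ condition ``$u(K)^{h(K)}$ is a $p$-th power modulo $N$'' --- rather than an a priori transcendental expression in $\log u(K)$ --- is where the essential work lies, and it is precisely what the higher Eisenstein element formalism and a careful Gauss-sum computation are meant to supply.
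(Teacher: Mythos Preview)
Your two-step strategy matches the paper's, and your treatment of the first assertion (the trivial $I$-divisibility, via the vanishing of the Eisenstein special value) is correct. Step~2 is along the right lines, though you underestimate the work at $N$: the paper does not simply invoke a ``multiplicative (Steinberg) condition'' but uses de~Shalit's explicit description of the $N$-adic uniformization of $J_0(N)$ to show that $(\tilde{J}^{(p)}(\Q_N)/I\cdot\tilde{J}^{(p)}(\Q_N))\otimes\Z_p\simeq(\Z/p\Z)^2$, then writes down the Galois representation on $\tilde{J}^{(p)}[I^2]$ modulo $I^2$ and computes the image of the local Kummer map as an explicit two-dimensional subspace of $H^1(G_{K_{\mathfrak{N}_i}},\Z/p\Z)^2$. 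The Selmer computation then becomes a concrete linear-algebra problem (with separate treatment of the cases $g_p=1$ and $g_p>1$), rather than a Poitou--Tate count.

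The genuine gap is in Step~1. Your proposal to extract the second-order term via ``higher Eisenstein elements'' and ``$q$-expansion combinatorics'' does not supply the missing mechanism. The obstruction is structural: $\Theta_D=\sum_a\chi_D(a)\{0,a/D\}$ is \emph{not} a linear combination of Manin symbols at level $\Gamma_0(N)$ or $\Gamma_1(N)$, so the known congruence formulas modulo $I^2$ (which are stated in terms of Manin symbols) do not apply directly to it; the paper says explicitly that the formula at level $\Gamma_1(N)$ ``is not enough for our purposes''. The paper's actual route is $K$-theoretic: (a) identify $I\cdot H_1(X_0(N),\Z_p)^+/I^2$ with $J\cdot(\mathcal{K}_N\otimes\Z_p)/J^2$ via Sharifi's map $\varpi_N$ into $K_2(\Z[\zeta_N,1/N])$; (b) lift $\Theta_D$ to level $\Gamma_1(ND)$, where the symbols $\{0,a/D\}$ \emph{are} Manin symbols, and use a level-compatibility theorem for the maps $\varpi_M$ to express $\varpi_N(\tilde\Theta_D)$ as the norm from $\mathcal{K}_{ND}$ of the explicit Steinberg symbol $\langle\prod_a(1-\zeta_D^a)^{\chi_D(a)},\,1-\zeta_{ND}\rangle$; (c) evaluate that norm via Hilbert~90 and the tame residue map on $K_2$, which produces exactly the condition ``$u(K)^{h(K)}$ is a $p$-th power modulo $\mathfrak{N}$''. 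Your Stark-type heuristic $L'(\chi_D,0)=h(K)\log u(K)$ is the right motivation, but the bridge from modular symbols to the arithmetic of $K$ goes through $K_2$ and cyclotomic units, not through Gauss sums or $q$-expansions.
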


One can deduce the following result under Hypothesis \ref{hypo}.
\begin{corr}\label{main_cor_bis}
Assume Hypothesis \ref{hypo}. Assume $D>0$, $N$ splits in $K$ and $p \nmid D$. Let $u(K)$ be a fundamental unit in $\mathcal{O}_K^{\times}$. Then $p$ divides the numerator of $\frac{L(\tilde{J}^{(p)}_{D},1)}{\Omega_D}$, and the following assertions are equivalent:
\begin{enumerate}
\item\label{main_cor_bis_1} $p^2$ divides the numerator of $\frac{L(\tilde{J}^{(p)}_{D},1)}{\Omega_D}$.
\item\label{main_cor_bis_2} $u(K)^{h(K)}$ is a $p$th power modulo any prime dividing $N$ in $K$.
\item\label{main_cor_bis_3}  $\Sel_p(\tilde{J}^{(p)}_D/\Q)\neq 0$.
\end{enumerate}
\end{corr}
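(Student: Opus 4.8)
The plan is to deduce Corollary \ref{main_cor_bis} from Theorem \ref{main_thm_bis}, just as (one expects) Corollary \ref{Mazur_main_cor_bis} is deduced from Theorem \ref{Mazur_main_thm_bis}, the only new feature being that divisibility by $I$ is refined to divisibility by $I^{2}$. Throughout I would assume Hypothesis \ref{hypo} and first record the relevant structure. Since $p^{2}\nmid N-1$, the ring $\mathbb{T}^{0}_{\mathfrak{P}}$ is a DVR (\cite{Mazur_Eisenstein}), and its residue field is $\mathbb{F}_{p}$ because $\mathfrak{P}=(I,p)$ and $\mathbb{T}^{0}/I$ is cyclic; hence $I\cdot\mathbb{T}^{0}_{\mathfrak{P}}$ is its maximal ideal, generated by a uniformizer $\varpi$, so $I^{n}\cdot\mathbb{T}^{0}_{\mathfrak{P}}=(\varpi^{n})$ for every $n\geq 0$. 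Under Hypothesis \ref{hypo} one has $[E:\Q]=\#S_{2}^{\Eis}(N,p)=g_{p}=[E_{\mathfrak{p}}:\Q_{p}]$, where $E$ is the Hecke field of the Eisenstein eigenform orbit (unique because $\mathbb{T}^{0}_{\mathfrak{P}}$ is a domain) and $\mathfrak{p}$ is the place of $E$ above $p$ attached to $\mathfrak{P}$; consequently $\mathfrak{p}$ is the \emph{only} place of $E$ above $p$, $\mathbb{T}^{0}_{\mathfrak{P}}=\mathcal{O}_{E}\otimes\Z_{p}$ is totally ramified of degree $g_{p}$ over $\Z_{p}$, and $\tilde{J}^{(p)}$ is the optimal quotient of $J_{0}(N)$ attached to this orbit, of dimension $[E:\Q]$. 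Finally $H_{1}(X_{0}(N),\Z)^{+}_{\mathfrak{P}}$ is free of rank one over the DVR $\mathbb{T}^{0}_{\mathfrak{P}}$ (by the Gorenstein property of $\mathbb{T}^{0}_{\mathfrak{P}}$, equivalently multiplicity one; see section \ref{section_L_modSymb}).

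Having fixed a generator $e$ of $H_{1}(X_{0}(N),\Z)^{+}_{\mathfrak{P}}$, I would write $\Lambda(U_{\mathfrak{P}},\chi_{D})=\mathcal{L}_{D}\cdot e$ with $\mathcal{L}_{D}\in\mathbb{T}^{0}_{\mathfrak{P}}$ (well defined up to a unit), so that
\[
\Lambda(U_{\mathfrak{P}},\chi_{D})\in I^{n}\cdot H_{1}(X_{0}(N),\Z)^{+}_{\mathfrak{P}}\ \Longleftrightarrow\ v_{\mathfrak{p}}(\mathcal{L}_{D})\geq n .
\]
Next I would connect $\mathcal{L}_{D}$ to the BSD quotient. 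Pairing $\Lambda(U_{\mathfrak{P}},\chi_{D})$ with the differentials $\omega_{f^{\sigma}}$ attached to the $g_{p}$ embeddings $\sigma\colon E\hookrightarrow\C$ and using Birch's formula (which is the defining property of $\Lambda(U_{\mathfrak{P}},\chi_{D})$, cf.\ section \ref{section_L_modSymb}) identifies $\sigma(\mathcal{L}_{D})$, for a suitable choice of periods, with a $\mathfrak{p}$-adic unit times the algebraic number $L(f^{\sigma},\chi_{D},1)/\Omega^{+}_{f^{\sigma}}$. Taking the product over $\sigma$ and using $L(\tilde{J}^{(p)}_{D},s)=\prod_{\sigma}L(f^{\sigma},\chi_{D},s)$ together with the fact that $\Omega_{D}$ differs from $\prod_{\sigma}\Omega^{+}_{f^{\sigma}}$ by a rational number prime to $p$ (a Manin-constant comparison for the optimal quotient $\tilde{J}^{(p)}$ and for its quadratic twist, available since $p\nmid D$), one gets that $L(\tilde{J}^{(p)}_{D},1)/\Omega_{D}$ is $p$-integral and that
\[
v_{p}\!\left(\frac{L(\tilde{J}^{(p)}_{D},1)}{\Omega_{D}}\right)=v_{p}\bigl(\Norm_{E/\Q}(\mathcal{L}_{D})\bigr)=v_{\mathfrak{p}}(\mathcal{L}_{D}),
\]
the last equality because $\mathfrak{p}$ is the unique prime of $E$ above $p$ and has residue degree $1$. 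Combining the two displays: $p^{n}$ divides the numerator of $L(\tilde{J}^{(p)}_{D},1)/\Omega_{D}$ if and only if $\Lambda(U_{\mathfrak{P}},\chi_{D})\in I^{n}\cdot H_{1}(X_{0}(N),\Z)^{+}_{\mathfrak{P}}$.

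Then I would simply feed Theorem \ref{main_thm_bis} into this dictionary. Its unconditional assertion $\Lambda(U_{\mathfrak{P}},\chi_{D})\in I\cdot H_{1}(X_{0}(N),\Z)^{+}_{\mathfrak{P}}$ gives, with $n=1$, that $p$ divides the numerator of $L(\tilde{J}^{(p)}_{D},1)/\Omega_{D}$. Its list of equivalences gives, with $n=2$, that (\ref{main_cor_bis_1})$\Leftrightarrow$(\ref{main_cor_bis_2})$\Leftrightarrow$``$\Sel_{p}(\tilde{J}^{(p)}_{D}/\Q)_{\mathfrak{P}}\neq 0$''. It only remains to remove the subscript: $\Sel_{p}(\tilde{J}^{(p)}_{D}/\Q)$ is killed by $p$ and is a module over $\mathbb{T}^{0}/\gamma_{\mathfrak{P}}$ (since $\gamma_{\mathfrak{P}}$ kills $\tilde{J}^{(p)}$, hence $\tilde{J}^{(p)}_{D}[p]$, hence the Selmer group), while $(\mathbb{T}^{0}/\gamma_{\mathfrak{P}})\otimes\Z_{p}=\mathbb{T}^{0}_{\mathfrak{P}}$ is local whose maximal ideal above $p$ is $\mathfrak{P}$; therefore $\Sel_{p}(\tilde{J}^{(p)}_{D}/\Q)_{\mathfrak{P}}=\Sel_{p}(\tilde{J}^{(p)}_{D}/\Q)$, which yields (\ref{main_cor_bis_2})$\Leftrightarrow$(\ref{main_cor_bis_3}) and completes the deduction.

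The main obstacle is the period comparison in the second step: one must verify that the real Néron period $\Omega_{D}$ agrees, up to a $p$-adic unit, with the cohomological period implicit in the normalization of $\Lambda(U_{\mathfrak{P}},\chi_{D})$ --- equivalently, that the relevant Manin constants (of $\tilde{J}^{(p)}$ as an optimal quotient of $J_{0}(N)$, and of its quadratic twist) are prime to $p$. This is also exactly what guarantees that $L(\tilde{J}^{(p)}_{D},1)/\Omega_{D}$ is $p$-integral, so that ``numerator'' has the intended meaning, and one expects it to be handled as in the odd case treated in \cite{Mazur_1979}. Everything else is formal manipulation around Theorem \ref{main_thm_bis}.
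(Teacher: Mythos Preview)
Your proposal is correct and follows essentially the same route as the paper: the paper deduces Corollary~\ref{main_cor_bis} from Theorem~\ref{main_thm_bis} via Proposition~\ref{prop_key_equivalence}, which is exactly the dictionary $v_p\!\left(\tfrac{L(\tilde{J}^{(p)}_{D},1)}{\Omega_D}\right)=\max\{n:\Lambda(U_{\mathfrak{P}},\chi_D)\in I^n\cdot H_1(X_0(N),\Z)^+_{\mathfrak{P}}\}$ that you set up, and the period comparison you flag as the main obstacle is precisely what is handled in Section~\ref{section_periods} (Propositions~\ref{prop_neron_period_twist} and~\ref{prop_product_periods}). Your additional justification for dropping the subscript $\mathfrak{P}$ on the Selmer group under Hypothesis~\ref{hypo} is correct and makes explicit a step the paper leaves implicit.
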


Let us explain the difficulties which arise when trying to follow Mazur's approach.
\begin{itemize}
\item Step 1 (the congruence formula for the $L$-value). Since $\chi_D$ is even in our case, we have $L(\chi_D, 0)=0$. Thus, the na\"ive congruence argument above yields that $p$ always divides the algebraic part of $L(f, \chi_D, 1)$. This can easily be proved rigorously using a congruence for even modular symbols due to Mazur (\cf \cite[p. 211]{Mazur_1979}). The fact that $p$ always divides $\frac{L(\tilde{J}^{(p)}_{D},1)}{\Omega_D}$ if $D>0$ is actually predicted by the BSD conjecture: the $N$th Tamagawa number of $\tilde{J}^{(p)}_{D}$ is the same as the $N$th Tamagawa number of $\tilde{J}^{(p)}$ since $N$ splits in $K$, and the latter is divisible by $p$ by \cite[Theorem A.1]{Mazur_Eisenstein}. In order to understand $\frac{L(\tilde{J}^{(p)}_{D},1)}{\Omega_D}$ modulo $p^2$, one way would be to have a congruence formula for even modular symbols modulo $p^2$. Such a congruence has been given in \cite[Theorem 1.12]{LW} using Sharifi's conjecture at level $\Gamma_1(N)$. However, this formula is given in terms of Manin symbols and is not enough for our purposes. One needs to use Sharifi's conjecture at level $\Gamma_1(ND)$. We build on our recent work \cite{LW_compatibility} to get an unconditional congruence formula.

\item Step 2 (the computation of the $p$-Selmer group). A similar argument as in the case where $K$ is imaginary applies: one relates $\Sel_p(\tilde{J}^{(p)}_D/\Q)$ to $H^1(\Spec(A), \mu_p) \oplus \Hom(\Pic(A), \Z/p\Z) $ where $A = \mathcal{O}_K[1/N]$. If $K$ is real, then $A^{\times} \otimes \Z/p\Z$ has dimension $3$ (one dimension coming from the fundamental unit, and the two others from the prime dividing $N$ in $\mathcal{O}_K$), so the computation of $\Sel_p(\tilde{J}^{(p)}_D/\Q)$ is more complicated. A careful analysis of the Selmer conditions is necessary (we rely on the $N$-adic uniformization of $J_0(N)$ and results of \cite{deShalit} for the local condition at $N$).
\end{itemize}

Let us conclude this introduction by a few comments. 

The general philosophy behind our result is a relation between the analytic or algebraic aspects of $\GL_2$ and the ones of $\GL_1$. That is, we relate a degree $2$ $L$-function (the $L$-function of a cuspidal modular form) to a degree $1$ $L$-function (the $L$-function of a Dirichlet character). Similarly, we relate a Selmer group for an abelian variety to a Selmer group for a character (a class group). The reason behind such a relation is an Eisenstein congruence, between a cusp form and an Eisenstein series (as explained briefly in Step 1 of Mazur's result). This philosophy is of course not new, and dates back at least to the work of Ribet on the converse of Herbrand's theorem \cite{Ribet_Herbrand}. For a recent example, see \cite{Shankar}. One general phenomenon is that ``half'' of the $L$-values of the Eisenstein series are zero (here meaning that for $D>0$ we have $L(E_2, \chi_D, 1)=0$), and so direct Eisenstein congruences do not give much information. To go beyond, one needs a ``higher'' theory of Eisenstein congruences, \eg as developed in \cite{Lecouturier_higher}.

We expect that for a positive proportion and $< 100 \%$ of the $D$'s as in Theorem \ref{main_thm_bis}, we have $\Sel_p(\tilde{J}^{(p)}_D/\Q) \neq 0$. By Theorem \ref{main_thm_bis}, this is equivalent to saying that $u(K)^{h(K)}$ is a $p$th power modulo (a prime above) $N$ for a positive proportion and $< 100\%$ of the real quadratic fields $K$ in which $N$ splits (here, $N$ and $p$ are fixed). We do not know if classical techniques from number theory can yield results on this conjecture.

Let us notice that the congruence formula for the $L$-value modulo $p^2$ we obtain here can be proved using a different approach. Namely, one can use a formula of Popa (based on a general formula of Waldspurger) to express $L(f/K, 1)$, where $K$ is quadratic real and $f$ is a cuspidal eigenform, in terms of closed Heegner geodesics. We refer to \cite{Lecouturier_HV} for the details. We like our present approach because it does not rely on the difficult result of Waldspurger and Popa.

Finally, let us comment on the recent work of Castella--Grossi--Skinner \cite{Skinner_BSD}. They prove the $p$-part of the BSD formula for certain elliptic curves $E$ over $\Q$ of (algebraic) rank $\leq 1$ and with a degree $p$-isogeny over $\Q$ (\cf \cite[Theorem D]{Skinner_BSD}). Their method is to prove and rely on the Iwasawa main conjecture in this Eisenstein case \cite[Theorem A]{Skinner_BSD}. To do so, they actually prove Perrin--Riou's anti-cyclotomic Iwasawa main conjecture under some asumptions for an auxiliary imaginary quadratic field (\cf \cite[Conjecture B and Theorem C]{Skinner_BSD}). 

The results of \cite{Skinner_BSD} and ours, which are independent and use totally different methods, intersect in the following single example: when $E=X_0(11)$ and $p=5$. They prove the $p$-part of the BSD formula for quadratic twists $E_D$ of $E$ by $K=\Q(\sqrt{D})$ where $D>0$ is such that $p$ is \emph{inert} in $K$ (and $N=11$ splits in $K$). As mentioned, they also need to assume that the rank of $E_D$ is zero. We do not have such a restriction on the rank (which could be $\geq 2$ for us), and we also handle the case when $p$ splits in $K$. On the one hand, their result in this example and under their assumptions is more precise since they prove the exact $p$-part of BSD while we only prove BSD ``modulo $p$''. On the other hand,  our main result Theorem \ref{main_thm_bis} is more explicit since we give a criterion in terms of the fundamental unit and the class number. We also handle higher dimensional abelian varieties, which is not the case in \cite{Skinner_BSD} (we do not know whether they may extend their methods to modular eigenforms with non-rational Hecke fields).

The structure of our paper is as follows. In section \ref{section_periods}, we relate the various periods occurring in this paper. In section \ref{section_L_modSymb}, we reduce the question of the divisibility of $\Lambda(U_{\mathfrak{P}}, \chi_D)$ and $\frac{L(\tilde{J}^{(p)}_{D},1)}{\Omega_D}$ to a question about modular symbols. In section \ref{section_Sharifi}, we prove the result one needs on the conjecture of Sharifi. In section \ref{section_L_value}, we use the results of section \ref{section_Sharifi} to prove the modulo $p^2$ Eisenstein congruence of Step $1$ above. In section \ref{section_uniformization}, we recall some results of de Shalit regarding the $N$-adic uniformization of $J_0(N)$. Finally, in section \ref{section_selmer group computation}, we compute the completion at $\mathfrak{P}$ of the Selmer group of $\tilde{J}^{(p)}_D$.

\section{Notation}\label{section_notation}
We fix primes $N$ and $p$ $\geq 5$ such that $p \mid \mid N-1$ (\ie $\gcd(N-1,p^2)=p$). Fix a surjective group homomorphism $\log : (\Z/N\Z)^{\times} \rightarrow \Z/p\Z$.

We denote by $X_0(N)$ (resp. $X_1(N)$) the compact modular curve of level $\Gamma_0(N)$ (resp. $\Gamma_1(N)$). We denote by $H_1(X_0(N), \Z)$ the first singular homology group of $X_0(N)$ with coefficients in $\Z$. A similar notation applies to $X_1(N)$.

Fix algebraic closures $\overline{\Q}$ and $\overline{\Q}_p$ of $\Q$ and $\Q_p$ respectively, together with embeddings $\overline{\Q} \hookrightarrow \overline{\Q}_p$ and $\overline{\Q} \hookrightarrow \C$. If $x,y \in \C^{\times}$, we write $x \sim y$ if $\frac{x}{y}$ is an algebraic number which is also a $p$-adic unit with respect to our fixed embeddings. 

We fix a real quadratic field $K$ of discriminant $D>0$ such that $\gcd(D, Np)=1$. We denote by $\chi_D : (\Z/D\Z)^{\times} \rightarrow \{\pm 1\}$ the Dirichlet character corresponding to $K$. We denote by $u(K)$ and $h(K)$ a fundamental unit and the class number of $K$ respectively. We assume that $N$ splits in $K$.

\section{Periods} \label{section_periods}

Let us recall the definition of the real N\'eron period $\Omega_A$ of an abelian variety $A$ over $\Q$. Let $\mathcal{A}$ be the N\'eron model of $A$ over $\Z$ and $\omega_A$ be a generator of the global differential $g$-forms of $\mathcal{A}$, where $g=\dim(A)$. We define $\Omega_A = \int_{A(\mathbf{R})} \mid \omega_A \mid$.

\begin{prop}\label{prop_neron_period_twist}
Let $A_D$ denote the quadratic twist of $A$ by $K=\Q(\sqrt{D})$ (where $D$ is as above). Then $\Omega_A \sim \Omega_{A_D}$.
\end{prop}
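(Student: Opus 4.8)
The plan is to exploit that $A_D$ becomes isomorphic to $A$ over $K$ and to compare the two N\'eron differentials one place at a time. First fix an isomorphism $\psi \colon A_D\times_\Q K \xrightarrow{\ \sim\ } A\times_\Q K$ defined over $K$, which exists by the very definition of the quadratic twist. Since $\omega_A$ and $\omega_{A_D}$ are in particular nonzero global top differentials on $A$ and $A_D$ over $\Q$, and the space of global top differentials on $A_D\times_\Q K$ is one-dimensional over $K$, there is a unique $\lambda\in K^\times$ with $\psi^*\omega_A = \lambda\,\omega_{A_D}$ (after base change to $K$). The proposition will follow from two claims: (i) $\Omega_A/\Omega_{A_D} = \lvert\iota_\infty(\lambda)\rvert$ for a real embedding $\iota_\infty$ of $K$, so that $\Omega_A/\Omega_{A_D}$ is $\pm$ a $\Q$-conjugate of $\lambda$, hence algebraic; and (ii) $\lambda$ lies in $\mathcal{O}_{K,v}^\times$ for every place $v$ of $K$ above $p$, so that $\lambda$ and all its $\Q$-conjugates, hence $\Omega_A/\Omega_{A_D}$, are $p$-adic units under our fixed embedding $\overline{\Q}\hookrightarrow\overline{\Q}_p$.

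For (i): since $D>0$, choose a real embedding $\iota_\infty\colon K\hookrightarrow\mathbf{R}$; base-changing $\psi$ along $\iota_\infty$ produces an isomorphism of compact real Lie groups $\psi_\mathbf{R}\colon A_D(\mathbf{R})\xrightarrow{\ \sim\ } A(\mathbf{R})$ with $\psi_\mathbf{R}^*\,\omega_A = \iota_\infty(\lambda)\cdot\omega_{A_D}$ (differential forms and the scalar $\lambda$ base-change, and $\omega_A,\omega_{A_D}$ already descend to $\Q$). Applying the change-of-variables formula to the diffeomorphism $\psi_\mathbf{R}$, which merely permutes the connected components of the real points, gives
\[
\Omega_A=\int_{A(\mathbf{R})}\lvert\omega_A\rvert=\int_{A_D(\mathbf{R})}\lvert\psi_\mathbf{R}^*\omega_A\rvert=\lvert\iota_\infty(\lambda)\rvert\int_{A_D(\mathbf{R})}\lvert\omega_{A_D}\rvert=\lvert\iota_\infty(\lambda)\rvert\cdot\Omega_{A_D}.
\]

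For (ii): as $p\geq 5$ and $p\nmid D$, the prime $p$ is unramified in $K$, so $\mathcal{O}_{K,v}$ is unramified over $\Z_p$ for every $v\mid p$; moreover $A_D\times_\Q\Q_p$ is $A\times_\Q\Q_p$ itself when $p$ splits in $K$, and its unramified quadratic twist when $p$ is inert, so in either case $A_D\times_\Q K_v\cong A\times_\Q K_v$. Because N\'eron models commute with unramified base change and are unique, $\mathcal{A}_D\times_\Z\mathcal{O}_{K,v}$ and $\mathcal{A}\times_\Z\mathcal{O}_{K,v}$ are the N\'eron models of these, hence isomorphic, and by the N\'eron mapping property the base change of $\psi$ to $K_v$ extends to such an isomorphism of models over $\mathcal{O}_{K,v}$. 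Such an isomorphism carries the generator $\omega_{A_D}$ of the free rank-one module of invariant top differentials over $\mathcal{O}_{K,v}$ to a generator, forcing $\lambda\in\mathcal{O}_{K,v}^\times$. Together with (i) this gives $\Omega_A\sim\Omega_{A_D}$.

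The argument is essentially routine; the only place demanding care, and the only use of the hypothesis $p\nmid D$, is ensuring in step (ii) that the single constant $\lambda$ coming from the $K$-isomorphism really coincides with the local comparison constants — which holds precisely because $\omega_A$ and $\omega_{A_D}$ are defined over $\Z$, so that differential forms and the scalar $\lambda$ base-change compatibly. Primes dividing $2D$ may well contribute nontrivial rational factors to $\Omega_A/\Omega_{A_D}$, but these are harmless: we only need control at the finitely many places above $p$, where the twist is unramified.
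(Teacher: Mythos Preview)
Your proof is correct and in fact cleaner than the paper's. Both arguments rest on the same two ingredients: that $A$ and $A_D$ become isomorphic over $K$, and that N\'eron models commute with the unramified base change $\Z_p \to \mathcal{O}_{K,v}$ (using $p\nmid D$). The paper, however, proceeds indirectly: it defines periods $\Omega_{A'}$ and $\Omega_{A'_D}$ for the base changes $A'=A\times_\Q K$ and $A'_D=A_D\times_\Q K$ over $\mathcal{O}_K[1/D]$, observes that $\Omega_{A'}\sim\Omega_A^2$ and $\Omega_{A'_D}\sim\Omega_{A_D}^2$ (via $A'(\mathbf{R})=A(\mathbf{R})\times A(\mathbf{R})$ since $D>0$), and then uses $\mathcal{A}'\simeq\mathcal{A}'_D$ to conclude $\Omega_A^2\sim\Omega_{A_D}^2$, hence $\Omega_A\sim\Omega_{A_D}$. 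Your approach bypasses the squaring trick by isolating the single scalar $\lambda\in K^\times$ relating $\psi^*\omega_A$ to $\omega_{A_D}$, computing $\Omega_A/\Omega_{A_D}=|\iota_\infty(\lambda)|$ directly by change of variables, and checking $\lambda\in\mathcal{O}_{K,v}^\times$ for $v\mid p$ place by place. This is more transparent and would adapt more readily to finer statements (e.g.\ identifying the exact ratio, as in the remark the paper makes about \cite{Pal_quadratictwist}). One minor comment: your aside that $A_D\times_\Q\Q_p$ is either $A\times_\Q\Q_p$ or its unramified twist is unnecessary, since you already have the isomorphism over $K$ and hence over each $K_v$; but it does no harm.
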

\begin{proof}
Let $A':=A \times_{\Q} K$ and $A'_D = A_D \times_{\Q} K$. Let $\mathcal{A}'$ (resp. $\mathcal{A}'_D$) be the N\'eron model of $A'$ (resp. $A'_D$) over $\mathcal{O}_K[\frac{1}{D}]$. Since $A' \simeq A'_D$ over $K$, we have $\mathcal{A}' \simeq \mathcal{A}'_D$ over $\mathcal{O}_K[\frac{1}{D}]$. Since $\Spec(\mathcal{O}_K[\frac{1}{D}]) \rightarrow \Spec(\Z[\frac{1}{D}])$ is unramified, we have $\mathcal{A}' = \mathcal{A} \times \Spec(\mathcal{O}_K[\frac{1}{D}])$ and $\mathcal{A}'_D = \mathcal{A}_D \times \Spec(\mathcal{O}_K[\frac{1}{D}])$ where $\mathcal{A}$ (resp. $\mathcal{A}_D$) is the N\'eron model of $A$ (resp. $A_D$) over $\Z[\frac{1}{D}]$. Note that we have $A'(\mathbf{R}) = A(\mathbf{R}) \times A(\mathbf{R}) $. Since a generator of the global differential $g$-forms of $\mathcal{A}$ induce by base change a generator of the global differential $g$-forms of $\mathcal{A}'$, we get $\Omega_{A'} = \Omega_{A}^2$ (both quantity are actually only well-defined up to $D$-units, but in any case $\Omega_{A'} \sim \Omega_{A}^2$ since $p\nmid D$). Similarly, we get $\Omega_{A'} \sim \Omega_{A_D}^2$. We conclude that $\Omega_{A_D} \sim \Omega_A$.
\end{proof}

\begin{rems}
\begin{enumerate}
\item It would be interesting to have a precise formula relating $\Omega_A$ and $\Omega_{A_D}$. Such a formula has been found when $A$ is an elliptic curve in \cite{Pal_quadratictwist}. 
\item As in the introduction, we shall use the notation $\Omega_D$ instead of $\Omega_{A_D}$ for simplicity.
\end{enumerate}
\end{rems}

Let us now make the link between periods of modular abelian varieties and periods of modular eigenforms. If $f \in S_2(\Gamma_0(N))$ is a normalized eigenform, there exists $\Omega_f^+, \Omega_f^-$ in $\mathbf{C}$ such that for any $c \in H_1(X_0(N), \Z)^{\pm}$, the complex number $\frac{\int_c 2i\pi f(z)dz}{\Omega_f^{\pm}}$ is algebraic, and furthermore is $p$-adically integral under our fixed embeddings. Here, $H_1(X_0(N), \Z)^{\pm}$ is the subgroup of $H_1(X_0(N), \Z)$ on which the complex conjugation acts by $\pm 1$. We additionally require that there exists $c \in H_1(X_0(N), \Z)^{\pm}$ such that $\frac{\int_c 2i\pi f(z)dz}{\Omega_f^{\pm}}$ is a $p$-adic unit. Under these conditions, the periods $\Omega_f^{\pm}$ is unique up to a $p$-adic unit.

\begin{prop}\label{prop_product_periods}
We have $\Omega_{\tilde{J}^{(p)}} \sim \prod_{f \in S_2^{\Eis}(N,p)} \Omega_f^+$ (recall from Section \ref{Section_introduction} that $S_2^{\Eis}(N,p)$ is the set of eigenforms in $S_2(\Gamma_0(N))$ which are congruent to the Eisenstein series $E_2$ modulo some prime ideal above $p$).
\end{prop}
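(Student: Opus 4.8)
The plan is to compare the two periods through the homology/cohomology of $X_0(N)$ and a de Rham–Betti comparison, localized at the ideal $\mathfrak{P}$. First I would use the fact that $\tilde{J}^{(p)}$ is the quotient of $J_0(N)$ by $\gamma_{\mathfrak{P}}\cdot J_0(N)$, so that the cotangent space of $\tilde{J}^{(p)}$ at the origin is identified with $S_2(\Gamma_0(N))_{\mathfrak{P}}$ (the span of the eigenforms in $S_2^{\Eis}(N,p)$), and the real homology $H_1(\tilde{J}^{(p)}(\mathbf{R}), \Z)$ — up to the subtleties of the component group, which contribute only to $2$-power and $D$-unit indices, hence are invisible under $\sim$ since $p \geq 5$ — is identified with $H_1(X_0(N), \Z)_{\mathfrak{P}}^{+}$. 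Indeed, for an abelian variety $A/\Q$ with Néron model $\mathcal{A}$, one has $\Omega_A = \int_{A(\mathbf{R})} |\omega_A| \sim \text{(covolume of } H_1(A(\mathbf{R}),\Z) \text{ against a } \Z\text{-basis of } H^0(\mathcal{A}, \Omega^1))$ up to rational numbers prime to $p$; the point is that the Néron differential generates a lattice in $H^0(A_{\overline{\Q}}, \Omega^1)$ that agrees, after localization at $p$ and up to the Manin constant (which is prime to $p$ here, since $p \geq 5$ and by Mazur's work the relevant congruence primes are controlled), with the lattice coming from integral modular symbols.

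Next I would push everything through the optimal-quotient structure. Since $\mathbb{T}^0_{\mathfrak{P}}$ is a DVR (by the assumption $p^2 \nmid N-1$ and \cite[Proposition II.19.1]{Mazur_Eisenstein}) and $\mathfrak{P}$ is a maximal ideal with $\mathbb{T}^0/\mathfrak{P} = \F_p$, the completion $S_2(\Gamma_0(N), \Z)_{\mathfrak{P}}$ and $H_1(X_0(N), \Z)_{\mathfrak{P}}$ are free $\Z_p$-modules, and the $\mathbb{T}_{\mathfrak{P}}^0 \otimes \overline{\Q}_p$-eigenspace decomposition groups the eigenforms $f \in S_2^{\Eis}(N,p)$ together. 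The period $\Omega_{\tilde{J}^{(p)}}$ is then, up to a $p$-adic unit, the determinant of the period pairing between $H^0(\mathcal{A}, \Omega^1) \otimes \Z_p = S_2(\Gamma_0(N), \Z)_{\mathfrak{P}}$ and the lattice $H_1(X_0(N), \Z)_{\mathfrak{P}}^{+}$, computed via $c \mapsto \int_c 2i\pi f(z)\, dz$. Choosing bases adapted to the eigenform decomposition, this determinant factors as $\prod_{f} \Omega_f^+$ up to the determinant of a change-of-basis matrix relating the integral structures — and here the defining property of $\Omega_f^{\pm}$ (that $\frac{\int_c 2i\pi f(z)dz}{\Omega_f^{\pm}}$ is $p$-adically integral and a $p$-adic unit for some $c$) is exactly what makes this change-of-basis matrix have $p$-adic unit determinant.

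The main obstacle I anticipate is controlling the \emph{integral} lattices precisely rather than up to an unknown index. Three things must be pinned down: (i) that the Néron differentials of $\tilde{J}^{(p)}$ pull back to a $\Z_{(p)}$-basis of $S_2(\Gamma_0(N), \Z_{(p)})_{\mathfrak{P}}$ — i.e. a Manin-constant-type statement for this optimal quotient, which should follow from $p \geq 5$ together with Mazur's analysis of the Eisenstein quotient and the Abbes–Ullmo / Raynaud results on Manin constants; (ii) that $H_1(\tilde{J}^{(p)}(\mathbf{R}),\Z)_{\mathfrak{P}} \cong H_1(X_0(N),\Z)_{\mathfrak{P}}^{+}$ without a $p$-contribution from the real component group $\pi_0(\tilde{J}^{(p)}(\mathbf{R}))$, which is a $2$-group; and (iii) that passing from $H^0(\mathcal{A}, \Omega^1)$ over $\Z$ to $S_2$ over $\Z_{(p)}$ loses nothing $p$-adically. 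Each of these is standard "up to $\sim$" for $p \geq 5$, but assembling them cleanly is the crux; the actual period computation, once the lattices match, is a determinant bookkeeping that I would not grind through here.
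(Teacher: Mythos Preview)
Your proposal is correct and follows essentially the same approach as the paper: both reduce $\Omega_{\tilde{J}^{(p)}}$ to the determinant of the period pairing between a $\Z_p$-basis of $S_2(\Gamma_0(N),\Z_p)[\gamma_{\mathfrak{P}}]$ and a $\Z_p$-basis of $H_1(X_0(N),\Z)^+_{\mathfrak{P}}$, invoke the Manin constant being prime to $p$ (the paper cites \cite[Theorem 3.5]{Manin_constant}), and then pass to the eigenform basis via a change-of-basis whose determinant is a $p$-adic unit. The paper makes the ``determinant bookkeeping'' explicit by using that $H_1(X_0(N),\Z)^+_{\mathfrak{P}}$ is free of rank one over $\mathbb{T}^0_{\mathfrak{P}}$ (Mazur, \cite[Proposition II.18.3]{Mazur_Eisenstein}), choosing a generator $c$ and Hecke operators $T_{n_1},\dots,T_{n_g}$ as a $\Z_p$-basis, and then observing that with $\Omega_{F_b}^+ := \int_c 2i\pi F_b(z)\,dz$ the change-of-basis determinants cancel exactly---but this is precisely the computation you flagged as routine.
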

\begin{proof}
For simplicity, let us denote $\tilde{J}^{(p)}$ by $A$ and $\gamma_{\mathfrak{P}}$ by $\mathfrak{a}$ (\cf (\ref{eq_gamma_P}) for the definition of $\gamma_{\mathfrak{P}}$). We also let $T = (\mathbb{T}^0/\mathfrak{a}) \otimes_{\Z} \Z_p$. Let $$g = \dim(A) = \# S_2^{\Eis}(N,p)$$ and let $\mathcal{A}$ be the N\'eron model of $A$ over $\Z$. The pull-back map $$H^0(\mathcal{A}, \Omega_{\mathcal{A}}^1) \hookrightarrow H^0(J_0(N)/\Q, \Omega_{J_0(N)/\Q}^1) = S_2(\Gamma_0(N), \Q)$$ has image contained $S_2(\Gamma_0(N), \Z)[\mathfrak{a}]$, where for a ring $R \subset \mathbf{C}$, we denote $S_2(\Gamma_0(N), R)$ the subspace of cusp forms whose $q$-expansion at $\infty$ has coefficients in $R$. The \emph{Manin constant} of $A$ is by definition the index $$c_A:= [S_2(\Gamma_0(N), \Z)[\mathfrak{a}] : H^0(\mathcal{A}, \Omega_{\mathcal{A}}^1)]$$ (\cf \cite[\S 3.1]{Manin_constant}). By \cite[Theorem 3.5]{Manin_constant}, we have $p \nmid c_A$.

Let $$\phi : H_1(X_0(N), \Z)^+/\mathfrak{a}  \otimes \mathbf{C} \rightarrow \Hom(S_2(\Gamma_0(N), \Z)[\mathfrak{a}], \mathbf{C})$$ be the map given by the pairing $(c,f)\mapsto \int_c 2i\pi f(z)dz$ (where the `$+$' in the exponent means the invariants under the complex conjugation). By \cite[Proposition II.18.3]{Mazur_Eisenstein}, $H_1(X_0(N), \Z)^+/\mathfrak{a} \otimes \Z_p$ is free of rank $1$ over $T$. Let $c\in H_1(X_0(N), \Z)^+$ giving a generator of $H_1(X_0(N), \Z)^+/\mathfrak{a} \otimes \Z_p$ over $T$. Let $n_1, ..., n_g \in \mathbf{N}$ such that the Hecke operators $T_{n_1}$, ..., $T_{n_g}$ for a $\Z_p$-basis of $T$. Finally, fix a family $f_1$, ... $f_g$ in $S_2(\Gamma_0(N), \Z)[\mathfrak{a}]$ which gives a $\Z_p$-basis of $S_2(\Gamma_0(N), \Z_p)[\mathfrak{a}]$ (the latter being a free $T$-module of rank one). 

Since $p \nmid c_A$, we easily see that $\Omega_A \sim \det(M)$, where $$M = (\int_{T_{n_a}c} 2i\pi f_b(z)dz)_{1 \leq a,b \leq g} \text{.}$$ Let $F_1$, ..., $F_g$ be the elements of $S_2^{\Eis}(N,p)$. Note that $(F_1, ..., F_g)$ is a $\overline{\Q}_p$-basis of $S_2(\Gamma_0(N), \overline{\Q}_p)[\mathfrak{a}]$. Let $P$ be the matrix of $(F_1, ..., F_g)$ in $(f_1, ..., f_g)$. If $f \in S_2(\Gamma_0(N))$ and $n\in \mathbf{N}$, we denote by $a_n(f)$ the $n$th Fourier coefficient of $f$ at $\infty$. We have $$\det(a_{n_a}(F_b))_{1 \leq a,b \leq g} = \det(P)\cdot  \det(a_{n_a}(f_b))_{1 \leq a,b \leq g} \sim \det(P)$$ since $\det(a_{n_a}(f_b))_{1 \leq a,b \leq g}$ is a $p$-adic unit by construction. Thus, we have 
\begin{align*}
\det(M) &= \det(P)^{-1}\cdot \det(\int_{T_{n_a}c} 2i\pi F_b(z)dz)_{1 \leq a,b \leq g} \\ &=\det(P)^{-1} \cdot \det(\int_{c} 2i\pi a_{n_a}(F_b)\cdot F_b(z)dz)_{1 \leq a,b \leq g} \\& \sim \det(a_{n_a}(F_b))_{1 \leq a,b \leq g}^{-1}\cdot \det(\int_{c} 2i\pi a_{n_a}(F_b)\cdot F_b(z)dz)_{1 \leq a,b \leq g} \text{ .}
\end{align*}
By construction of $c$, we can choose $\Omega_{F_b}^+ = \int_c 2i\pi F_b(z)dz$ . We thus get 
\begin{align*}
\det(M) &\sim \det(a_{n_a}(F_b))_{1 \leq a,b \leq g}^{-1}\cdot \prod_{b=1}^g \Omega_{F_b}^+ \cdot \det(a_{n_a}(F_b))_{1 \leq a,b \leq g} 
\\& \sim \prod_{b=1}^g \Omega_{F_b}^+ \text{ .}
\end{align*}
This concludes the proof of Proposition \ref{prop_product_periods}.
\end{proof}

\section{$L$-values and modular symbols}\label{section_L_modSymb}
In this section, we reformulate the problem of computing the $p$-adic valuation of $\frac{L(\tilde{J}^{(p)}_{D},1)}{\Omega_D}$ in terms of modular symbols. 

Let us first introduce some basic notation regarding modular symbols. If $\Gamma$ is a finite index subgroup of $\SL_2(\Z)$, we denote by $X(\Gamma)$ the compact modular curve of level $\Gamma$ and by $C(\Gamma)$ the subset of cusps of $X(\Gamma)$.  One can consider the relative singular homology group $H_1(X(\Gamma), C(\Gamma), \Z)$. If $\alpha, \beta \in \mathbf{P}^1(\Q)$, we denote by $\{\alpha, \beta\}$ the class in $H_1(X(\Gamma), C(\Gamma), \Z)$ of the hyperbolic path in the extended upper-half plane from $\alpha$ to $\beta$. The element $\{\alpha, \beta\}$ is called a \emph{modular symbol}.

Recall that in section \ref{section_periods} we have defined a period $\Omega_f^+$ attached to a normalized eigenform $f \in S_2(\Gamma_0(N))$. Let $\mathfrak{m}$ be the maximal ideal of residual characteristic $p$ corresponding to $f$. If $H_1(X_0(N), \Z)_{\mathfrak{m}}^+$ (the $\mathfrak{m}$-adic completion of $H_1(X_0(N), \Z)^+$) is free of rank one over $\mathbb{T}_{\mathfrak{m}}^0$, then one may choose a generator $c$ of $H_1(X_0(N), \Z)_{\mathfrak{m}}^+$ and define $\Omega_f^+ = \int_c 2i\pi f(z)dz$. In this situation, the period map $\varphi_f : H_1(X_0(N), \Z)^+ \rightarrow \overline{\Q}_p$ given by $\gamma \mapsto \frac{\int_\gamma 2i\pi f(z)dz}{\Omega_f^+}$ takes values in $\overline{\Z}_p$. Furthermore, if $\sigma \in \Gal(\overline{\Q}_p/\Q_p)$ then we may normalize the periods so that $\varphi_{f^{\sigma}} = \sigma \circ \varphi_f$, where $f^{\sigma}$ is the conjugate of $f$ by $\sigma$. 

This discussion applies when $f \in S_2^{\Eis}(N,p)$. We fix such a $f$ in this section. The assumption that $p \mid \mid N-1$ implies that all the other such eigenforms are of the form $f^{\sigma}$ for some $\sigma \in \Gal(\overline{\Q}/\Q)$. Indeed, in this case the Eisenstein ideal $I$ is maximal \cite[Proposition II.9.7]{Mazur_Eisenstein}, and since it is principal \cite[Proposition II.16.6]{Mazur_Eisenstein} then we conclude that $\mathbb{T}_{\mathfrak{P}}^0$ is a discrete valuation ring, and in particular has no zero-divisors. 

Let us now give a reformulation of Hypothesis \ref{hypo}. We have just noticed that $\# S_2^{\Eis}(N,p)$ is the number of conjugates of $f$ by automorphisms in $\Gal(\overline{\Q}/\Q)$. On the other hand, $g_p$ is the number of conjugates of $f$ by automorphisms in $\Gal(\overline{\Q}_p/\Q_p)$ (recall that we have fixed an embedding $\overline{\Q} \hookrightarrow \overline{\Q}_p$). Therefore, Hypothesis \ref{hypo} holds if and only if all the conjugates of $f$ (\ie eigenforms in $S_2^{\Eis}(N,p)$) are actually conjugates of $f$ by some $\sigma \in \Gal(\overline{\Q}_p/\Q_p)$.

As is well-known (\cf \cite[\S 6 Proposition]{Mazur_1979}), if $\chi : (\Z/m\Z)^{\times} \rightarrow \mathbf{C}^{\times}$ is a primitive Dirichlet character of conductor $m>1$ coprime to $N$ then we have 
\begin{equation}\label{Birch_formula}
\tau(\chi^{-1})\cdot L(f, \chi, 1) = \sum_{a \in (\Z/m\Z)^{\times}} \chi(a)^{-1}\cdot \int_{\frac{a}{m}}^{0} 2i\pi f(z)dz
\end{equation}
where $\tau(\chi^{-1}) = \sum_{a=0}^{m-1} \chi(a)^{-1}\cdot e^{\frac{2i\pi a}{m}}$ is the Gauss sum attached to $\chi^{-1}$. Since $\gcd(m, N)=1$, the cusps $0$ and $\frac{a}{m}$ of $X_0(N)$ are equivalent, so the modular symbol $\{0, \frac{a}{m}\}$ is in $H_1(X_0(N), \Z)$. Therefore, if $\chi$ is even (\ie $\chi(-1)=1$), the formula (\ref{Birch_formula}) can be rewritten as
\begin{equation}\label{Birch_formula_bis}
\tau(\chi^{-1})\cdot \frac{L(f, \chi, 1)}{\Omega_f^+} = -\sum_{a \in (\Z/m\Z)^{\times}} \chi(a)^{-1}\cdot \varphi_f(\{0, \frac{a}{m}\}) \text{ .}
\end{equation}

We apply this formula for $\chi = \chi_D$. Note that $\tau(\chi)^2 = D$, so $\tau(\chi)$ is coprime to $p$. Following Mazur \cite[II \S 6 and III \S 2]{Mazur_1979}, we let
\begin{equation}\label{eq_def_univ_L}
\Lambda(U_{\mathfrak{P}}, \chi_D) = \sum_{a \in (\Z/D\Z)^{\times}} \chi_D(a) \cdot \{0, \frac{a}{D}\} \in H_1(X_0(N), \Z)_{\mathfrak{P}}^{+} \text{ .}
\end{equation}
In view of (\ref{Birch_formula_bis}), it is natural to call $\Lambda(U_{\mathfrak{P}}, \chi_D)$ the $\mathfrak{P}$-part of the universal special value twisted by $\chi_D$. We let
$$\Theta_D =  \sum_{a \in (\Z/D\Z)^{\times}} \chi_D(a) \cdot \{0, \frac{a}{D}\} \in H_1(X_0(N), \Z)^{+} \text{ .}$$
Thus, $\Lambda(U_{\mathfrak{P}}, \chi_D)$ is just the image of $\Theta_D$ into the completion at $\mathfrak{P}$ of $H_1(X_0(N), \Z)^{\pm}$. 

We have the following classical $L$-function decomposition
\begin{equation}\label{L_function_dec_eq}
L(\tilde{J}^{(p)}_{D}, s) = L(\tilde{J}^{(p)}, \chi_D, s) = \prod_{\sigma \in \Gal(\overline{\Q}/\Q)} L(f^{\sigma}, \chi_D, s) \text{ .}
\end{equation}
Combining Proposition \ref{prop_neron_period_twist}, Proposition \ref{prop_product_periods}, (\ref{Birch_formula_bis}) and (\ref{L_function_dec_eq}), we get that if Hypothesis \ref{hypo} is satisfied then
$$
\frac{L(\tilde{J}^{(p)}_{D}, 1)}{\Omega_D} \sim \prod_{\sigma \in \Gal(\overline{\Q}_p/\Q_p)} \sigma \left( \sum_{a \in (\Z/D\Z)^{\times}} \chi_D(a)\cdot \varphi_f(\{0, \frac{a}{D}\}) \right) \text{ .}
$$

Note that $\varphi_f(\{0, \frac{a}{D}\})$ takes values in a totally ramified finite extension of $\Z_p$ (indeed, $\mathbb{T}_{\mathfrak{P}}^0$ is a totally ramified DVR, \cf \cite[Proposition II.19.1]{Mazur_Eisenstein}). 
Thus, the $\mathfrak{P}$-adic valuation of $ \sum_{a \in (\Z/D\Z)^{\times}} \chi_D(a)\cdot \varphi_f(\{0, \frac{a}{D}\})$ is the same as the $p$-adic valuation of $\frac{L(\tilde{J}^{(p)}_{D}, 1)}{\Omega_D} $. This proves the following result.
\begin{prop}\label{prop_key_equivalence}
Assume Hypothesis \ref{hypo}. Then the $p$-adic valuation of the rational number $\frac{L(\tilde{J}^{(p)}_{D},1)}{\Omega_D}$ is the largest integer $n\geq 0$ such that 
$$ \Theta_D \in I^n \cdot H_1(X_0(N), \Z_p)^+  \text{ .}$$
\end{prop}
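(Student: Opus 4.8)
The plan is to combine the period comparisons established in Section~\ref{section_periods} with the Birch-type formula \eqref{Birch_formula_bis} to translate the $p$-adic valuation of $\frac{L(\tilde{J}^{(p)}_{D},1)}{\Omega_D}$ into a statement about the depth of $\Theta_D$ in the Eisenstein filtration of $H_1(X_0(N),\Z_p)^+$. Essentially all the analytic input has already been assembled: we have the decomposition \eqref{L_function_dec_eq} of $L(\tilde{J}^{(p)}_D,s)$ into the product of the $L(f^\sigma,\chi_D,s)$ over $\sigma \in \Gal(\overline{\Q}/\Q)$, the identity $\Omega_{\tilde{J}^{(p)}} \sim \prod_{F} \Omega_F^+$ from Proposition~\ref{prop_product_periods}, the invariance $\Omega_D \sim \Omega_{\tilde{J}^{(p)}}$ from Proposition~\ref{prop_neron_period_twist}, and the observation that $\tau(\chi_D)$ is prime to $p$. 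Putting these together gives, under Hypothesis~\ref{hypo},
$$
\frac{L(\tilde{J}^{(p)}_D,1)}{\Omega_D} \sim \prod_{\sigma \in \Gal(\overline{\Q}_p/\Q_p)} \sigma\left( \sum_{a \in (\Z/D\Z)^\times} \chi_D(a)\cdot \varphi_f(\{0,\tfrac{a}{D}\}) \right),
$$
where the product over $\Gal(\overline{\Q}_p/\Q_p)$ suffices precisely because Hypothesis~\ref{hypo} says the $p$-adic and global conjugacy classes of $f$ coincide.

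Next I would interpret the right-hand side. The point is that $\varphi_f : H_1(X_0(N),\Z)^+ \to \overline{\Z}_p$ factors through the completion $H_1(X_0(N),\Z)_{\mathfrak{P}}^+$, which by \cite[Proposition II.18.3]{Mazur_Eisenstein} is free of rank one over $\mathbb{T}_{\mathfrak{P}}^0$, and $\varphi_f$ identifies it with $\mathbb{T}_{\mathfrak{P}}^0$ (up to a $p$-adic unit, since the period was chosen so that $\varphi_f$ hits a unit). Under this identification, $\Theta_D$ maps to $\Lambda(U_{\mathfrak{P}},\chi_D) \in \mathbb{T}_{\mathfrak{P}}^0$, and $\sum_a \chi_D(a)\varphi_f(\{0,\tfrac{a}{D}\})$ is the image of $\Lambda(U_{\mathfrak{P}},\chi_D)$ under the structure map $\mathbb{T}_{\mathfrak{P}}^0 \to \overline{\Z}_p$ attached to $f$. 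Applying all $\sigma \in \Gal(\overline{\Q}_p/\Q_p)$ and taking the product amounts to the norm $N_{\mathbb{T}_{\mathfrak{P}}^0/\Z_p}(\Lambda(U_{\mathfrak{P}},\chi_D))$. Since $\mathbb{T}_{\mathfrak{P}}^0$ is a totally ramified DVR over $\Z_p$ (by \cite[Proposition II.19.1]{Mazur_Eisenstein}, using $p^2 \nmid N-1$), the $p$-adic valuation of this norm equals the normalized valuation of $\Lambda(U_{\mathfrak{P}},\chi_D)$ in $\mathbb{T}_{\mathfrak{P}}^0$ — ramification index times $\mathfrak{P}$-valuation, divided out correctly so that one simply reads off the largest $n$ with $\Lambda(U_{\mathfrak{P}},\chi_D) \in \mathfrak{P}^n \cap \Z_p$-span; but because $I$ generates the maximal ideal of $\mathbb{T}_{\mathfrak{P}}^0$ (the Eisenstein ideal is principal and generates $\mathfrak{P}$ locally), this largest $n$ is exactly the largest $n$ with $\Theta_D \in I^n \cdot H_1(X_0(N),\Z_p)^+$.

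The one point requiring care — and the step I expect to be the main obstacle — is keeping the bookkeeping consistent between three filtrations: powers of the maximal ideal $\mathfrak{m}_{\mathbb{T}_{\mathfrak{P}}^0}$, powers of $p$ in the totally ramified extension (where the valuation of a uniformizer is $1/e$ relative to $p$), and powers of $I$ acting on $H_1$. Because $\mathbb{T}_{\mathfrak{P}}^0$ is a DVR with uniformizer a generator of $I$, and the extension is totally ramified, one has to verify that the norm computation does not introduce spurious factors of $p$ from the different or the conductor; this is where $p \nmid c_A$ (the Manin constant bound, already invoked in Proposition~\ref{prop_product_periods}) and $p \nmid \tau(\chi_D)$ and $p \nmid D$ all get used, so no extraneous $p$-units enter. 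Granting that, the chain of $\sim$-equivalences is an equality of $p$-adic valuations, and since $H_1(X_0(N),\Z_p)^+$ localized at $\mathfrak{P}$ is free of rank one over $\mathbb{T}_{\mathfrak{P}}^0$, the condition $\Theta_D \in I^n \cdot H_1(X_0(N),\Z_p)^+$ is literally the condition $v(\Lambda(U_{\mathfrak{P}},\chi_D)) \geq n$ in the DVR $\mathbb{T}_{\mathfrak{P}}^0$, completing the proof.
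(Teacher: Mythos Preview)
Your proposal is correct and follows essentially the same route as the paper: combine Propositions~\ref{prop_neron_period_twist} and~\ref{prop_product_periods} with \eqref{Birch_formula_bis} and \eqref{L_function_dec_eq} to express $\frac{L(\tilde{J}^{(p)}_D,1)}{\Omega_D}$ (up to a $p$-adic unit) as the product over $\Gal(\overline{\Q}_p/\Q_p)$-conjugates of $\varphi_f(\Theta_D)$, then use that $\mathbb{T}_{\mathfrak{P}}^0$ is a totally ramified DVR so that the $p$-adic valuation of this norm equals the $\mathfrak{P}$-adic valuation of $\varphi_f(\Theta_D)$, which is the $I$-depth of $\Theta_D$ since $H_1(X_0(N),\Z_p)^+_{\mathfrak{P}}$ is free of rank one. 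Your discussion of the ``one point requiring care'' is more explicit than the paper's, but the argument is the same.
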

Proposition \ref{prop_key_equivalence} shows that Corollaries \ref{main_cor} and \ref{main_cor_bis} follows from Theorems \ref{main_thm} and \ref{main_thm_bis} respectively.

We have the following ``trivial divisibility'' due to Mazur when $D>0$.
\begin{prop}\label{prop_valuation_1}
We have $\Theta_D \in I \cdot H_1(X_0(N), \Z_p)^+$. Therefore, assuming Hypothesis \ref{hypo}, $p$ divides the numerator of $\frac{L(\tilde{J}^{(p)}_{D},1)}{\Omega_D}$.
\end{prop}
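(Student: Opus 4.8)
The plan is to show directly that $\Theta_D$ lies in $I\cdot H_1(X_0(N),\Z_p)^+$ by relating $\Theta_D$ to an \emph{even} modular symbol whose image in the Eisenstein quotient is forced to vanish for parity reasons. Recall that $\Theta_D = \sum_{a\in(\Z/D\Z)^\times}\chi_D(a)\{0,a/D\}$, and since $\chi_D$ is even this is the ``$+$-part'' of a twisted winding element. The second conclusion (about the numerator of $L(\tilde J^{(p)}_D,1)/\Omega_D$) then follows immediately by combining with Proposition \ref{prop_key_equivalence}, so the whole content is the first sentence.

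First I would recall Mazur's computation of the boundary/Eisenstein behaviour of modular symbols on $X_0(N)$: the map $H_1(X_0(N),\Z)^+ \to (\mathbb{T}^0/I)$ induced by any element of $S_2^{\Eis}(N,p)$ (equivalently, reduction modulo the Eisenstein ideal) factors, on winding-type elements, through the Eisenstein series $E_2$. Concretely, modulo $I$ the period pairing $\varphi_f(\{0,a/D\})$ is computed by the Eisenstein series, i.e. it is given (up to a unit) by a partial sum of the values $\sum_{d\mid n}d$-type coefficients, equivalently by the generalized Bernoulli-type quantity attached to $\chi_D$. This is exactly the congruence Mazur records in \cite[p.~211]{Mazur_1979} for even modular symbols. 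The upshot is that $\Theta_D \bmod I$ is (up to a $p$-adic unit in $\mathbb{T}^0_{\mathfrak P}/I$) the algebraic part of $L(E_2,\chi_D,1)$.

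Next I would invoke the factorization $L(E_2,\chi_D,s) = L(\chi_D,s)\,L(\chi_D,s-1)$, so that $L(E_2,\chi_D,1) = L(\chi_D,1)\,L(\chi_D,0)$. Since $D>0$ the character $\chi_D$ is \emph{even}, hence $L(\chi_D,0) = -\tfrac1m\sum_a\chi_D(a)\,a$ vanishes identically: $\sum_{a}\chi_D(a)a = 0$ because $\chi_D(-1)=1$ makes the sum antisymmetric under $a\mapsto m-a$ up to the expected sign. Therefore $L(E_2,\chi_D,1)=0$, and feeding this back through the congruence of the previous paragraph gives $\Theta_D \equiv 0 \pmod{I\cdot H_1(X_0(N),\Z_p)^+}$, which is the claim. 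The second assertion is then immediate from Proposition \ref{prop_key_equivalence}.

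The step I expect to require the most care is making the congruence ``$\Theta_D \bmod I$ is computed by $E_2$'' genuinely precise at the level of the homology module $H_1(X_0(N),\Z_p)^+$ (not just numerically): one needs that $H_1(X_0(N),\Z)^+/I$ is the right size (a cyclic $\mathbb{T}^0/I$-module, by \cite[Proposition II.18.3]{Mazur_Eisenstein}) and that the boundary map sending $\{0,a/D\}$ to its ``Eisenstein part'' is correctly normalized, so that the vanishing of the Eisenstein $L$-value really does pin down membership in $I\cdot H_1$. Mazur carries this out in \cite{Mazur_1979}; the only subtlety for us is that we are in the even case and over $\Z_p$ rather than working integrally, but since $p\nmid D$ and $\mathbb{T}^0_{\mathfrak P}$ is a DVR the relevant denominators are $p$-adic units, so no new difficulty arises.
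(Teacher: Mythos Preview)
Your argument is correct in outline but takes a more circuitous route than the paper. The paper's proof is a two-line computation using Mazur's explicit description of the Eisenstein quotient of the plus homology: by \cite[Proposition II.18.8]{Mazur_Eisenstein} there is an isomorphism
\[
\alpha : H_1(X_0(N),\Z_p)^+/I \cdot H_1(X_0(N),\Z_p)^+ \xrightarrow{\sim} \Z/p\Z
\]
sending each symbol $\{0,b/d\}$ (with $\gcd(b,d)=\gcd(d,N)=1$) to $\log(d)$. Since every term in $\Theta_D$ is of the form $\{0,a/D\}$ with denominator $D$, one gets immediately
\[
\alpha(\Theta_D)=\sum_{a\in(\Z/D\Z)^\times}\chi_D(a)\cdot\log(D)=\log(D)\cdot\sum_a\chi_D(a)=0,
\]
the last step using only that $\chi_D$ is non-trivial. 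The $L$-value assertion then follows from Proposition~\ref{prop_key_equivalence} exactly as you say.

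Your route via the factorization $L(E_2,\chi_D,1)=L(\chi_D,1)L(\chi_D,0)$ and the vanishing of $L(\chi_D,0)$ for even $\chi_D$ is precisely the heuristic the paper sketches in the introduction, and it is what motivates the result; but turning that heuristic into a proof requires identifying the map $H_1^+/I\to\Z/p\Z$ concretely enough to see that the image of $\Theta_D$ vanishes, which is exactly what Mazur's formula $\{0,b/d\}\mapsto\log(d)$ accomplishes in one stroke. In other words, the ``step requiring the most care'' you flag is already packaged in \cite[Proposition II.18.8]{Mazur_Eisenstein}, and once you invoke it the Eisenstein $L$-value interpretation becomes unnecessary. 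Note also that the paper's computation uses only non-triviality of $\chi_D$, whereas your argument genuinely uses evenness; this is not a defect (evenness is assumed throughout), but it shows the paper's mechanism is slightly more robust at this step.
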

\begin{proof}
Mazur proved in \cite[Proposition II.18.8]{Mazur_Eisenstein} that we have an isomorphism $$\alpha : H_1(X_0(N), \Z_p)^+/I\cdot H_1(X_0(N), \Z_p)^+ \xrightarrow{\sim} \Z/p\Z$$ sending a modular symbol $\{0, \frac{b}{d}\}$ (with $\gcd(b,d)=\gcd(d,N)=1$) to $\log(d)$ (\cf Section \ref{section_notation} for the definition of $\log$). We have 
$$\alpha(\Theta_D) =\sum_{a \in (\Z/m\Z)^{\times}} \chi_D(a)\cdot \log(D) = 0 $$
since $\chi_D$ is non-trivial. This proves that $\Theta_D \in I \cdot H_1(X_0(N), \Z_p)^+$. The assertion about $\frac{L(\tilde{J}^{(p)}_{D},1)}{\Omega_D}$ follows from Proposition \ref{prop_key_equivalence}.
\end{proof}

We have thus proved the first assertion of Theorem \ref{main_thm} and Corollary \ref{main_cor}. In order to go further and study the divisibility of $\Theta_D$ by $I^2$, one will use results on a conjecture of Sharifi.

\section{Level compatibility in Sharifi's conjecture}\label{section_Sharifi} 

Let us recall some background on modular symbols and Sharifi's conjecture. For $M \in \mathbf{N}$, let $\Gamma_1(M) = \{\begin{pmatrix} a & b \\ c & d \end{pmatrix} \in \SL_2(\Z) \text{ such that } a-1 \equiv c \equiv 0 \text{ (modulo } M \text{)} \}$ and let $X_1(M)$ be the compact modular curve of level $\Gamma_1(M)$. 

Let $C_M = \Gamma_1(M) \backslash \mathbf{P}^1(\Q)$ be the set of cusps of $X_1(M)$, and $C_M^0$ be those cusps in $C_M$ of the form $\Gamma_1(M) \cdot \frac{a}{b}$ with $\gcd(a,b)=1$ and $a \not\equiv 0  \text{ (modulo } M \text{)}$ (in the case $b=0$ we have the cusp $\Gamma_1(M)\cdot \infty$). 

Let $H_1(X_1(M), C_M, \Z)$ be the singular homology of $X_1(M)$ relative to $C_M$. If $\alpha$ and $\beta$ are in $\mathbb{P}^1(\Q)$, let $\{\alpha, \beta\}$ be the class in $H_1(X_1(M), C_M, \Z)$ of the hyperbolic geodesic from $\alpha$ to $\beta$ in $X_1(M)$.

Let $$\xi_M : \Z[\Gamma_1(M) \backslash \SL_2(\Z)] \rightarrow H_1(X_1(M), C_M, \Z)$$ be the (modified) Manin map: it sends a coset $\Gamma_1(M) \cdot \begin{pmatrix} a & b \\ c & d \end{pmatrix}$ to $\{-\frac{d}{Mb}, -\frac{c}{Ma}\}$ (it is the usual Manin map composed with the Atkin--Lehner involution $W_M$). Manin showed that $\xi_M$ is surjective. 

Let $S_M^0 \subset \Gamma_1(M) \backslash \SL_2(\Z)$ be the subset consisting of $\Gamma_1(M) \cdot \begin{pmatrix} a & b \\ c & d \end{pmatrix}$ with $M \nmid c$ and $M \nmid d$. Note that $S_M^0$ can be identified with the set of pairs $[c,d]$ where $c,d \in \Z/M\Z - \{0\}$ modulo the identification $[c,d]=[-c,-d]$.
The restriction $$\xi_M^0 : \Z[S_M^0] \rightarrow H_1(X_1(M), C_M, \Z)$$ is surjective (\cf \cite[\S 2.1.3]{Sharifi_survey}). 

If $A$ is a commutative ring, let $K_2(A)$ be the second K-group of $A$, as defined by Quillen. Let $\zeta_M \in \overline{\Q}$ be a primitive $M$th root of unity. There is an action of $\Gal(\Q(\zeta_M)/\Q)$ (and in particular of the complex conjugation) on $K_2(\Z[\zeta_M, \frac{1}{M}])$. We denote by $\mathcal{K}_M$ the largest quotient of $K_2(\Z[\zeta_M, \frac{1}{M}]) \otimes \Z[\frac{1}{2}]$ on which the complex conjugation acts trivially. The map $\Z[S_M^0] \rightarrow \mathcal{K}_M$ sending $\Gamma_1(M)  \cdot \begin{pmatrix} a & b \\ c & d \end{pmatrix}$ to the Steinberg symbol $\langle 1 - \zeta_M^c, 1-\zeta_M^d \rangle$ factors through $\xi_M^0$ (\cf \cite[\S 2.1.4]{Sharifi_survey}), and thus induces a map
$$\varpi_M : H_1(X_1(M), C_M^0, \Z) \rightarrow \mathcal{K}_M \text{ .}$$

Sharifi conjectured that $\varpi_M$ is annihilated by the Hecke operators $T_{\ell}-\ell\langle \ell \rangle - 1$ for primes $\ell$ not dividing $M$ (\cf the remark after Theorem 4.3.6 in \cite{Sharifi_Venkatesh}). This conjecture has a history of partial results: \cite{Fukaya_Kato}, \cite{LW} and most recently \cite{Sharifi_Venkatesh} and \cite{LW_compatibility}. In particular, the restriction of $\varpi_M$ to $H_1(X_1(M), \Z)$ is known to be annihilated by $T_{\ell}-\ell\langle \ell \rangle - 1$ for primes $\ell$ not dividing $M$ (\cf \cite[Theorem 4.3.7]{Sharifi_Venkatesh}, where we warn the reader that they use \emph{usual} Manin symbols and \emph{dual} Hecke operators). If $M = N$ is prime, then a mild improvement on the techniques of Sharifi and Venkatesh shows that $\varpi_N$ (not restricted) is annihilated by the Hecke operators $T_{\ell}-\ell\langle \ell \rangle - 1$ for primes $\ell \neq N$ \cite[Remark 1.1 (v)]{LW_compatibility}. 

We shall make use of the following result of \cite{LW}.
\begin{thm}\label{thm_main_thm_LW}
We have a commutative diagram
\begin{center}
\begin{tikzcd}
H_1(X_1(N), \Z_p)^+ \arrow[r, "\varpi_{N}"] \arrow[d, "\pi"] & J\cdot (\mathcal{K}_N \otimes \Z_p) \arrow[d] \\
I\cdot H_1(X_0(N), \Z_p)^+/I^2\cdot H_1(X_0(N), \Z_p) \arrow[r, "\sim"] & J\cdot (\mathcal{K}_N \otimes \Z_p)/J^2\cdot (\mathcal{K}_N \otimes \Z_p),
\end{tikzcd}
\end{center}
where $\pi$ is the forgetful map (induced by $z\mapsto z$ on the upper-half plane), $J$ is the augmentation ideal of $\Z[\Gal(\Q(\zeta_N)/\Q)]$, the vertical arrows are surjective and the lower horizontal map is an isomorphism. (The right vertical arrow is the obvious projection.)
\end{thm}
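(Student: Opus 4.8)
The plan is to reduce the statement to an explicit computation with Manin symbols, using Mazur's Eisenstein-ideal structure theory for the left-hand column and the Hecke-equivariance of Sharifi's map for the rest. First I would pin down the bottom-left corner. By \cite[Proposition II.18.3]{Mazur_Eisenstein} the module $H_1(X_0(N),\Z_p)^+_{\mathfrak{P}}$ is free of rank one over the DVR $\mathbb{T}^0_{\mathfrak{P}}$, and since $p\mid\mid N-1$ the Eisenstein ideal $I$ localizes to the maximal ideal of $\mathbb{T}^0_{\mathfrak{P}}$, with residue ring $\Z/p\Z$ (combine \cite[Proposition II.9.7]{Mazur_Eisenstein} and \cite[Proposition II.19.1]{Mazur_Eisenstein}); hence $I\cdot H_1(X_0(N),\Z_p)^+/I^2\cdot H_1(X_0(N),\Z_p)\cong I/I^2\cong\Z/p\Z$, and multiplication by a uniformizer transports Mazur's isomorphism $\alpha\colon\{0,b/d\}\mapsto\log d$ of \cite[Proposition II.18.8]{Mazur_Eisenstein} to an explicit generator. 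I would \emph{not} try to identify the bottom-right group $J\cdot(\mathcal{K}_N\otimes\Z_p)/J^2\cdot(\mathcal{K}_N\otimes\Z_p)$ independently; rather, it will come out isomorphic to $\Z/p\Z$ as a byproduct of constructing the isomorphism in the statement (one also records, from the construction of $\varpi_N$, that its restriction to closed cycles lands in $J\cdot(\mathcal{K}_N\otimes\Z_p)$, so the top row makes sense).

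Next I would treat the two vertical maps. The right-hand one is the tautological projection, so it is surjective with nothing to prove. For the left-hand one I must first check that $\pi$ is well defined, i.e. that the forgetful pushforward $H_1(X_1(N),\Z_p)^+\to H_1(X_0(N),\Z_p)^+$ has image inside $I\cdot H_1(X_0(N),\Z_p)^+$: since $I\cdot H_1(X_0(N),\Z_p)^+=\ker\alpha$ by \cite[Proposition II.18.8]{Mazur_Eisenstein}, it suffices to see that $\alpha$ vanishes on the image of $\pi$, and this follows from the formula $\alpha(\{0,b/d\})=\log d$ together with the fact that an element of $H_1(X_1(N),\Z_p)^+$, being a \emph{closed} cycle at level $\Gamma_1(N)$, pushes forward to a combination of modular symbols whose contributions cancel in $\Z/p\Z$ — equivalently, $\alpha$ is, up to a unit, the $\Z/p\Z$-part of the obstruction class of the covering $X_1(N)\to X_0(N)$. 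Surjectivity of $\pi$ onto $\Z/p\Z$ is then immediate, since it suffices to exhibit one Manin symbol at level $\Gamma_1(N)$ whose image is a generator.

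The remaining content is the commutativity of the square: once it commutes, surjectivity of $\pi$ makes the bottom arrow a well-defined surjection, and its injectivity — hence the identification of the target with $\Z/p\Z$ — follows from comparing the images of a single Manin symbol representing a generator on each side. This is where the Hecke-equivariance of Sharifi's map enters: by \cite[Theorem 4.3.7]{Sharifi_Venkatesh} and the prime-level strengthening \cite[Remark 1.1 (v)]{LW_compatibility}, the operators $T_\ell-\ell\langle\ell\rangle-1$ for $\ell\neq N$ annihilate $\varpi_N$, so $\varpi_N$ factors through the localization of $H_1(X_1(N),\Z_p)^+$ at the Eisenstein maximal ideal, on which the forgetful map down to the rank-one module $H_1(X_0(N),\Z_p)^+_{\mathfrak{P}}$ is well understood. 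One then checks commutativity directly on Manin symbols: $\varpi_N$ sends $\xi_N^0([c,d])$ to the Steinberg symbol $\langle 1-\zeta_N^c,1-\zeta_N^d\rangle$, while $\pi$ sends the same symbol to a $\Gamma_0(N)$-modular symbol on which $\alpha$ computes $\log$ of the relevant denominator; the tame symbol at the prime $\mathfrak{n}$ above $N$, computed via $\frac{1-\zeta_N^c}{1-\zeta_N}\equiv c\pmod{\mathfrak{n}}$, supplies the dictionary matching the two up to a $p$-adic unit. The main obstacle — the step requiring genuine work rather than bookkeeping — is precisely this matching of normalizations: one must show that $\varpi_N$ descends \emph{exactly} along $\pi$, not merely up to an a priori unknown scalar, and for that the sharp form of the Hecke-equivariance at prime level, valid on the full relative homology $H_1(X_1(N),C_N^0,\Z)$ rather than only on closed cycles, is what is needed.
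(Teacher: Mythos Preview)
Your overall strategy --- Mazur's structure theory for the left column plus Hecke-equivariance of $\varpi_N$ to descend --- matches the paper's. But there is a genuine gap in your isomorphism argument, and your identification of the ``main obstacle'' is off.

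You write that ``surjectivity of $\pi$ makes the bottom arrow a well-defined surjection''. Surjectivity of $\pi$ gives only \emph{well-definedness}: every element downstairs lifts to $H_1(X_1(N),\Z_p)^+$, so the bottom arrow is determined by the top. But surjectivity of the bottom arrow needs surjectivity of $\varpi_N$ onto $J\cdot(\mathcal{K}_N\otimes\Z_p)$ (or at least modulo $J^2$), which you never establish. The paper invokes this explicitly (it is the content of \cite[Proposition 2.14 (b)]{LW}) and then, rather than avoiding an independent computation of $J\cdot\mathcal{K}_N/J^2\cdot\mathcal{K}_N$, computes it directly as $((\Z/N\Z)^\times)^{\otimes 2}\otimes\Z_p\cong\Z/p\Z$ via \cite[Remark 1.3]{LW}. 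With both sides known to be $\Z/p\Z$ and the map surjective, the isomorphism follows by cardinality --- no normalization matching is needed at all. Your proposed route, deducing the size of the $K_2$-side as a \emph{byproduct}, cannot close without one of these two inputs.

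Two smaller points. First, your argument that the image of $\pi$ lies in $I\cdot H_1(X_0(N),\Z_p)^+$ (``closed cycles at level $\Gamma_1$ push forward to combinations whose $\log d$ contributions cancel'') is a gesture, not a proof; the paper simply cites \cite[Lemma II.18.7]{Mazur_Eisenstein}, and the actual mechanism is that $\ker\pi = J\cdot H_1(X_1(N),\Z_p)^+$ \cite[Proposition 2.3 (ii)]{LW}. Second, the ``sharp Hecke-equivariance on the full relative homology'' you flag as the crux is not used here: this theorem concerns only closed cycles, where \cite[Theorem 4.3.7]{Sharifi_Venkatesh} already suffices. The relative-homology strengthening from \cite{LW_compatibility} enters later, in the level-raising step of Section~\ref{section_L_value}.
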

\begin{proof}
Let us explain how this result follows from \cite{LW}. By \cite[Proposition 2.3 (ii)]{LW}, the map $$\pi : H_1(X_1(N), \Z_p)^+ \rightarrow H_1(X_0(N), \Z_p)^+$$ has kernel $J \cdot H_1(X_1(N), \Z_p)^+$, where we identify $\Gal(\Q(\zeta_N)\Q)$ with $(\Z/N\Z)^{\times}$ (which acts by diamond operators on $H_1(X_1(N), \Z_p)^+$). 

By \cite[Proposition 2.12 (ii)]{LW} and \cite[Lemma 2.15]{LW}, the map $\varpi_N : H_1(X_1(N), \Z_p)^+ \rightarrow \mathcal{K}_N \otimes \Z_p$ takes values in $J\cdot (\mathcal{K}_N \otimes \Z_p)$. Finally, by \cite[Lemma II.18.7]{Mazur_Eisenstein}, the image of $\pi$ is $I \cdot H_1(X_0(N), \Z_p)^+$.

Thus, we have a commutative diagram 
\begin{center}
\begin{tikzcd}
H_1(X_1(N), \Z_p)^+ \arrow[r, "\varpi_{N}"] \arrow[d, "\pi"] & J\cdot (\mathcal{K}_N \otimes \Z_p) \arrow[d] \\
I\cdot H_1(X_0(N), \Z_p)^+ \arrow[r] & J\cdot (\mathcal{K}_N \otimes \Z_p)/J^2\cdot (\mathcal{K}_N \otimes \Z_p),
\end{tikzcd}
\end{center}
where the vertical arrows are surjective. 

The map $\varpi_{N} : H_1(X_1(N), \Z_p)^+ \rightarrow J\cdot (\mathcal{K}_N \otimes \Z_p)$ is surjective by the proof of \cite[Proposition 2.14 (b)]{LW}.  We know that the map $\varpi_N$ is annihilated by the Hecke operators $T_{\ell}-\ell\langle \ell \rangle - 1$ for primes $\ell \neq N$ and that $I$ is generated by the operators $T_{\ell}-\ell-1$ for $\ell \neq N$. 

Thus, the map $$I\cdot H_1(X_0(N), \Z_p)^+ \rightarrow J\cdot (\mathcal{K}_N \otimes \Z_p)/J^2\cdot (\mathcal{K}_N \otimes \Z_p)$$ factors through a surjective map $$I\cdot H_1(X_0(N), \Z_p)^+/I^2\cdot H_1(X_0(N), \Z_p)^+ \rightarrow J\cdot (\mathcal{K}_N \otimes \Z_p)/J^2\cdot (\mathcal{K}_N \otimes \Z_p) \text{ .}$$ By \cite[Remark 1.3]{LW}, we have a (canonical) isomorphism $$J\cdot (\mathcal{K}_N \otimes \Z_p)/J^2\cdot (\mathcal{K}_N \otimes \Z_p) \simeq ((\Z/N\Z)^{\times})^{\otimes 2} \otimes \Z_p \text{ .}$$ 

On the other hand, $I\cdot H_1(X_0(N), \Z_p)^+/I^2\cdot H_1(X_0(N), \Z_p)^+$ is also (canonically) isomorphic to $$J\cdot (\mathcal{K}_N \otimes \Z_p)/J^2\cdot (\mathcal{K}_N \otimes \Z_p) \simeq ((\Z/N\Z)^{\times})^{\otimes 2} \otimes \Z_p$$ (this is due to Mazur, see \cite[p. 2]{LW}). Therefore, the surjective map $$I\cdot H_1(X_0(N), \Z_p)^+/I^2\cdot H_1(X_0(N), \Z_p)^+ \rightarrow J\cdot (\mathcal{K}_N \otimes \Z_p)/J^2\cdot (\mathcal{K}_N \otimes \Z_p)$$ has to be an isomorphism. This concludes the proof of Theorem \ref{thm_main_thm_LW}.
\end{proof}

Another important aspect of Sharifi's theory is the way in which the maps $\varpi_M$ relate with each others when varying $M$. This has been studied under some assumptions in \cite{Fukaya_Kato} and \cite{Scott}. More recently, the authors \cite{LW_compatibility} extended these results using the novel techniques of Sharifi and Venkatesh. For the convenience of the reader, let us summarize the result of \cite{LW_compatibility} we shall need in the present paper.

\begin{thm}\cite[Theorem 1.4]{LW_compatibility} \label{thm_LW_compatibility}
Let $q\geq 2$ be a prime number and $M \geq 4$. Let $\pi_1, \pi_2 : X_1(Mq) \rightarrow X_1(M)$ be the two usual degeneracy maps, given respectively by $z\mapsto z$ and $z\mapsto qz$ on the upper-half plane. Let $C \subset C_{Mq}^0$ be a subset of cusps which are all in the same orbit under the action of $\Ker((\Z/Mq\Z)^{\times} \rightarrow (\Z/M\Z)^{\times})$ (the action being given by diamond operators). 

\begin{enumerate}
\item\label{main_thm_i} Assume that $q$ divides $M$. We have a commutative diagram
\begin{center}
\begin{tikzcd}
H_1(X_1(Mq), C, \Z) \arrow[r, "\varpi_{Mq}"] \arrow[d, "\pi_1"] & \mathcal{K}_{Mq} \arrow[d, "\Norm"] \\
H_1(X_1(M), \Z)\arrow[r, "\varpi_{M}"] &  \mathcal{K}_{M}.
\end{tikzcd}
\end{center}

\item\label{main_thm_ii} Assume that $q$ does not divide $M$. We have a commutative diagram
\begin{center}
\begin{tikzcd}
H_1(X_1(Mq), C, \Z) \arrow[r, "\varpi_{Mq}"] \arrow[d, "\pi_1 - \langle p \rangle \pi_2"] & \mathcal{K}_{Mq} \arrow[d, "\Norm"] \\
H_1(X_1(M), \Z)\arrow[r, "\varpi_{M}"] &  \mathcal{K}_{M}.
\end{tikzcd}
\end{center}
Here, $\langle q \rangle$ is the $q$th diamond operator, induced by the action of a matrix $\begin{pmatrix} a & b \\ c & d \end{pmatrix} \in \Gamma_0(M)$ with $d \equiv q \text{ (modulo }M\text{)}$ on $X_1(M)$.
\end{enumerate}

\end{thm}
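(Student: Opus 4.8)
Since this is \cite[Theorem 1.4]{LW_compatibility}, we only indicate the strategy.

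\textbf{Strategy.} The plan is to avoid manipulating the Steinberg symbols $\langle 1-\zeta_{Mq}^c,1-\zeta_{Mq}^d\rangle$ directly, and instead to factor $\varpi_M$ through a geometric construction for which the two diagrams become instances of the classical norm (distribution) relations for Siegel units. Concretely, I would recall, following Sharifi--Venkatesh, the cohomological incarnation of $\varpi_M$: it arises from a $K_2$-class on the open modular curve $Y_1(M)$ --- essentially a cup product of two Siegel units, living in $H^2_{\mathcal{M}}(Y_1(M),\Q(2))=K_2(Y_1(M))\otimes\Q$ --- by specialization at the cusps, where, through the Tate-curve parametrization, the Siegel units take cyclotomic values $1-\zeta_M^a$; the Atkin--Lehner twist built into $\xi_M$ records the precise normalization. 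One also identifies $\Norm:\mathcal{K}_{Mq}\to\mathcal{K}_M$ with the transfer in $K_2$ for the extension $\Q(\zeta_{Mq})/\Q(\zeta_M)$ (of degree $q$ when $q\mid M$ and $q-1$ when $q\nmid M$), passed to the complex-conjugation-invariant quotients.

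\textbf{Main steps.} The first and crucial step is the norm compatibility of the Siegel-unit classes under the degeneracy maps $\pi_1,\pi_2:Y_1(Mq)\to Y_1(M)$: when $q\mid M$, the pushforward of the level-$Mq$ class along $\pi_1$ is the level-$M$ class, whereas when $q\nmid M$ it is the level-$M$ class modified by an Euler factor --- this is exactly where the operator $\pi_1-\langle q\rangle\pi_2$ of \ref{main_thm_ii} comes from. On the cyclotomic units it reflects the distribution relation $\Norm_{\Q(\zeta_{Mq})/\Q(\zeta_M)}(1-\zeta_{Mq}^a)=1-\zeta_M^a$ for $q\mid M$ versus $=(1-\zeta_M^{qa})/(1-\zeta_M^a)$ for $q\nmid M$, which is the source of the asymmetry between \ref{main_thm_i} and \ref{main_thm_ii}. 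The second step is formal: using the adjunction between the pushforward $(\pi_1)_*$ on relative homology and the pullback $\pi_1^*$ on motivic cohomology, together with the projection formula $(\pi_1)_*(\alpha\cup\pi_1^*\beta)=(\pi_1)_*\alpha\cup\beta$, one moves the degeneracy-map pushforward across the construction and reduces both diagrams to the single assertion that cusp-specialization is compatible with the transfer on cyclotomic $K_2$. The third step verifies this last assertion: proper pushforward along $X_1(Mq)\to X_1(M)$ followed by specialization at the cusps induces precisely $\Norm$, and the hypothesis that the cusps in $C$ lie in a single orbit under $\Ker((\Z/Mq\Z)^\times\to(\Z/M\Z)^\times)$ ensures that the cusps of $X_1(Mq)$ over a given cusp of $X_1(M)$ assemble into a single transfer term rather than an unwanted sum, and that $\xi_M$ introduces no spurious diamond operator.

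\textbf{The main obstacle.} The genuine difficulty is the boundary bookkeeping. One must track how the subset $C$ of cusps behaves under both the pushforward on relative homology $H_1(X_1(Mq),C,\Z)\to H_1(X_1(M),\Z)$ and the transfer on $K_2$; how the Atkin--Lehner normalization inside $\xi_M$ interacts with $\pi_1$ and $\pi_2$ (this is what pins down the exact diamond operator $\langle q\rangle$ --- rather than another twist or a power of $q$ --- in \ref{main_thm_ii}); and the matching between the cusp-specialization of the Siegel-unit class and its algebraic description as a Steinberg symbol, precisely where $\varpi$ is most delicate. A purely combinatorial alternative --- writing $\pi_1$ and $\pi_2$ out explicitly on the Manin generators of $S_{Mq}^0$ and on relative homology and checking the two squares by hand --- is conceivable, but the action of the degeneracy maps on these index sets and on the relative homology is intricate enough that the cohomological route, as executed in \cite{LW_compatibility} building on \cite{Sharifi_Venkatesh}, is the cleaner path.
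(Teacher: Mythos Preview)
The paper does not prove this theorem at all: it is stated as a quotation of \cite[Theorem 1.4]{LW_compatibility}, introduced only with the sentence ``For the convenience of the reader, let us summarize the result of \cite{LW_compatibility} we shall need in the present paper.'' There is therefore no proof in the paper to compare your proposal against.

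Your sketch is a reasonable high-level outline of the Sharifi--Venkatesh-style argument that underlies \cite{LW_compatibility}: factor $\varpi_M$ through a motivic $K_2$-class built from Siegel units, use the norm/distribution relations for Siegel units under the degeneracy maps to explain the dichotomy between \ref{main_thm_i} and \ref{main_thm_ii}, and then match cusp-specialization with the $K_2$-transfer. You correctly flag the boundary bookkeeping around the cusps $C$ and the Atkin--Lehner normalization as the delicate points. Since the present paper supplies no argument of its own, your proposal is best read as a summary of the strategy one expects in the cited reference rather than as something to be checked against this paper.
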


\section{Congruence formula for the $L$-value}\label{section_L_value}

We now have all the tools we need to study the divisibility of $\Theta_D$ by $I^2$, and thus prove the equivalence between (\ref{main_thm_bis_1}) and (\ref{main_thm_bis_2}) in Theorem \ref{main_thm_bis} (and similarly for Corollary \ref{main_cor_bis} by Proposition \ref{prop_key_equivalence}). 

Recall that we have let  $$\Theta_D =  \sum_{a \in (\Z/D\Z)^{\times}} \chi_D(a)\cdot \{0, \frac{a}{D}\} \in H_1(X_0(N), \Z_p)^+$$ and that $\Theta_D \in I\cdot H_1(X_0(N), \Z_p)^+$. By Theorem \ref{thm_main_thm_LW}, we have $\Theta_D \in I^2\cdot H_1(X_0(N), \Z_p)^+$ if and only if $\varpi_N(\tilde{\Theta}_D) \in J^2\cdot  (\mathcal{K}_N \otimes \Z_p)$, where $$\tilde{\Theta}_D = \sum_{a \in (\Z/D\Z)^{\times}} \chi_D(a)\cdot \{0, \frac{a}{D}\}  \in H_1(X_1(N), \Z_p)^+$$ (note that the boundary of $\tilde{\Theta}_D$ is indeed zero by as the cusps $\frac{a}{D}$ are all equivalent in $X_1(N)$).

We now face the problem that $\tilde{\Theta}_D$ is not in any explicit way a linear combination of Manin symbols. We thus do not get a direct explicit formula for $\varpi_N(\tilde{\Theta}_D)$. But this can be resolved if we move to level $\Gamma_1(ND)$. Let $$\tilde{\Theta}_D'=\sum_{a \in (\Z/D\Z)^{\times}} \chi_D(a)\cdot \{0, \frac{a}{D}\} \in H_1(X_1(DN), C_{ND}^0, \Z_p)^+ \text{ .}$$ Since $\chi_D$ is a primitive character modulo $D$, one easily sees that Theorem \ref{thm_LW_compatibility} yields $$\Norm(\varpi_{DN}(\tilde{\Theta}_D')) = \varpi_N(\tilde{\Theta}_D)\text{ ,}$$ where $\Norm : \mathcal{K}_{ND} \otimes \Z_p \rightarrow \mathcal{K}_N \otimes \Z_p$ is the norm map.

We now are in a better shape, since $\tilde{\Theta}_D' =\sum_{a \in (\Z/D\Z)^{\times}} \chi_D(a) \cdot \xi_{ND}^0([-Na, 1])$. We thus get
\begin{equation}\label{eq_norm_Theta}
\varpi_N(\tilde{\Theta}_D) = \Norm\left( \langle \prod_{a \in (\Z/D\Z)^{\times}} (1-\zeta_D^{a})^{\chi_D(a)}, 1-\zeta_{ND} \rangle\right) \in J\cdot \mathcal{K}_N \text{ .}
\end{equation}

\begin{prop}\label{prop_Norm_K2_unit}
We have 
$$\Norm\left( \langle \prod_{a \in (\Z/D\Z)^{\times}} (1-\zeta_D^{a})^{\chi_D(a)}, 1-\zeta_{ND} \rangle\right)  \in J^2\cdot  (\mathcal{K}_N \otimes \Z_p)$$
if and only if $u(K)^{h(K)}$ is a $p$th power modulo a prime above $N$ in $K$.
\end{prop}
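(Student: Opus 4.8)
The plan is to unwind the norm map on $K_2$-symbols explicitly and translate the statement about $J^2\cdot(\mathcal{K}_N\otimes\Z_p)$ into a statement in étale cohomology, ultimately about $p$th powers of units. Recall that $\mathcal{K}_N\otimes\Z_p$ is built from $K_2(\Z[\zeta_N,1/N])\otimes\Z_p$, and that by \cite[Remark 1.3]{LW} the graded piece $J\cdot(\mathcal{K}_N\otimes\Z_p)/J^2\cdot(\mathcal{K}_N\otimes\Z_p)$ is identified with $((\Z/N\Z)^\times)^{\otimes 2}\otimes\Z_p$, hence with $\Z/p\Z$ after choosing the isomorphism $\log$. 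So the proposition amounts to computing the image of our Steinberg symbol under the map to $\Z/p\Z$ and showing it vanishes exactly when $u(K)^{h(K)}$ is a $p$th power mod (a prime above) $N$ in $K$.

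First I would make the norm map concrete. The symbol $\langle \prod_{a}(1-\zeta_D^a)^{\chi_D(a)},\, 1-\zeta_{ND}\rangle$ lives in $K_2(\Z[\zeta_{ND},1/(ND)])$; applying $\Norm$ to $\mathcal{K}_N$ means pushing forward along $\Z[\zeta_{ND},1/(ND)]\supset\Z[\zeta_N,1/N]$. The first entry $\beta := \prod_{a\in(\Z/D\Z)^\times}(1-\zeta_D^a)^{\chi_D(a)}$ is a classical object: by the analytic class number formula / Kummer's unit, $\beta$ is (up to sign and $p$-units) the Gauss-sum-normalized cyclotomic unit whose image in $K^\times\otimes\Z_p$ — under $\Q(\zeta_D)\supset K$ and the norm — is $u(K)^{h(K)}$ (this is where $D>0$, i.e. $K$ real, enters: for real $K$ the relevant cyclotomic unit is a power of the fundamental unit times a class-number factor; \cf Mazur's Step 1 heuristic in the introduction, with $L(\chi_D,0)$ replaced by the nonvanishing $L(\chi_D,-1)$-type data governing the real-quadratic unit). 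The second entry $1-\zeta_{ND}$ norms down along the cyclotomic tower; since $N$ splits in $K$, the primes above $N$ in $\Z[\zeta_N]$ restrict to primes above $N$ in $\mathcal{O}_K$, and $1-\zeta_N$ is a uniformizer there. Putting these together, $\Norm$ of the symbol should land, in the $J/J^2$-graded quotient, on a Steinberg-type pairing of $\beta$ (regarded in $\mathcal{O}_K^\times\otimes\Z_p$ or in the local units at $N$) against the class of $1-\zeta_N$; under the identification with $\Z/p\Z$ via $\log$, this pairing is nothing but the tame/Kummer symbol $(\beta, \text{unif. at }N)$, which measures whether $\beta$ — hence $u(K)^{h(K)}$ — is a $p$th power in the residue field at a prime above $N$.

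The key steps, in order: (i) recall from \cite[Remark 1.3]{LW} and Mazur the explicit description of $\mathcal{K}_N\otimes\Z_p$ modulo $J^2$ as $((\Z/N\Z)^\times)^{\otimes2}\otimes\Z_p\cong\Z/p\Z$, and of the Steinberg symbol $\langle 1-\zeta_N^c,1-\zeta_N^d\rangle$ in this quotient as $\log(c)\otimes\log(d)$ — more generally of a symbol $\langle v, 1-\zeta_N\rangle$ with $v$ an $N$-unit as the image of $v$ under the reduction $\mathcal{O}_K[1/N]^\times\to(\mathcal{O}_K/\mathfrak{N})^\times\to\Z/p\Z$; (ii) compute $\Norm$ of $1-\zeta_{ND}$ down to $\Z[\zeta_N]$, getting (a power of, coprime to $p$) $1-\zeta_N$, using that $\gcd(D,Np)=1$ so the extension $\Q(\zeta_{ND})/\Q(\zeta_N)$ is unramified at $N$; (iii) identify $\beta=\prod_a(1-\zeta_D^a)^{\chi_D(a)}$ up to $p$-units, after norming from $\Q(\zeta_D)$ to $K$, with $u(K)^{h(K)}$ — this is the analytic-class-number-formula input for the real quadratic field $K$ and the one genuinely arithmetic ingredient; (iv) assemble: the norm of the symbol equals, modulo $J^2$, the image of $\beta$ (equivalently $u(K)^{h(K)}$) in $\Z/p\Z$ under reduction at a prime above $N$, so it lies in $J^2\cdot(\mathcal{K}_N\otimes\Z_p)$ iff that image is $0$, i.e. iff $u(K)^{h(K)}$ is a $p$th power mod that prime.

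The main obstacle I anticipate is step (ii)--(iv) compatibility: carefully tracking the norm maps on $K_2$ through the non-Galois situation (the symbol is built at level $ND$ but we only control the $J/J^2$-graded quotient at level $N$), and in particular verifying that the projection to the graded piece is compatible with $\Norm$ so that we may compute after reducing mod $J^2$ rather than needing the full norm. One must also be careful that the auxiliary factors $(1-\zeta_{ND}$ versus $1-\zeta_N)$ and the diamond-operator twists in Theorem \ref{thm_LW_compatibility}(\ref{main_thm_i})/(\ref{main_thm_ii}) — note $D$ is coprime to $N$ so case (\ref{main_thm_ii}) applies prime-by-prime — do not contribute units divisible by $p$, which uses $p\nmid D$ crucially. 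Once the bookkeeping is in place, the vanishing criterion is exactly the Kummer condition on $u(K)^{h(K)}$ modulo $N$, and the proposition follows. I would organize the argument around a single lemma computing $\Norm\langle v,1-\zeta_{ND}\rangle \bmod J^2$ for an arbitrary $D$-unit $v$, then specialize to $v=\beta$.
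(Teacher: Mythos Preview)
Your overall plan --- identify the $J/J^2$-graded piece with $\Z/p\Z$, then show that the image of the normed symbol there records whether $u(K)^{h(K)}$ is a $p$th power modulo a prime above $N$ --- is the right endpoint. But step (ii) of your outline has a genuine gap, and it is where the real work of the proof lives.

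The projection formula for the norm in $K_2$ says $\Norm_{L/F}\langle x,y\rangle = \langle x,\Norm_{L/F}(y)\rangle$ only when $x\in F^\times$. Here the first entry $\beta=\prod_a(1-\zeta_D^a)^{\chi_D(a)}$ lies in $K^\times\subset\Q(\zeta_D)^\times$, and $K\cap\Q(\zeta_N)=\Q$; so $\beta\notin\Q(\zeta_N)$ and you cannot simply ``norm the second entry down to $\Z[\zeta_N]$''. (The claim that this norm is a $p$-unit power of $1-\zeta_N$ is also false on its face: reducing modulo a prime above $N$ one has $\zeta_N\equiv 1$, hence $1-\zeta_{ND}\equiv 1-\zeta_D^{v}$ is an $N$-unit, so $\Norm_{\Q(\zeta_{ND})/\Q(\zeta_N)}(1-\zeta_{ND})$ is a unit at $N$, not a uniformizer.)

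The paper's proof circumvents this by going \emph{up} rather than down: one composes $\Norm$ with the functorial map $\iota:\mathcal{K}_N\to\mathcal{K}_{ND}$, so that $\iota\circ\Norm$ is the Galois average over $\Gal(\Q(\zeta_{ND})/\Q(\zeta_N))$. Since $\beta\in K$ satisfies $\sigma_b(\beta)=\beta^{\chi_D(b)}$, this average collapses to a single symbol $\langle\beta,\;\prod_b(1-\zeta_N^u\zeta_D^{b})^{\chi_D(b)}\rangle$ in $K_2(K(\zeta_N^+))$. Now the second entry has norm $1$ from $K(\zeta_N)$ to $K$ (here one uses $\chi_D(N)=1$), and Hilbert $90$ writes it as $\sigma(y)/y$. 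The symbol is therefore $(\sigma-1)\cdot\langle\beta,y\rangle$, and the question of $J^2$-divisibility becomes whether the tame residue at a prime $\mathfrak{N}\mid N$ of $\langle\beta,y\rangle$ vanishes in $\mathbf{F}_N^\times\otimes\Z_p$. Two further points are needed: a lemma showing this residue is independent of the Hilbert-$90$ choice of $y$ (the $J$-invariants of $K_2(K(\zeta_N^+))\otimes\Z_p$ have trivial residue), and a short local computation showing that $v_{\mathfrak{N}}(y)\bmod p$ is itself determined by $\beta\bmod\mathfrak{N}$, via $\sigma(y)/y\equiv g^{v_{\mathfrak{N}}(y)}$ where $g$ generates $(\Z/N\Z)^\times$. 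The residue is then $\beta^{v_{\mathfrak{N}}(y)}\bmod\mathfrak{N}$, and since $\beta=u(K)^{\pm h(K)}$ by the class number formula, the vanishing criterion is exactly the one claimed.

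In short: your proposed lemma ``$\Norm\langle v,1-\zeta_{ND}\rangle\bmod J^2$ for $v$ a $D$-unit'' cannot be proved by a projection-formula shortcut; the Hilbert-$90$ step and the residue-independence lemma are the missing ideas.
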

\begin{proof}
Let $\iota : \mathcal{K}_N \otimes \Z_p \rightarrow \mathcal{K}_{ND} \otimes \Z_p$ be the functorial map coming from the inclusion $\Q(\zeta_N) \subset \Q(\zeta_{ND})$. If $x \in \mathcal{K}_{ND}$, then we have $$\iota \circ \Norm (x) = \sum_{g \in \Gal(\Q(\zeta_{ND})/\Q(\zeta_N))} g\cdot x \text{ .}$$ 

Write $\zeta_{ND} = \zeta_N^u \cdot \zeta_D^v$ for some $u, v \in \Z$ with $uD+vN=1$.
We thus get:
\begin{align*}
\iota \circ & \Norm \left( \langle \prod_{a \in (\Z/D\Z)^{\times}}  (1-\zeta_D^{a})^{\chi_D(a)}, 1-\zeta_{ND} \rangle\right) \\& = \sum_{b \in (\Z/D\Z)^{\times}} \langle \prod_{a \in (\Z/D\Z)^{\times}} (1-\zeta_D^{ab})^{\chi_D(a)}, 1-\zeta_N^u \zeta_D^{vb} \rangle 
\\& = \langle \prod_{a \in (\Z/D\Z)^{\times}} (1-\zeta_D^{a})^{\chi_D(a)}, \prod_{b \in (\Z/D\Z)^{\times}} (1-\zeta_N^u \zeta_D^{vb})^{\chi_D(b)} \rangle 
\\& =\chi_D(v)\cdot \langle \prod_{a \in (\Z/D\Z)^{\times}} (1-\zeta_D^{a})^{\chi_D(a)}, \prod_{b \in (\Z/D\Z)^{\times}} (1-\zeta_N^u \zeta_D^{b})^{\chi_D(b)} \rangle 
\end{align*}
We compute $\Norm_{\Q(\zeta_{ND})/\Q(\zeta_D)}(1-\zeta_N^u \zeta_D^{b}) = \frac{1-\zeta_D^{Nb}}{1-\zeta_D^b}$. Since $\chi_D(N)=1$ by assumption, we get  $$\Norm_{\Q(\zeta_{ND})/\Q(\zeta_D)}  \prod_{b \in (\Z/D\Z)^{\times}} (1-\zeta_N^u \zeta_D^{b})^{\chi_D(b)}  = 1 \text{ .}$$

Note also that  $$\prod_{b \in (\Z/D\Z)^{\times}} (1-\zeta_N^u \zeta_D^{b})^{\chi_D(b)}  \in K(\zeta_N)$$ and $$\prod_{a \in (\Z/D\Z)^{\times}} (1-\zeta_D^{a})^{\chi_D(a)} \in K \text{ .}$$ By Hilbert 90, there exists $y \in K(\zeta_N)^{\times}$ such that $$\prod_{b \in (\Z/D\Z)^{\times}} (1-\zeta_N^u \zeta_D^{b})^{\chi_D(b)}  = \frac{\sigma(y)}{y} \text{ ,}$$ where $\sigma \in \Gal(K(\zeta_N)/K) \simeq (\Z/N\Z)^{\times}$ is a fixed generator. We thus get
\begin{equation}\label{eq_norm_iota}
\iota \circ \Norm \left( \langle \prod_{a \in (\Z/D\Z)^{\times}}  (1-\zeta_D^{a})^{\chi_D(a)}, 1-\zeta_{ND} \rangle\right) = (\sigma -1)\cdot \alpha \text{ ,}
\end{equation}
where $\alpha \in K_2(K(\zeta_N^+)) \otimes \Z_p$ is given by $$\alpha = \chi_D(v)\cdot \langle \prod_{a \in (\Z/D\Z)^{\times}} (1-\zeta_D^{a})^{\chi_D(a)}, y \rangle \text{ .}$$ Here, $K(\zeta_N^+)$ is the totally real subfield of $K(\zeta_N)$ and by $\langle \prod_{a \in (\Z/D\Z)^{\times}} (1-\zeta_D^{a})^{\chi_D(a)}, y \rangle$ we mean the projection of the same Steinberg symbol on the fixed part by the complex conjugation. 

Such an $\alpha$ is unique up to an element in $(K_2(K(\zeta_N^+)) \otimes \Z_p) [J]$ (where we view $J$ as the augmentation ideal of $\Z[\Gal(K(\zeta_N)/K)]$).
\begin{lem}\label{lem_J_inv_residue}
Let $\partial : K_2(K(\zeta_{N}^+)) \otimes \Z_p \rightarrow \prod_{\mathfrak{q}} \mathbf{F}_{\mathfrak{q}}^{\times} \otimes \Z_p$ be the tame residue symbol map, where $\mathfrak{q}$ runs through prime divisors of $N$ in the ring of integers of $K(\zeta_{N}^+)$. Then the restriction of $\partial$ to $(K_2(K(\zeta_{N}^+)) \otimes \Z_p) [J]$ is the zero map.
\end{lem}
\begin{proof}
We claim that $(K_2(K(\zeta_{N}^+)) \otimes \Z_p) [J]$ is the image of $K_2(K) \otimes \Z_p$ in $K_2(K(\zeta_{N}^+)) \otimes \Z_p$ (via the functorial map). By \cite[Theorem 5.4]{Tate_K_2}, the \'etale Chern class map induces an injection $$K_2(K(\zeta_{N}^+)) \otimes \Z_p \hookrightarrow H^2(K(\zeta_{N}^+), \Z_p(2))$$ whose image is the torsion group of $H^2(K(\zeta_{N}^+), \Z_p(2))$. Actually, $H^2(K(\zeta_{N}^+), \Z_p(2))$ is a torsion group, since $H^2(\mathcal{O}_S, \Z_p(2))$ is finite and we have an exact sequence
$$0 \rightarrow H^2(\mathcal{O}_S, \Z_p(2)) \rightarrow H^2(\Q(\zeta_{N}^+), \Z_p(2)) \rightarrow \oplus_{\mathfrak{p} \not\in S} \mathbf{F}_{\mathfrak{p}}^{\times} \rightarrow 0\text{ ,}$$
where $\mathcal{O}_S$ is the ring of $S$-integers of $K(\zeta_{N}^+)$ for some finite set of places $S$. The Chern class map is therefore an isomorphism. Thus, to prove the above claim, it suffices to prove that the restriction map $$H^2(K, \Z_p(2)) \rightarrow H^2(K(\zeta_{N}^+), \Z_p(2))[J]$$ is surjective.  By \cite[Proposition 2.9]{Kolster}, it is enough to check that $$H^1(K(\zeta_{N}^+), \Z_p(2))=0 \text{ .}$$This follows from \cite[Remark 46]{Weibel}.

To conclude the proof of the Lemma, simply note that the following diagram is commutative:
\begin{center}
\begin{tikzcd}
K_2(K) \otimes \Z_p \arrow[r, "\partial"] \arrow[d] & \prod_{\mathfrak{q}' \mid N} \mathbf{F}_{\mathfrak{q}'}^{\times} \otimes \Z_p\arrow[d, "0"] \\
K_2(K(\zeta_{N}^+)) \otimes \Z_p \arrow[r, "\partial"] & \prod_{\mathfrak{q} \mid N} \mathbf{F}_{\mathfrak{q}}^{\times}  \otimes \Z_p.
\end{tikzcd}
\end{center}
This follows from the definition of the tame residue symbol and the fact that the ramification index of a prime $\mathfrak{q}' \mid N$ of $K$ in $K(\zeta_{N}^+)$ in divisible by $p$ (note that we have $\mathbf{F}_{\mathfrak{q}} = \mathbf{F}_{\mathfrak{q}'} = \mathbf{F}_N$ by assumption). 
\end{proof}

By Lemma \ref{lem_J_inv_residue}, all  $\alpha \in K_2(K(\zeta_N^+)) \otimes \Z_p$ satisfying (\ref{eq_norm_iota}) have the same image under the residue map in $\mathbf{F}_N^{\times} \otimes \Z_p$. We know that $$ \Norm \left( \langle \prod_{a \in (\Z/D\Z)^{\times}}  (1-\zeta_D^{a})^{\chi_D(a)}, 1-\zeta_{ND} \rangle\right) = (\sigma -1)\cdot x$$ for some $x \in K_2(\Z[\zeta_N, \frac{1}{N}])$. We have 
$$\Norm\left( \langle \prod_{a \in (\Z/D\Z)^{\times}} (1-\zeta_D^{a})^{\chi_D(a)}, 1-\zeta_{ND} \rangle\right)  \in J^2\cdot  (\mathcal{K}_N \otimes \Z_p)$$
if and only if $x \in J \cdot K_2(\Z[\zeta_N, \frac{1}{N}])$, if and only if the residue of $x$ in $\mathbf{F}_N^{\times} \otimes \Z_p$ is zero (\cf \cite[Proposition 2.12]{LW}). By the above, this happens if and only if the residue of $$\langle \prod_{a \in (\Z/D\Z)^{\times}} (1-\zeta_D^{a})^{\chi_D(a)}, y \rangle$$ is zero in $\mathbf{F}_N^{\times} \otimes \Z_p$. 

Notice that the class number formula for $K$ is equivalent to $$\abs{u(K)}^{ \pm h(K)} =  \prod_{a \in (\Z/D\Z)^{\times}} (1-\zeta_D^{a})^{\chi_D(a)}$$ (for some sign $\pm$ depending on the Gauss sum $\mathcal{G}(\chi_D) = \pm \sqrt{D}$). Therefore, the residue of $$\langle \prod_{a \in (\Z/D\Z)^{\times}} (1-\zeta_D^{a})^{\chi_D(a)}, y \rangle$$ at a prime $\mathfrak{N}$ above $N$ in $K$ is $$(u(K)^{ \pm h(K)})^{v_{\mathfrak{N}}(y)} \text{ modulo }\mathfrak{N} \text{ ,}$$ where $v_{\mathfrak{N}}(y)$ is the $\mathfrak{N}$-adic valuation of $y$. 

Let $K(\zeta_N)_{\mathfrak{N}}$ be the $\mathfrak{N}$-adic completion of $K(\zeta_N)$. The element $1-\zeta_N$ is a uniformizer at $\mathfrak{N}$, so one may write $$y = u \cdot (1-\zeta_N)^{v_{\mathfrak{N}}(y)}$$ for some local unit $u$. 

Say our generator $\sigma$ of $\Gal(\Q(\zeta_N)/\Q)$ corresponds to a generator $g$ of $(\Z/N\Z)^{\times}$. We then have 
$$\frac{\sigma(y)}{y} \equiv \left(\frac{1-\zeta_N^g}{1-\zeta_N}\right)^{v_{\mathfrak{N}}(y)} \equiv g^{v_{\mathfrak{N}}(y)} \text{ (modulo } \mathfrak{N} \text{).}$$
On the other hand, by definition we have 
$$\frac{\sigma(y)}{y} \equiv  \prod_{a \in (\Z/D\Z)^{\times}} (1-\zeta_D^{a})^{\chi_D(a)} \equiv u(K)^{ \pm h(K)}  \text{ (modulo } \mathfrak{N} \text{).}$$
Therefore, $p$ divides $v_{\mathfrak{N}}(y)$ if and only if $u(K)^{ \pm h(K)} $ is a $p$th power modulo $\mathfrak{N}$. This concludes the proof of Proposition \ref{prop_Norm_K2_unit}.
\end{proof}

In conclusion, we have proven the equivalence of (\ref{main_thm_bis_1}) and (\ref{main_thm_bis_2}) in Theorem \ref{main_thm_bis}.

\section{The $N$-adic uniformization of $J_0(N)$}\label{section_uniformization}

The goal of this section is to prove the following result, which will be crucial to understand the image of the local Kummer maps defining our Selmer group.
Our proof relies on the explicit description of $\tilde{J}^{(p)}(\Q_N)$ based on results of Ehud de Shalit \cite{deShalit}. 

\begin{prop}\label{thm_uniformization}
We have a group isomorphism
$$\left(\tilde{J}^{(p)}(\Q_N)/I\cdot \tilde{J}^{(p)}(\Q_N)\right) \otimes \Z_p \simeq (\Z/p\Z)^2 \text{ .}$$
\end{prop}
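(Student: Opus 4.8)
The plan is to use de Shalit's $N$-adic (Čerednik--Drinfeld-style) uniformization of $J_0(N)$ over $\Q_N$, which expresses $J_0(N)(\Q_N)$ in terms of the character group and component group of the toric reduction at $N$, together with the fact that $X_0(N)$ has purely toric reduction modulo $N$ (the special fiber is two rational curves meeting at the supersingular points). Concretely, the Néron model $\mathcal{J}$ of $J_0(N)$ over $\Z_N$ sits in an exact sequence $0 \to \mathcal{J}^0 \to \mathcal{J} \to \Phi_N \to 0$ where $\Phi_N$ is the component group and $\mathcal{J}^0$ is a torus over $\Z_N$ whose character group $X$ is the free abelian group on supersingular points of degree zero (equivalently, via the Eichler--Shimura/graph description, $H_1$ of the dual graph). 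De Shalit's results give an explicit rigid-analytic uniformization $J_0(N)(\Q_N) \cong (\Q_N^\times \otimes X^\vee \oplus \text{(lattice part)})/X$ — I would cite the precise statement from \cite{deShalit} — and the point is that all the relevant modules here (the character group $X$, the lattice $\Hom(X,\Z)$, and $\Phi_N$) are modules over the Hecke algebra $\mathbb{T}^0$, so one can base change along $\mathbb{T}^0 \to \mathbb{T}^0/I \to \mathbb{T}^0_{\mathfrak{P}}$ and take the $\otimes \Z_p$ quotient by $I$.

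The key steps, in order, would be: (1) Recall de Shalit's uniformization and reduce the computation of $\tilde{J}^{(p)}(\Q_N)/I \otimes \Z_p$ to a computation with the character group $X$ and the component group $\Phi_N$ after passing to the $p$-Eisenstein quotient, i.e. tensoring everything with $\mathbb{T}^0/\gamma_{\mathfrak{P}}$. Here one uses that $p \nmid c_A$-type integrality (or rather that $\gamma_{\mathfrak{P}}$ acts well on the Néron model) so that the uniformization of $\tilde{J}^{(p)}$ is obtained by pushing out de Shalit's sequence. (2) Identify the relevant $I$-torsion/cotorsion: by Mazur's work \cite{Mazur_Eisenstein} the character group $X$ and the component group $\Phi_N$ of $J_0(N)$ are, after completion at $\mathfrak{P}$, each cyclic over $\mathbb{T}^0_{\mathfrak{P}}$ — indeed $\Phi_N \cong \mathbb{T}^0/I$ (the "rational Eisenstein" statement, \cite[Proposition II.11.11]{Mazur_Eisenstein} and around there) and the $\mathfrak{P}$-completed character group is free of rank one over $\mathbb{T}^0_{\mathfrak{P}}$. (3) Plug these into the uniformization sequence: modulo $I$ and tensored with $\Z_p$, the torus part $\Q_N^\times \otimes X^\vee$ contributes $(\Q_N^\times / I \otimes \Z_p)$-worth of stuff which, since $X^\vee/I \otimes \Z_p \cong \Z_p$ and $\Q_N^\times \otimes \Z_p \cong \Z_p$ (generated by $N$, as $\Q_N^\times \cong N^\Z \times \Z_N^\times$ and $\Z_N^\times$ is pro-($N-1$)-times-pro-$N$, so $\otimes \Z_p$ kills the prime-to-$p$ part but $p \mid N-1$ contributes a $\Z/p$ from the units!), needs care — I expect $\Q_N^\times \otimes \Z_p$ to be $\Z_p \oplus \Z/p\Z$ (the $\Z_p$ from $\operatorname{ord}_N$ and the $\Z/p\Z$ from $\mu_{p} \subset \Z_N^\times$ since $p \mid N-1$), and one factor of $\Z/p\Z$ in the answer comes from here while the other comes from the component group $\Phi_N/I \otimes \Z_p \cong \Z/p\Z$. (4) Check that the quotient by the lattice $X$ and the connecting maps do not change the count, i.e. the $I$-adically reduced sequence stays short exact after $\otimes \Z_p$ — this is where a snake-lemma argument plus vanishing of the relevant $\mathrm{Tor}$'s (using that $\mathbb{T}^0_{\mathfrak{P}}$ is a DVR and the modules are cyclic, hence the $I$-torsion of each piece is controlled) is needed.

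The main obstacle I anticipate is step (4) together with pinning down the exact contribution of the unit part $\Z_N^\times \otimes \Z_p$ in de Shalit's uniformization: one must be careful about whether the rigid-analytic period lattice lands in $N^{\Z} \otimes X^\vee$ only or sees the units, and about the interaction between the "lattice part" of de Shalit's description and the toric part when one reduces modulo $I$ — a priori there could be cancellation lowering the rank, or the connecting homomorphism $\Phi_N \to (\text{torus part})$ could be nonzero modulo $I$. Resolving this requires invoking de Shalit's precise formulas for the monodromy pairing and Mazur's computation that the relevant Eisenstein-quotient monodromy pairing is "as nondegenerate as possible" modulo $\mathfrak{P}^2$ (related to \cite[Chapter II]{Mazur_Eisenstein} and the fact that $\mathbb{T}^0_{\mathfrak{P}}$ is ramified of degree $g_p$). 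Once the structure of $X_{\mathfrak{P}}$, $\Phi_{N,\mathfrak{P}}$ as $\mathbb{T}^0_{\mathfrak{P}}$-modules and of $\Q_N^\times \otimes \Z_p$ are all in hand, the final count $(\Z/p\Z)^2$ should drop out of a diagram chase.
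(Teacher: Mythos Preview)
Your overall strategy is the same as the paper's: use de Shalit's rigid-analytic uniformization $J_0(N)(\Q_N) \cong \Hom(N_0,\Q_N^\times)/q_0(N_0)$ (with $N_0=\Z[S]^0$ the degree-zero divisors on supersingular points), exploit that $N_0$ and its dual are free of rank one over $\mathbb{T}^0_{\mathfrak{P}}$ (Mazur, Emerton), and then compute $\Q_N^\times\otimes\Z/p\Z\cong(\Z/p\Z)^2$. Your identification of the two $\Z/p\Z$'s as ``one from the units of $\Z_N$, one from the component group $\Phi_N$'' is a correct and equivalent decomposition---under the valuation map the ``$N^{\Z}$-part'' of $\Q_N^\times\otimes\Z/p\Z$ goes isomorphically onto $\Phi_N\otimes\Z/p\Z$. (Minor point: there is no ``$\oplus$ lattice part'' in the uniformization; it is simply $\Hom(X,\Q_N^\times)/q_0(X)$.)

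The genuine gap is exactly your step (4), and the resolution is \emph{not} a nondegeneracy statement for the monodromy pairing---it is the opposite. The paper's key lemma is that the period lattice already lies in $I$ times the ambient group:
\[
q_0(N_0)\;\subset\; I\cdot \Hom(N_0,\Q_{N^2}^\times).
\]
This follows from two facts you did not invoke: de Shalit extends $q_0$ Hecke-equivariantly to a map $q:\Z[S]\to\Hom(\Z[S],\Q_{N^2}^\times)$ on the full supersingular module, and Emerton proves $N_0 = I\cdot\Z[S]$. Hence $q_0(N_0)=q(I\cdot\Z[S])=I\cdot q(\Z[S])$. Once you have this, there is no snake lemma or Tor computation to do: modulo $I$ the lattice $q_0(N_0)$ simply vanishes, giving directly
\[
\bigl(\tilde{J}^{(p)}(\Q_N)/I\bigr)\otimes\Z_p \;\cong\; \bigl(\Hom(N_0,\Q_N^\times)/I\bigr)\otimes\Z_p \;\cong\; \Q_N^\times\otimes_{\Z}\mathbb{T}^0_{\mathfrak{P}}/I \;\cong\; (\Z/p\Z)^2.
\]
So your anticipated obstacle---``cancellation'' or a nonzero connecting map when reducing the uniformization sequence mod $I$---is dissolved by the observation that the period map is \emph{zero} mod $I$, not merely nondegenerate. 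Without $N_0=I\cdot\Z[S]$ (or an equivalent input), your step (4) does not close.
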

\begin{proof}
Let us recall some basic results regarding the $N$-adic uniformization of $J_0(N)$. We shall use the results of \cite{Lecouturier_MT}, but we emphasise that the results we shall use follow easily from de Shalit's work \cite{deShalit}. Let us note that our $N$ and $p$ correspond to $p$ and $\ell$ respectively in \cite{Lecouturier_MT}.

Let $S$ be the (finite) set of supersingular points in characteristic $N$ of $X_0(N)$, \ie the set of supersingular elliptic curves over $\overline{\mathbf{F}}_N$ up to isomorphism. Let $N_0 = \Z[S]^0$ be the subgroup of degree zero elements in $\Z[S]$. There is a natural action of $\mathbb{T}^0$ on $N_0$ (where we recall that $\mathbb{T}^0$ is the cuspidal Hecke algebra of weight $2$ and level $\Gamma_0(N)$ over $\Z$). 

We have a $\mathbb{T}^0$-equivariant isomorphism
$$
J_0(\Q_N) \simeq \Hom(N_0, \Q_N^{\times})/q_0(N_0)\text{ ,}
$$
where $$q_0 : N_0 \rightarrow \Hom(N_0, \Q_N^{\times})$$ is a certain injective homomorphism (\cf \cite[\S 1.3]{Lecouturier_MT}). 

We shall need the following
\begin{fact}
$$q_0(N_0) \subset I \cdot \Hom(N_0, \Q_{N^2}^{\times})  \text{ .}$$ 
\end{fact}
\begin{proof} (Fact) In \cite{deShalit_pairing} (\cf also \cite[\S 1.3]{Lecouturier_MT}), de Shalit proved that there is a homomorphism $q : \Z[S] \rightarrow \Hom(\Z[S], \Q_{N^2}^{\times})$ inducing $q_0$. By \cite[Proposition 1.3 (ii)]{Lecouturier_MT}, $q$ is $\mathbb{T}$-equivariant (where $\mathbb{T}$ is the full Hecke algebra acting faithfully on $\Z[S]$). Thus, the fact follows from the equality
\begin{equation}\label{eq_N_0_I_N}
N_0 = I\cdot \Z[S] \text{ .}
\end{equation}
We obviously have $I \cdot \Z[S] \subset N_0$ (\cf \cite[Theorem 3.1 (i)]{Emerton}). Since $\Z[S]/N_0 = \Z$ (via the degree map), (\ref{eq_N_0_I_N}) follows if one can show that $\Z[S]/I\cdot \Z[S] \simeq \Z$ (the right hand side has an action of $\mathbb{T}$ via $\mathbb{T}/I \simeq \Z$). It suffices to check this after completion at every maximal ideal $\mathfrak{m}$ of $\mathbb{T}$. If $\mathfrak{m}$ is not Eisenstein, \ie if $I$ is not contained in $\mathfrak{m}$, this is clear. If $\mathfrak{m}$ is Eisenstein, this follows from the fact that $\Z[S]$ is locally free at $\mathfrak{m}$ \cite[Theorem 4.2]{Emerton}.
\end{proof}

This proves that 
\begin{equation}\label{eq_J_0_unif}
\left(\tilde{J}^{(p)}(\Q_N)/I\cdot \tilde{J}^{(p)}(\Q_N)\right) \otimes \Z_p \simeq  \left( \Hom(N_0, \Q_N^{\times}) /I \cdot \Hom(N_0, \Q_N^{\times}) \right) \otimes \Z_p \text{ .} 
\end{equation}

By \cite[Corollary II.16.3]{Mazur_Eisenstein} and \cite[Theorem 0.5]{Emerton}, the $\mathbb{T}_{\mathfrak{P}}^0$-module $\Hom(N_0, \Z_p) \otimes_{\mathbb{T}}\mathbb{T}_{\mathfrak{P}}^0$ is free of rank one. Since
$$\left( \Hom(N_0, \Q_N^{\times}) /I \cdot \Hom(N_0, \Q_N^{\times}) \right) \otimes \Z_p \simeq   \Hom(N_0 , \Q_N^{\times}) \otimes_{\mathbb{T}^0} \mathbb{T}_{\mathfrak{P}}^0/I \simeq \Q_N^{\times} \otimes_{\Z} \mathbb{T}_{\mathfrak{P}}^0/I $$
and $\Q_N^{\times} \otimes_{\Z} \Z_p \simeq (\Z/p\Z)^2$, we get
$$\left( \Hom(N_0, \Q_N^{\times}) /I \cdot \Hom(N_0, \Q_N^{\times}) \right) \otimes \Z_p \simeq \mathbb{T}_{\mathfrak{P}}^0/I  \otimes_{\Z} (\Z/p\Z)^2  \simeq (\Z/p\Z)^2 \text{ .}$$
By (\ref{eq_J_0_unif}), we get 
$$\left(\tilde{J}^{(p)}(\Q_N)/I\cdot \tilde{J}^{(p)}(\Q_N)\right) \otimes \Z_p \simeq (\Z/p\Z)^2 $$
as wanted.
\end{proof}

\section{Selmer group computation}\label{section_selmer group computation}
The goal of this section is to prove the equivalence of (\ref{main_thm_bis_2}) and (\ref{main_thm_bis_3}) in Theorem \ref{main_thm_bis}. 

Notation in this section:  

We let $G_{\Q} = \Gal(\overline{\Q}/\Q)$ and $G_K = \Gal(\overline{\Q}/K)$. If $v$ is a place of $\Q$, we fix a decomposition group $G_{\Q_v} \subset G_{\Q}$ at $v$. Similarly, if $w$ is a place of $K$ above a place $v$ of $\Q$, we fix a decomposition group $G_{K_w} \subset G_K$ at $w$. Note that we do not necessarily have $G_{K_w} \subset G_{\Q_v}$, but there exists $g \in G_{\Q}$ such that $G_{K_w} \subset gG_{\Q_v}g^{-1}$. Actually, one can choose either $g=1$ or $g=g_0$, where $g_0 \in G_{\Q}$ is fixed so that the restriction of $g_0$ to $K$ generates $\Gal(K/\Q)$. We abuse notation and consider $G_{K_w}$ as a subgroup of $G_{\Q_v}$ via the isomorphism $gG_{\Q_v}g^{-1} \xrightarrow{\sim} G_{\Q_v}$.

We denote by $\chi_p : G_{\Q} \rightarrow \Z_p^{\times}$ the $p$-adic cyclotomic character and we let $\overline{\chi}_p : G_{\Q} \rightarrow (\Z/p\Z)^{\times}$ the reduction of $\chi_p$ modulo $p$. We denote by $(\Z/p\Z)(1)$ the $G_{\Q}$-module consisting of $\Z/p\Z$ with the action of $\overline{\chi}_p$. We denote by $\mu_p$ the $G_{\Q}$-module consisting of $p$th roots of unity in $\overline{\Q}$. Note that we have a non-canonical isomorphism of $G_{\Q}$-modules $(\Z/p\Z)(1) \simeq \mu_p$.

We denote by $g_p\geq 1$ the $\Z_p$-rank of $\mathbb{T}_{\mathfrak{P}}^0$ (the completion of the cuspidal Hecke algebra at the $p$-Eisenstein prime $\mathfrak{P}$). Recall that $\mathbb{T}_{\mathfrak{P}}^0$ is a DVR since we have assumed $p \mid \mid N-1$. We fix a local generator $\eta$ of $I$, \ie $\eta \in I$ is such that $I \cdot  \mathbb{T}_{\mathfrak{P}}^0 = \eta \cdot  \mathbb{T}_{\mathfrak{P}}^0 $.  

We assume as before that $N$ splits in $K$, and we write $N\mathcal{O}_K = \mathfrak{N}_1 \mathfrak{N}_2$ for prime ideals $\mathfrak{N}_1$, $\mathfrak{N}_2$ of $\mathcal{O}_K$. Recall that $u(K)$ is a fundamental unit of $\mathcal{O}_K$ and $h(K)$ is its class number.

\subsection{Background on the Galois structure of $\tilde{J}^{(p)}[I^2]$}\label{subsec_galois}
In this paragraph, we state some facts about the $G_{\Q}$-module $\tilde{J}^{(p)}[I^2] \otimes \Z_p$. These are well-known to experts on Mazur's Eisenstein ideal (\cf \eg \cite{Calegari_Emerton}, \cite{WWE}). Mazur proved \cite[Corollary II.16.4]{Mazur_Eisenstein} that 
\begin{equation}\label{eq_Mazur_Shimura_cuspidal}
\tilde{J}^{(p)}[I] \otimes \Z_p = \Sigma_p \oplus C_p\text{ ,}
\end{equation}
 where $C_p \simeq \Z/p\Z$ is the $p$-part of the cuspidal subgroup and $\Sigma_p$ is the $p$-part of the Shimura subgroup. 

 Let us recall that $\Sigma_p$ corresponds to $p$-part of the image in $\tilde{J}^{(p)}$ of the kernel of the natural map $J_0(N) \rightarrow J_1(N)$. By \cite[Proposition II.11.6]{Mazur_Eisenstein}, we have a canonical isomorphism of $G_{\Q}$-modules
 \begin{equation}\label{eq_Shimura_subgp}
 \Sigma_p \simeq \Hom((\Z/N\Z)^{\times}, \mu_p) \text{ .}
 \end{equation}

 Choose a $\mathbb{T}^0_{\mathfrak{P}}/I^2$-basis $(e_1, e_2)$ of $\tilde{J}^{(p)}[I^2] \otimes \Z_p$ such that $I\cdot e_1 = \Sigma_p$ and $I\cdot e_2 = C_p$. We choose $e_2$ so that $\eta\cdot e_2$ is the canonical generator of $C_p$ (corresponding to $1 \in \Z/p\Z$).
 This choice of a basis yields a Galois representation
$$\rho : G_{\Q} \rightarrow \GL_2(\mathbb{T}^0_{\mathfrak{P}}/I^2)$$
given by $\rho = \begin{pmatrix} a & \eta b \\ \eta c & d \end{pmatrix}$. Note that $\det(\rho) = \chi_p$ and $\rho$ is unramified outisde $N$ and $p$.

We see that $a$ and $d$ are characters $G_{\Q} \rightarrow (\mathbb{T}^0_{\mathfrak{P}}/I^2)^{\times}$ and $a\cdot d = \det(\rho) = \chi_p$. Write $a = \chi_p\cdot (1+\eta \cdot \varphi)$ and $d = 1-\eta \cdot \varphi$ where $\varphi : G_{\Q} \rightarrow \mathbb{T}^0_{\mathfrak{P}}/I = \Z/p\Z$ is a group homomorphism.

Mazur proved \cite[Proposition II.18.9]{Mazur_Eisenstein} that there is a canonical group isomorphism
$$
I/I^2 \otimes \Z_p \xrightarrow{\sim} (\Z/N\Z)^{\times} \otimes \Z_p
$$
given by
$$T_{\ell}-\ell-1 \mapsto \ell^{\ell-1} \otimes 1$$
for all primes $\ell \neq N$ (we are using the fact that $p>2$; otherwise one needs to use $\ell^{\frac{\ell-1}{2}}$ if $\ell \neq 2$). 

The choice of our local generator $\eta \in I$ determines a generator of $(\Z/N\Z)^{\times} \otimes \Z_p$, \ie a surjective group homomorphism $\log : (\Z/N\Z)^{\times} \rightarrow \Z/p\Z$. By (\ref{eq_Shimura_subgp}), the choice of $\eta$ induces a \emph{canonical} isomorphism
\begin{equation}\label{eq_Shimura_subgp_2}
\Sigma_p \xrightarrow{\sim} \mu_p \text{ .}
\end{equation}

Let us note that the choice of $e_1$ determines a choice of a generator of $\mu_p$ via (\ref{eq_Shimura_subgp_2}), namely $\eta\cdot e_1$. We denote this generator of $\mu_p$ by 
\begin{equation}\label{eq_def_zeta_p}
\zeta_p \in \overline{\Q} \text{ .}
\end{equation}

Mazur's isomorphism may be rewritten as 
$$I/I^2 \otimes \Z_p \xrightarrow{\sim} \Z/p\Z$$
$$T_{\ell}-\ell-1 \mapsto (\ell-1)\log(\ell) $$
$$\eta \mapsto 1 \text{ .}$$

On may view $\log$ as a group homomorphism $G_{\Q} \rightarrow \Z/p\Z$ via the identification $\Gal(\Q(\zeta_N)/\Q) \simeq (\Z/N\Z)^{\times}$.  If $\ell \neq N$ is a prime and $\Frob_{\ell} \in G_{\Q}$ is a Frobenius at $\ell$, we have (by the Eichler--Shimura relation)

$$T_{\ell} = \Tr(\rho(\Frob_{\ell})) = \ell+1 + \eta (\ell-1)\varphi(\Frob_{\ell})  \text{ .}$$
Therefore, we have $$\varphi(\Frob_{\ell}) =\log(\ell) \text{ .}$$

We shall need the following consequence of the description of $\rho$ given above. 
\begin{prop}\label{proof_torsion_I^2}
Let $F$ be a number field such that $F \cap \Q(\zeta_{Np}) = \Q$. Then $\tilde{J}^{(p)}(F)[I^2] = C_p$.
\end{prop}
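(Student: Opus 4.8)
The plan is to use the explicit matrix description of $\rho : G_{\Q} \to \GL_2(\mathbb{T}^0_{\mathfrak{P}}/I^2)$ and analyze which elements of $\tilde{J}^{(p)}[I^2]\otimes\Z_p$ can be fixed by $G_F$. Write a general element as $v = xe_1 + ye_2$ with $x,y \in \mathbb{T}^0_{\mathfrak{P}}/I^2$. First I would note that $C_p = I\cdot e_2$ is always contained in $\tilde{J}^{(p)}(F)[I^2]$: it is generated by $\eta e_2$, which is the canonical generator of the cuspidal subgroup, hence rational (Mazur), so a fortiori defined over $F$. So the content is the reverse inclusion: I must show no element outside $C_p$ is $G_F$-fixed.

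The key computation is to impose $\rho(g)v = v$ for all $g\in G_F$. Using $\rho = \begin{pmatrix} a & \eta b \\ \eta c & d\end{pmatrix}$ with $a = \chi_p(1+\eta\varphi)$, $d = 1-\eta\varphi$, this gives the two conditions $(a(g)-1)x + \eta b(g) y = 0$ and $\eta c(g) x + (d(g)-1)y = 0$ for all $g \in G_F$. Since $F\cap\Q(\zeta_{Np}) = \Q$, the restriction map $G_F \to \Gal(\Q(\zeta_{Np})/\Q)$ is surjective, so $\chi_p|_{G_F}$ still surjects onto $(\Z/p\Z)^\times$ and $\varphi|_{G_F}$ (which factors through $\Gal(\Q(\zeta_N)/\Q)$ via $\log$) is still surjective onto $\Z/p\Z$; moreover these two are "independent" in the sense that the joint map $G_F \to (\Z/p\Z)^\times \times \Z/p\Z$ via $(\overline\chi_p,\varphi)$ is surjective. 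From the first condition: pick $g\in G_F$ with $\overline\chi_p(g)\neq 1$; then $a(g)-1$ is a unit times... — more precisely, reducing mod $I$, $a(g)\equiv\overline\chi_p(g)$, so $a(g)-1$ is a unit in $\mathbb{T}^0_{\mathfrak{P}}/I^2$ when $\overline\chi_p(g)\neq 1$, forcing $x \in I\cdot(\mathbb{T}^0_{\mathfrak{P}}/I^2)$, i.e. $x = \eta x_0$. Then $\eta^2 = 0$ kills the cross terms, and the first condition becomes $\eta(\overline\chi_p(g)-1)x_0 = 0$ for all $g$, i.e. $x_0 \in I$, so actually $x = 0$. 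Plugging back, the second condition reduces to $(d(g)-1)y = 0$, i.e. $-\eta\varphi(g) y = 0$ for all $g\in G_F$; choosing $g$ with $\varphi(g)\neq 0$ forces $y \in I$, i.e. $v = \eta y_0 e_2 \in C_p$. That finishes it.

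The main obstacle — really the only thing requiring care — is the independence/surjectivity input: I need that for $g$ ranging over $G_F$, I can \emph{simultaneously} control $\overline\chi_p(g)$ and $\varphi(g)$, or at least that I can first find $g$ with $\overline\chi_p(g)\neq 1$ (to kill $x$) and then, \emph{separately}, find $g$ with $\varphi(g)\neq 0$ (to kill $y$). Both follow from $F\cap\Q(\zeta_{Np})=\Q$: this guarantees $[F(\zeta_{Np}):F] = [\Q(\zeta_{Np}):\Q] = (p-1)\cdot(N-1)$, so $G_F$ surjects onto $\Gal(\Q(\zeta_{Np})/\Q) = \Gal(\Q(\zeta_p)/\Q)\times\Gal(\Q(\zeta_N)/\Q)$, and hence onto each factor; since $\overline\chi_p$ cuts out $\Q(\zeta_p)$ and $\varphi$ (via $\log$) factors through $\Gal(\Q(\zeta_N)/\Q)$ and is surjective there, we get the needed elements. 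One should double-check the edge case where the two steps interact — but since we kill $x$ first using only $\overline\chi_p$ and then kill $y$ using only $\varphi$, the argument is genuinely sequential and no joint surjectivity beyond surjectivity onto each factor is needed. A final remark: one should make sure the argument is run over $\Z_p$-coefficients consistently, i.e. replace $\tilde{J}^{(p)}[I^2]$ by $\tilde{J}^{(p)}[I^2]\otimes\Z_p$ throughout, which is harmless since $C_p$ and $\Sigma_p$ are $p$-groups and $I$ acts invertibly on the prime-to-$p$ part.
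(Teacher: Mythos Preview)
Your approach is essentially the same as the paper's --- both exploit the explicit shape of $\rho|_{G_F}$ together with the surjectivity of $\overline{\chi}_p$ and $\varphi$ on $G_F$ coming from $F\cap\Q(\zeta_{Np})=\Q$ --- but you carry out the full linear-algebra computation whereas the paper reduces more quickly to checking that $e_2$ itself is not $G_F$-fixed.

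There is, however, a small but genuine slip in your sequencing. After obtaining $x=\eta x_0$, you assert that ``$\eta^2=0$ kills the cross terms, and the first condition becomes $\eta(\overline\chi_p(g)-1)x_0=0$''. But the first equation reads $(a(g)-1)\eta x_0 + \eta b(g)y = 0$; while $\eta^2=0$ does kill the $\eta\varphi$ contribution inside $a(g)-1$, it does \emph{not} kill the term $\eta b(g)y$, which equals $b(g)\bar y\cdot\eta$ with $\bar y := y \bmod I$ not yet known to vanish. So at this stage the first condition only yields $(\overline\chi_p(g)-1)x_0 + b(g)\bar y = 0$ in $\mathbf{F}_p$, from which one cannot conclude $x_0=0$.

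The fix is simply to swap the order of your last two steps. Once $x=\eta x_0$, the second equation $\eta c(g)x + (d(g)-1)y = 0$ already simplifies (here $\eta^2=0$ is used legitimately, on $\eta c(g)\cdot\eta x_0$) to $-\eta\varphi(g)y = 0$, \ie $\varphi(g)\bar y = 0$; choosing $g\in G_F$ with $\varphi(g)\neq 0$ gives $\bar y=0$. With $\bar y=0$ in hand, the first condition now honestly reduces to $(\overline\chi_p(g)-1)x_0=0$, and choosing $g$ with $\overline\chi_p(g)\neq 1$ gives $x_0=0$. With this reordering your argument is complete and agrees with the paper's.
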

\begin{proof}
Let $Q \in \tilde{J}^{(p)}(F)[I^2]$. Then $\eta \cdot Q \in \tilde{J}^{(p)}(F)[I]$ so $\eta\cdot Q$ belongs to $\tilde{J}^{(p)}(F) \cap (C_p \oplus \Sigma_p)$. Since $\Sigma_p \simeq \mu_p$ and $F \cap \Q(\zeta_p) = \Q$, we conclude that $\eta \cdot Q \in C_p$. One may assume that $Q = e_2$ (our basis element of $\tilde{J}^{(p)}[I^2]$). But for $g \in G_{\Q}$, we have $g(e_2)-e_2 = \eta\cdot b(g)\cdot e_1 - \eta\cdot \log(g)\cdot e_2$. Since $F \cap \Q(\zeta_N) = \Q$, there exists $g \in G_F$ such that $\log(g) \neq 0$. Therefore, $g(e_2) \neq e_2$ and $e_2$ cannot be defined over $F$.
\end{proof}

\subsection{Reduction to $\Sel_{I}(\tilde{J}^{(p)}/K)$}
In this paragraph, we make some reductions to replace $\Sel_{p}(\tilde{J}_D^{(p)}/\Q)_{\mathfrak{P}}$ with the ``simpler'' Selmer group $\Sel_{I}(\tilde{J}^{(p)}/K)$ (defined below). We first relate $\Sel_{p}(\tilde{J}_D^{(p)}/\Q)$ and $\Sel_{p}(\tilde{J}^{(p)}/K)$. 

\begin{lem}\label{lemma_K_Q_Selmer}
We have a natural group isomorphism
$$\Sel_{p}(\tilde{J}^{(p)}/K) \simeq \Sel_{p}(\tilde{J}^{(p)}/\Q) \oplus \Sel_{p}(\tilde{J}_D^{(p)}/\Q) \text{ .}$$
\end{lem}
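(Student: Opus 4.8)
The plan is to use the standard inflation–restriction argument comparing the $p$-Selmer group over $K$ with the ones over $\Q$ for $\tilde{J}^{(p)}$ and its twist $\tilde{J}^{(p)}_D$. First I would recall that $\Gal(K/\Q) = \{1,\tau\}$ has order $2$, which is prime to $p$, so the Hochschild–Serre spectral sequence degenerates and restriction gives an isomorphism
$$H^1(G_{\Q}, \tilde{J}^{(p)}[p]) \xrightarrow{\sim} H^1(G_K, \tilde{J}^{(p)}[p])^{\Gal(K/\Q)},$$
and more generally, decomposing under the $\pm 1$-eigenspaces of the $\Gal(K/\Q)$-action, one gets
$$H^1(G_K, \tilde{J}^{(p)}[p]) = H^1(G_K, \tilde{J}^{(p)}[p])^{+} \oplus H^1(G_K, \tilde{J}^{(p)}[p])^{-}.$$
The key observation is that $\tilde{J}^{(p)}[p]$ and $\tilde{J}^{(p)}_D[p]$ become isomorphic over $K$ via an isomorphism on which $\tau$ acts as $-1$ relative to the natural $\Gal(K/\Q)$-structure; concretely, twisting by the quadratic character $\chi_D$ exchanges the $+$ and $-$ eigenspaces. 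Hence $H^1(G_K, \tilde{J}^{(p)}[p])^{+} \simeq H^1(G_{\Q}, \tilde{J}^{(p)}[p])$ and $H^1(G_K, \tilde{J}^{(p)}[p])^{-} \simeq H^1(G_{\Q}, \tilde{J}^{(p)}_D[p])$, the latter again by inflation–restriction applied to the twist (whose base change to $K$ is $\tilde{J}^{(p)} \times_{\Q} K$).

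Next I would check that this decomposition is compatible with the local Selmer conditions. For each place $v$ of $\Q$, one has to compare the local condition at $v$ (the image of the local Kummer map for $\tilde{J}^{(p)}/\Q$, resp. $\tilde{J}^{(p)}_D/\Q$) with the local conditions at the places $w \mid v$ of $K$ for $\tilde{J}^{(p)}/K$. If $v$ splits in $K$, the two places $w_1, w_2$ above it give $K_{w_i} = \Q_v$, and $\Gal(K/\Q)$ permutes them; the $\Gal(K/\Q)$-invariants of $\prod_{w\mid v} H^1(G_{K_w}, \cdot)$ are then just the diagonal copy of $H^1(G_{\Q_v}, \cdot)$, and the induced decomposition into $\pm$ parts matches the local conditions for the curve and its twist. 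If $v$ is inert or ramified, $K_w/\Q_v$ is a quadratic extension of degree prime to $p$, so again restriction is injective on $H^1$ and respects the Kummer subgroups (the twist being unramified quadratic or tamely ramified at such $v$, with the local points of $\tilde{J}^{(p)}_D$ corresponding under the twisting isomorphism to the $\tau = -1$ part). Assembling these local comparisons, the global Selmer group $\Sel_p(\tilde{J}^{(p)}/K)$, cut out inside $H^1(G_K, \tilde{J}^{(p)}[p])$ by the local conditions, decomposes as the direct sum of the $+$ part — which is $\Sel_p(\tilde{J}^{(p)}/\Q)$ — and the $-$ part — which is $\Sel_p(\tilde{J}^{(p)}_D/\Q)$.

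The main obstacle I anticipate is the bookkeeping of the local conditions at the bad places, in particular at $N$ and at $p$: one must verify that the Kummer image for $\tilde{J}^{(p)}_D$ over $\Q_v$ really does correspond, under the $K$-isomorphism $\tilde{J}^{(p)}_D[p] \simeq \tilde{J}^{(p)}[p]$, to the $\tau$-anti-invariant part of the Kummer image for $\tilde{J}^{(p)}$ over $K_w$. Since $p \nmid D$, the twist is unramified at $p$, so there the comparison is the clean unramified-extension case; at $N$ (which splits in $K$ by hypothesis) the curve and its twist are isomorphic over $\Q_N = K_{\mathfrak{N}_i}$ and the argument reduces to the split-place case above. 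Once one is comfortable that everything in sight — cohomology groups and their Selmer subgroups — is, up to the prime-to-$p$ phenomenon, simply decomposing under $\Gal(K/\Q)$, the isomorphism follows formally; I would present it as a diagram chase rather than spelling out every local group.
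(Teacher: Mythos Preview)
Your approach is correct and is a standard variant of what the paper does, but the framing differs. The paper works on the ``induced'' side: it uses the isomorphism $\Ind_{G_K}^{G_{\Q}} \tilde{J}^{(p)}[p] \simeq \tilde{J}^{(p)}[p] \oplus \tilde{J}^{(p)}_D[p]$ (valid since $2$ is invertible), then applies Shapiro's lemma to identify $H^1(G_K,\tilde{J}^{(p)}[p])$ with $H^1(G_{\Q},\Ind\,\tilde{J}^{(p)}[p])$, and handles the local conditions uniformly via Mackey's formula for the restriction of an induced representation, packaged in a single commutative diagram. You instead work on the ``restricted'' side: decompose $H^1(G_K,\cdot)$ into $\pm$-eigenspaces for $\Gal(K/\Q)$ and identify each piece with $H^1(G_{\Q},\cdot)$ for the curve or its twist via inflation--restriction, then verify local compatibility by a split/inert/ramified case analysis. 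These are dual formulations of the same fact; the paper's induced-module packaging avoids the place-by-place bookkeeping you anticipate as the main obstacle, while your eigenspace argument is more elementary and makes the role of the hypothesis $p\nmid 2$ more visible.
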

\begin{proof}
We shall use the following general fact. Let $A$ be a commutative ring on which $2$ is invertible, $G$ be a group and $H$ be a normal subgroup of $G$ with $G/H \simeq \Z/2\Z$. If $M$ is a $A$-module with an action of $G$, then there is a natural $G$-equivariant isomorphism of $A$-modules
$$\Ind_H^G M \simeq M \oplus (M \otimes \chi)\text{ ,}$$
where $\chi : G/H \rightarrow \{1, -1\}$ is the character attached to $H$. This follows easily from the definition of the induced module.

Since $p$ is odd, we get an isomorphism of $G_{\Q}$-modules
\begin{equation}\label{eq_induced_1}
\Ind_{G_K}^{G_{\Q}} \tilde{J}^{(p)}[p] \simeq \tilde{J}^{(p)}[p] \oplus \tilde{J}_D^{(p)}[p] \text{ .}
\end{equation}
Similarly, for any place $v$ of $\Q$, we have an isomorphism of $G_{\Q_v}$-modules
\begin{equation}\label{eq_induced_2}
\bigoplus_{w \mid v \atop w \text{ place of } K} \Ind_{G_{K_w}}^{G_{\Q_v}} \tilde{J}^{(p)}(\overline{\Q}_w) \otimes \Z_p \simeq \tilde{J}^{(p)}(\overline{\Q}_v) \otimes \Z_p \bigoplus \tilde{J}_D^{(p)}(\overline{\Q}_v) \otimes \Z_p 
\end{equation}
(in the case where $v$ is inert of ramified, we use the general fact stated above.)
Combining (\ref{eq_induced_1}) and (\ref{eq_induced_2}), we get an isomorphism of $G_{\Q_v}$-modules
\begin{equation}\label{eq_induced_3}
\left(\Ind_{G_K}^{G_{\Q}} \tilde{J}^{(p)}[p]\right) \mid_{G_{\Q_v}}\simeq\bigoplus_{w \mid v \atop w \text{ place of } K} \Ind_{G_{K_w}}^{G_{\Q_v}} \tilde{J}^{(p)}[p] \text{ .}
\end{equation}
This is a special case of Mackey's formula for the restriction of an induced.
The following diagram is commutative:
\[
\begin{tikzcd}[trim left,trim right=0cm]
	{H^1(G_K, \tilde{J}^{(p)}[p])} & {} & {\bigoplus_{w \mid v} H^1(G_{K_w}, \tilde{J}^{(p)}(\overline{\Q}_w) \otimes \Z_p)} \\
	{H^1(G_{\Q}, \Ind_{G_K}^{G_{\Q}}\tilde{J}^{(p)}[p])} && {\bigoplus_{w \mid v} H^1(G_{\Q_v}, \Ind_{G_{K_w}}^{G_{\Q_v} }\tilde{J}^{(p)}(\overline{K}_w) \otimes \Z_p)} \\
	{H^1(G_{\Q}, \tilde{J}^{(p)}[p]) \bigoplus H^1(G_{\Q}, \tilde{J}_D^{(p)}[p])} && {H^1(G_{\Q_v}, \tilde{J}^{(p)}(\overline{\Q}_v) \otimes \Z_p) \bigoplus H^1(G_{\Q_v}, \tilde{J}_D^{(p)}(\overline{\Q}_v) \otimes \Z_p).}
	\arrow["{\bigoplus_{w \mid v} \text{Res}_w}", from=1-1, to=1-3]
	\arrow["{\text{Shapiro}}"', from=1-1, to=2-1]
	\arrow["(\ref{eq_induced_1})"', from=2-1, to=3-1]
	\arrow["{\text{Res}_v}", from=3-1, to=3-3]
	\arrow["(\ref{eq_induced_2})", from=2-3, to=3-3]
	\arrow["{\text{Shapiro}}", from=1-3, to=2-3]
	\arrow["(\ref{eq_induced_3})", from=2-1, to=2-3]
\end{tikzcd}
\]
This proves that $$\Sel_{p}(\tilde{J}^{(p)}/K) \simeq \Sel_{p}(\tilde{J}^{(p)}/\Q) \oplus \Sel_{p}(\tilde{J}_D^{(p)}/\Q) \text{ .}$$
\end{proof}

Let us now define, for every integer $n\geq 1$, another Selmer group, denoted by $\Sel_{I^n}(\tilde{J}^{(p)}/K)$, which will be easier to understand (we shall mainly be interested in the case $n=1$, but the definition will be useful in the proof of Lemma \ref{lem_passage_I_p}). Recall that we have fixed $\eta \in I$ such that $\eta$ generates $I \cdot \mathbb{T}_{\mathfrak{P}}^0$. We define $$\Sel_{I^n}(\tilde{J}^{(p)}/K) \subset H^1(G_K, \tilde{J}^{(p)}[I^n] \otimes \Z_p)$$
by imposing that locally at every place $w$ of $K$, the restriction of $\Sel_{I^n}(\tilde{J}^{(p)}/K)$ at $w$ lies in the image of the Kummer map
$$\kappa_w : \left(\tilde{J}^{(p)}(K_w)/I^n\tilde{J}^{(p)}(K_w) \right) \otimes \Z_p \rightarrow H^1(G_{K_w}, \tilde{J}^{(p)}[I^n] \otimes \Z_p) $$
coming from the short exact sequence 
$$0 \rightarrow \tilde{J}^{(p)}[\eta^n] \rightarrow \tilde{J}^{(p)} \xrightarrow{\eta^n} \tilde{J}^{(p)} \rightarrow 0 \text{ .}$$
Here, we use the fact that $ \tilde{J}^{(p)}[I^n] \otimes \Z_p = ( \tilde{J}^{(p)}[\eta^n])_{\mathfrak{P}}$ and $$\left( \tilde{J}^{(p)}(K_w)/I^n\tilde{J}^{(p)}(K_w) \right) \otimes \Z_p = \left( \tilde{J}^{(p)}(K_w)/\eta^n\tilde{J}^{(p)}(K_w) \right)_{\mathfrak{P}} \text{ .}$$
Similarly, we have an embedding
\begin{equation}\label{eq_global_MW_I}
\left( \tilde{J}^{(p)}(K)/I^n \tilde{J}^{(p)}(K) \right) \otimes \Z_p = \left( \tilde{J}^{(p)}(K)/\eta^n \tilde{J}^{(p)}(K) \right)_{\mathfrak{P}} \hookrightarrow \Sel_{I^n}(\tilde{J}^{(p)}/K) \text{ .}
\end{equation}

The definition of $\Sel_{I^n}(\tilde{J}^{(p)}/K)$ does not depend on the choice of $\eta$. Note that if $\mathbb{T}_{\mathfrak{P}}^0 = \Z_p$ (\ie $g_p= \rk_{\Z_p} \mathbb{T}_{\mathfrak{P}}^0 = 1$) then one can choose $\eta = p$ and we get $\Sel_I(\tilde{J}^{(p)}/K) = \Sel_p(\tilde{J}^{(p)}/K)_{\mathfrak{P}}$. More generally, we have $\Sel_p(\tilde{J}^{(p)}/K)_{\mathfrak{P}} = \Sel_{I^{g_p}}(\tilde{J}^{(p)}/K)$.

\begin{lem}\label{lem_passage_I_p}
The cuspidal subgroup $C_p \subset \tilde{J}^{(p)}(\Q)[I]\subset \tilde{J}^{(p)}(K)[I] $ yields a non-zero subgroup of $\Sel_I(\tilde{J}^{(p)}/K)$. Furthermore, we have $\Sel_{p}(\tilde{J}_D^{(p)}/\Q)_{\mathfrak{P}} \neq 0$ if and only if $\rk_{\mathbf{F}_p} \Sel_I(\tilde{J}^{(p)}/K)>1$.
\end{lem}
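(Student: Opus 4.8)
The plan is to prove the two statements separately, using the description of $\tilde{J}^{(p)}[I^2]$ from Subsection \ref{subsec_galois} together with Lemma \ref{lemma_K_Q_Selmer}. For the first assertion, I would show that the image of the cuspidal class $\eta\cdot e_2 \in C_p$ under the global Kummer map $\tilde{J}^{(p)}(K)/\eta\,\tilde{J}^{(p)}(K) \to \Sel_I(\tilde{J}^{(p)}/K)$ of \eqref{eq_global_MW_I} is nonzero. Since $C_p \simeq \Z/p\Z$ is a subgroup of $\tilde{J}^{(p)}(\Q)[I]$, the point $\eta\cdot e_2$ is not divisible by $\eta$ in $\tilde{J}^{(p)}(K)$: indeed if $\eta\cdot e_2 = \eta\cdot Q$ for $Q \in \tilde{J}^{(p)}(K)$ then $Q - e_2 \in \tilde{J}^{(p)}[\eta] = \tilde{J}^{(p)}[I]\otimes\Z_p$ would have to be defined over $K$, and combined with $Q \in \tilde{J}^{(p)}(K)$ this forces $e_2$ to satisfy $e_2 \bmod (\Sigma_p \oplus C_p)$-type relations; more directly, since $K \cap \Q(\zeta_{Np}) = \Q$ (as $N$ splits and $p \nmid D$ so $K$ is unramified at $N$ and $p$), Proposition \ref{proof_torsion_I^2} gives $\tilde{J}^{(p)}(K)[I^2] = C_p$, so $Q \in \tilde{J}^{(p)}(K)[I^2]$ would mean $Q \in C_p$, whence $\eta\cdot Q = 0 \neq \eta\cdot e_2$, a contradiction. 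Thus $\eta\cdot e_2$ has nonzero image in $\tilde{J}^{(p)}(K)/\eta\,\tilde{J}^{(p)}(K)$, and by \eqref{eq_global_MW_I} this embeds into $\Sel_I(\tilde{J}^{(p)}/K)$, giving the nonzero subgroup.

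For the second assertion, I would combine Lemma \ref{lemma_K_Q_Selmer} with the observation that $\Sel_I(\tilde{J}^{(p)}/K)$ interpolates between $\Sel_p(\tilde{J}^{(p)}/K)_{\mathfrak{P}}$ and $\Sel_p(\tilde{J}^{(p)}_D/\Q)_{\mathfrak{P}}$. Concretely, one has a $\Gal(K/\Q)$-action on $\Sel_I(\tilde{J}^{(p)}/K)$ (induced by the action on $\tilde{J}^{(p)}[I]\otimes\Z_p$, which is through $\Gal(K/\Q)$ by functoriality since $\tilde{J}^{(p)}$ is defined over $\Q$) and, exactly as in the proof of Lemma \ref{lemma_K_Q_Selmer} with $\tilde{J}^{(p)}[p]$ replaced by $\tilde{J}^{(p)}[I]\otimes\Z_p$, this decomposes $\Sel_I(\tilde{J}^{(p)}/K)$ as a direct sum of a ``$+$'' part $\Sel_I(\tilde{J}^{(p)}/\Q)$ and a ``$-$'' part $\Sel_I(\tilde{J}^{(p)}_D/\Q)$. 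Since $\mathbb{T}^0_{\mathfrak{P}}/I = \mathbf{F}_p$, the group $\Sel_I(\tilde{J}^{(p)}_D/\Q)$ is an $\mathbf{F}_p$-vector space and, by the same completion argument as in Section \ref{section_L_modSymb} (or directly because $\tilde{J}^{(p)}_D[I]\otimes\Z_p = \tilde{J}^{(p)}_D[p]_{\mathfrak{P}}$, the $\mathfrak{P}$-component being all of $\tilde{J}^{(p)}_D[p]\otimes_{\mathbb{T}^0}\mathbb{T}^0_{\mathfrak{P}}$ since the Galois action on $\tilde{J}^{(p)}_D[p]$ factors appropriately), one identifies $\Sel_I(\tilde{J}^{(p)}_D/\Q)$ with $\Sel_p(\tilde{J}^{(p)}_D/\Q)_{\mathfrak{P}}$. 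Finally I would show $\rk_{\mathbf{F}_p}\Sel_I(\tilde{J}^{(p)}/\Q) = 1$: the cuspidal class $C_p$ gives a lower bound of $1$ by the first part (applied with $K$ replaced by $\Q$, or just noting $C_p$ lands in $\Sel_I(\tilde{J}^{(p)}/\Q)$ nontrivially), and the upper bound $\leq 1$ follows from a direct computation of the $p$-Selmer group of $\tilde{J}^{(p)}/\Q$ in the Eisenstein setting — this is essentially Mazur's computation (the analogue over $\Q$ of what Mazur does in \cite{Mazur_1979} for imaginary $K$), using that $\tilde{J}^{(p)}[I]\otimes\Z_p \simeq \mu_p \oplus \Z/p\Z$ and that $\Q^\times\otimes\Z/p\Z$ is generated by $N$ modulo the local conditions, so the only class that survives is the cuspidal one. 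Putting these together, $\Sel_I(\tilde{J}^{(p)}/K)$ has $\mathbf{F}_p$-rank $1 + \rk_{\mathbf{F}_p}\Sel_p(\tilde{J}^{(p)}_D/\Q)_{\mathfrak{P}}$, so $\rk_{\mathbf{F}_p}\Sel_I(\tilde{J}^{(p)}/K) > 1$ if and only if $\Sel_p(\tilde{J}^{(p)}_D/\Q)_{\mathfrak{P}} \neq 0$.

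The main obstacle I anticipate is the clean identification $\Sel_I(\tilde{J}^{(p)}_D/\Q) \cong \Sel_p(\tilde{J}^{(p)}_D/\Q)_{\mathfrak{P}}$ and, relatedly, the computation $\rk_{\mathbf{F}_p}\Sel_I(\tilde{J}^{(p)}/\Q) = 1$. The first requires checking that taking the $\mathfrak{P}$-completion commutes with forming the Selmer group — that is, that the $\mathfrak{P}$-primary part of each local Kummer image is cut out by the same local conditions after completion — which is where one uses that $\mathbb{T}^0_{\mathfrak{P}}$ is a DVR and that the Galois representation on the $I$-torsion already ``lives at $\mathfrak{P}$.'' The second is really Mazur's Eisenstein descent over $\Q$: one must analyze the local conditions at $N$ (using Proposition \ref{thm_uniformization}, which gives $(\tilde{J}^{(p)}(\Q_N)/I\tilde{J}^{(p)}(\Q_N))\otimes\Z_p \simeq (\Z/p\Z)^2$), at $p$, and at $\infty$, and verify that the global Selmer group over $\Q$ collapses to the line spanned by the cuspidal subgroup. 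I would isolate this $\Q$-computation as a separate lemma (or cite the relevant portion of \cite{Mazur_1979} adapted to the even setting), since the serious Selmer-group analysis over $K$ — with its extra unit contribution — is carried out in the subsequent paragraphs of this subsection and I would not want to duplicate it here.
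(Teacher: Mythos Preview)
Your treatment of the first assertion is correct and coincides with the paper's argument: both reduce to Proposition \ref{proof_torsion_I^2} via $K \cap \Q(\zeta_{Np}) = \Q$.

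For the second assertion your strategy diverges from the paper's, and there is a genuine gap. The identification $\Sel_I(\tilde{J}^{(p)}_D/\Q) \cong \Sel_p(\tilde{J}^{(p)}_D/\Q)_{\mathfrak{P}}$ is \emph{false} when $g_p>1$: since $\mathbb{T}^0_{\mathfrak{P}}$ is a DVR totally ramified of degree $g_p$ over $\Z_p$, one has $p = (\text{unit})\cdot \eta^{g_p}$, so $\tilde{J}^{(p)}_D[p]_{\mathfrak{P}} = \tilde{J}^{(p)}_D[I^{g_p}]\otimes\Z_p$ has order $p^{2g_p}$, not $p^2$. Your justification ``$\tilde{J}^{(p)}_D[I]\otimes\Z_p = \tilde{J}^{(p)}_D[p]_{\mathfrak{P}}$'' is therefore wrong, and similarly your step~3 does not follow from Mazur's results, which compute $\Sel_p(\tilde{J}^{(p)}/\Q)_{\mathfrak{P}} = \Sel_{I^{g_p}}(\tilde{J}^{(p)}/\Q)$, not $\Sel_I(\tilde{J}^{(p)}/\Q)$.

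What is actually needed is a filtration argument comparing $\Sel_{I^n}$ for varying $n$. The paper does this once, over $K$: it uses Lemma \ref{lemma_K_Q_Selmer} at the level of $\Sel_p$ (where Mazur's results for $\tilde{J}^{(p)}/\Q$ apply directly to give $\rk_{\mathbf{F}_p}\Sel_p(\tilde{J}^{(p)}/K)_{\mathfrak{P}}>1 \Leftrightarrow \Sel_p(\tilde{J}^{(p)}_D/\Q)_{\mathfrak{P}}\neq 0$), and then proves by induction on $n$ that $\rk_{\mathbf{F}_p}\Sel_I(\tilde{J}^{(p)}/K)>1 \Leftrightarrow \rk_{\mathbf{F}_p}\Sel_{I^n}(\tilde{J}^{(p)}/K)>1$, via the exact sequence $0 \to \overline{C}_p^{(1)} \to \Sel_I \to \Sel_{I^{n+1}} \to \Sel_{I^n}$ and the claim that a class in $\Sel_I$ outside $\overline{C}_p^{(1)}$ maps to a class in $\Sel_{I^{n+1}}$ outside $\overline{C}_p^{(n+1)}$ (again using Proposition \ref{proof_torsion_I^2}). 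Your route can be salvaged --- for $\tilde{J}^{(p)}_D/\Q$ the map $\Sel_I \hookrightarrow \Sel_{I^{g_p}}$ is injective since $\tilde{J}^{(p)}_D(\Q)$ has no $p$-torsion, and the converse direction uses that a nonzero $\mathbb{T}^0_{\mathfrak{P}}/I^{g_p}$-module has nonzero $I$-torsion --- but your step~3 still requires the paper's induction over $\Q$, so you end up doing essentially the same work in two places rather than once.
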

\begin{proof}
For all $n\geq 1$, the cuspidal subgroup $C_p$ yields a subgroup of $ \left( \tilde{J}^{(p)}(K)/I^n \tilde{J}^{(p)}(K) \right) \otimes \Z_p$, which we denote by $\overline{C}_p^{(n)}$. We claim that $\overline{C}_p^{(n)}$ is non-zero, and thus yields a non-zero element of $\Sel_{I^n}(\tilde{J}^{(p)}/K)$ by (\ref{eq_global_MW_I}). Indeed, if $\overline{C}_p^{(n)} = 0$ then $C_p \subset I^n \cdot \tilde{J}^{(p)}(K)$. This implies that $ \tilde{J}^{(p)}(K)[I^2]$ contains strictly $C_p$, which contradicts Proposition \ref{proof_torsion_I^2} applied to $F=K$ (we use the fact that $p$ and $N$ are unramified in $K$). We have thus proved that $\overline{C}_p^{(n)}$ is a non-zero subgroup of $\Sel_{I^n}(\tilde{J}^{(p)}/K)$. 

Let us prove the second part of the lemma. 
By the work of Mazur \cite{Mazur_1979,Mazur_Eisenstein}, we know that 
\begin{itemize}
\item The Mordell--Weil rank of $\tilde{J}^{(p)}(\Q)$ is zero.
\item The completion at $\mathfrak{P}$ of the Tate--Shafarevich group of $\tilde{J}^{(p)}$ over $\Q$ is trivial.
\item The $p$-part of the torsion in $\tilde{J}^{(p)}(\Q)$ is equal to the cuspidal subgroup $C_p$ (which is cyclic of order $p$ since we assume $p \mid \mid N-1$).
\end{itemize}
Therefore, we conclude that $\Sel_{p}(\tilde{J}^{(p)}/\Q)_{\mathfrak{P}} \simeq \Z/p\Z$. By Lemma \ref{lemma_K_Q_Selmer}, we get that $\Sel_{p}(\tilde{J}_D^{(p)}/\Q)_{\mathfrak{P}} \neq 0$ if and only if $\rk_{\mathbf{F}_p} \Sel_p(\tilde{J}^{(p)}/K)_{\mathfrak{P}}>1$. If $g_p=1$, we have $\Sel_p(\tilde{J}^{(p)}/K)_{\mathfrak{P}} =\Sel_I(\tilde{J}^{(p)}/K)$ so we are done. Let us therefore assume that $g_p>1$ in what follows.

Consider the short exact sequence
$$ 0 \rightarrow \tilde{J}^{(p)}[\eta] \rightarrow \tilde{J}^{(p)}[\eta^{n+1}] \xrightarrow{\eta}  \tilde{J}^{(p)}[\eta^n] \rightarrow 0 \text{ .}$$
One easily checks that completing at $\mathfrak{P}$ and taking Galois cohomology, we get an exact sequence
$$ \Sel_I(\tilde{J}^{(p)}/K) \rightarrow \Sel_{I^{n+1}}(\tilde{J}^{(p)}/K) \rightarrow \Sel_{I^n}(\tilde{J}^{(p)}/K) \text{ .}$$

We claim that the kernel of $ \Sel_I(\tilde{J}^{(p)}/K) \rightarrow \Sel_{I^{n+1}}(\tilde{J}^{(p)}/K) $ is generated by $\overline{C}_p^{(1)}$. Indeed, let $\alpha : G_K \rightarrow \tilde{J}^{(p)}[I] \otimes \Z_p$ be a cocycle whose class lies in the kernel. This means that, up to scaling $\alpha$, there exists $P \in \tilde{J}^{(p)}[I^{n+1}]$ such that for all $g \in G_K$, we have $\alpha(g) = gP-P$. Since $\eta \cdot \alpha(g)=0$, we get $Q := \eta \cdot P \in  \tilde{J}^{(p)}(K)$, and therefore $Q \in  \tilde{J}^{(p)}(K)[I^{n+1}]$. By Proposition \ref{proof_torsion_I^2}, we have $ \tilde{J}^{(p)}(K)[I^{n+1}] = C_p$. So $\alpha$ belongs to $\overline{C}_p^{(1)}$.

For all $n\geq 1$, we thus have an exact sequence 
$$0 \rightarrow \overline{C}_p^{(1)} \rightarrow \Sel_I(\tilde{J}^{(p)}/K) \rightarrow \Sel_{I^{n+1}}(\tilde{J}^{(p)}/K) \rightarrow \Sel_{I^n}(\tilde{J}^{(p)}/K) \text{ .}$$
Recall that our goal is to prove that $\rk_{\mathbf{F}_p}\Sel_I(\tilde{J}^{(p)}/K)  >1$ if and only if $\rk_{\mathbf{F}_p}\Sel_p(\tilde{J}^{(p)}/K)_{\mathfrak{P}}>1$. More generally, let us prove that for all $n\geq 1$,  $\rk_{\mathbf{F}_p}\Sel_I(\tilde{J}^{(p)}/K)  >1$ if and only if $\rk_{\mathbf{F}_p}\Sel_{I^n}(\tilde{J}^{(p)}/K) >1$. We prove this by induction on $n$. In view of the above exact sequence, to prove the induction step it suffices to show that an element of $ \Sel_I(\tilde{J}^{(p)}/K)$ not in $\overline{C}_p^{(1)}$ is mapped to an element of $\Sel_{I^{n+1}}(\tilde{J}^{(p)}/K)$ not in $\overline{C}_p^{(n+1)}$.

Let $P \in \tilde{J}^{(p)}(\overline{K})[I^{n+1}]$ such that $\eta^{n+1} \cdot P$ is a generator of $C_p$. A generator of $\overline{C}_p^{(n+1)}$ is given by the cocycle $\beta : G_K \rightarrow \tilde{J}^{(p)}[I^{n+1}] \otimes \Z_p$ defined by $g \mapsto g(P)-P$. Let $\alpha : G_K \rightarrow \tilde{J}^{(p)}[I] \otimes \Z_p$ be a cocycle whose class in $\Sel_I(\tilde{J}^{(p)}/K)$ is mapped to the class of $\beta$ in $\Sel_{I^{n+1}}(\tilde{J}^{(p)}/K)$. This implies that there exists $Q \in  \tilde{J}^{(p)}[I^{n+1}] \otimes \Z_p$ such that for all $g \in G_K$, we have $$\alpha(g) = g(P)-P - (g(Q)-Q) \in  \tilde{J}^{(p)}[I] \otimes \Z_p \text{ .}$$
Up to scaling $Q$ by an integer prime to $p$, one can assume that $Q \in  \tilde{J}^{(p)}[I^{n+1}]$. Furthermore, we may assume (up to replacing $P$ with $P-Q$) that for all $g \in G_K$, we have $g(\eta\cdot P) = \eta\cdot P$. This means $\eta \cdot P \in \tilde{J}^{(p)}(K)$. Since $\eta^{n+1} \cdot P$ is a generator of $C_p$, we get that $C_p \subset \eta^{n}\cdot \tilde{J}^{(p)}(K)$ and thus $C_p \subset I\cdot \tilde{J}^{(p)}(K)$. Consequently, we have $\tilde{J}^{(p)}(K)[I^2] \neq C_p$. This contradicts Proposition \ref{proof_torsion_I^2}.

\end{proof}

\subsection{Computation of  $\Sel_{I}(\tilde{J}^{(p)}/K)$}
Our goal in this final section is to prove the following:
\begin{equation}\label{eq_main_thm_selmer}
\begin{aligned}
& \rk_{\mathbf{F}_p} \Sel_I(\tilde{J}^{(p)}/K)>1 \\&  \Longleftrightarrow \\& u(K)^{h(K)} \text{ is a } p\text{th power modulo } \mathfrak{N}_i \text{ (for any, equivalently all, } i \in \{1,2\}\text{).} 
\end{aligned}
\end{equation}
By Lemma \ref{lem_passage_I_p}, this will prove the equivalence of (\ref{main_thm_bis_2}) and (\ref{main_thm_bis_3}) in Theorem \ref{main_thm_bis}. 

Recall that $\Sel_I(\tilde{J}^{(p)}/K)$ is a subgroup of $H^1(G_{K, Np}, \tilde{J}^{(p)}[I] \otimes \Z_p)$ defined by some local conditions at $\mathfrak{N}_1$, $\mathfrak{N}_2$ and the primes above $p$ in $\mathcal{O}_K$ (at other places the condition is unramified, whence the use of the Galois group $G_{K, Np}$ of the maximal unramified outside $Np$ extension of $K$). By our choice of $\eta \in I$, we have by  (\ref{eq_Mazur_Shimura_cuspidal}) and (\ref{eq_Shimura_subgp_2})
$$\tilde{J}^{(p)}[I] \otimes \Z_p  = \Sigma_p \oplus C_p \simeq \mu_p \oplus \Z/p\Z \text{ .}$$

One thus gets a natural inclusion $$\Sel_I(\tilde{J}^{(p)}/K) \subset H^1(G_{K,Np}, \mu_p) \bigoplus H^1(G_{K,Np}, \Z/p\Z) \text{ .}$$

The local condition at $p$ has been determined by \v{C}esnavi\v{c}ius in \cite{Ke}. 

\begin{lem}
The Selmer condition for $\Sel_I(\tilde{J}^{(p)}/K)$ at a place $w$ of $K$ dividing $p$ is the fppf condition, \ie the subgroup 
$$ H^1_{\fppf}(\mathcal{O}_w, \mu_p) \bigoplus H^1_{\etale}(\mathcal{O}_w, \Z/p\Z) \subset  H^1(G_{K_w}, \mu_p) \bigoplus H^1(G_{K_w}, \Z/p\Z) \text{ .}$$
\end{lem}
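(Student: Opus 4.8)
The plan is to deduce this from the general identification of local Selmer conditions with flat cohomology groups due to \v{C}esnavi\v{c}ius \cite{Ke}, combined with Mazur's structural results recalled in \S\ref{subsec_galois}.

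The relevant geometric input is that $\tilde{J}^{(p)}$ has good reduction at every place $w$ of $K$ above $p$: indeed $p\neq N$ and $\tilde{J}^{(p)}$ has good reduction away from $N$, and $p$ is unramified in $K$ since $\gcd(D,Np)=1$. Hence the N\'eron model $\mathcal{J}$ of $\tilde{J}^{(p)}$ over $\mathcal{O}_w$ is an abelian scheme, and $\mathcal{G}:=\mathcal{J}[\eta]_{\mathfrak{P}}$ is a finite flat $\mathcal{O}_w$-group scheme of order $p^2$ with generic fibre $\tilde{J}^{(p)}[I]\otimes\Z_p=\Sigma_p\oplus C_p$ (see (\ref{eq_Mazur_Shimura_cuspidal})). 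First I would check that this decomposition extends over $\mathcal{O}_w$: the Shimura subgroup is already a subgroup scheme of multiplicative type isomorphic to $\mu_p$ by (\ref{eq_Shimura_subgp}), and the cuspidal subgroup is a constant subgroup scheme isomorphic to $\Z/p\Z$, both over $\Z[1/N]$; the resulting addition morphism $\mu_p\times\Z/p\Z\to\mathcal{G}$ is an isomorphism on generic fibres between finite flat $\mathcal{O}_w$-group schemes of the same order, hence an isomorphism.

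The main step is the identification of the image of the Kummer map $\kappa_w$ for $w\mid p$. Since $\mathcal{J}$ is an abelian scheme over $\mathcal{O}_w$, the results of \cite{Ke} identify this image with the image of $H^1_{\fppf}(\mathcal{O}_w,\mathcal{G})$ inside $H^1_{\fppf}(K_w,\tilde{J}^{(p)}[I]\otimes\Z_p)=H^1(G_{K_w},\tilde{J}^{(p)}[I]\otimes\Z_p)$, \ie with the flat local condition cut out by the N\'eron model. At this point one should simply check that the hypotheses under which \cite{Ke} produces this description are satisfied here (good reduction at $w$, $p$ odd, $\eta$ an isogeny of $p$-power degree), so that no contribution from component groups or from a non-maximal finite flat model intervenes.

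It then remains to transport the isomorphism $\mathcal{G}\simeq\mu_p\times\Z/p\Z$ of the second paragraph through flat cohomology: it yields $H^1_{\fppf}(\mathcal{O}_w,\mathcal{G})\simeq H^1_{\fppf}(\mathcal{O}_w,\mu_p)\oplus H^1_{\fppf}(\mathcal{O}_w,\Z/p\Z)$, compatibly with the decomposition of $H^1(G_{K_w},-)$ coming from (\ref{eq_Mazur_Shimura_cuspidal}) and (\ref{eq_Shimura_subgp_2}); and since $\Z/p\Z$ is \'etale over $\mathcal{O}_w$ one has $H^1_{\fppf}(\mathcal{O}_w,\Z/p\Z)=H^1_{\etale}(\mathcal{O}_w,\Z/p\Z)$, giving the claimed subgroup. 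The only genuinely delicate point is the one in the previous paragraph, matching our Kummer-image local condition with the flat cohomology condition of \cite{Ke}; everything else is formal once good reduction at $p$ has been invoked.
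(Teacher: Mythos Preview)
Your approach is essentially the same as the paper's: invoke good reduction at $w\mid p$ so that the N\'eron model is an abelian scheme, apply \cite{Ke} to identify the Kummer local condition with the flat condition, and use that the $\mathfrak{P}$-part of $\mathcal{J}[\eta]$ is $\mu_p\times\Z/p\Z$ over $\mathcal{O}_w$.

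There is one small gap. Your sentence ``the resulting addition morphism $\mu_p\times\Z/p\Z\to\mathcal{G}$ is an isomorphism on generic fibres between finite flat $\mathcal{O}_w$-group schemes of the same order, hence an isomorphism'' is not a valid general principle: over $\Z_p[\zeta_p]$, the map $\Z/p\Z\to\mu_p$ sending $1\mapsto\zeta_p$ is a homomorphism of finite flat group schemes of the same order which is an isomorphism on generic fibres but not on special fibres. What saves you here is that $p\nmid D$, so $\mathcal{O}_w$ is unramified over $\Z_p$ and $e=1<p-1$; then Raynaud's uniqueness of finite flat models applies and your conclusion holds. Alternatively, you can check the special fibre directly: $\Sigma_p$ reduces to the connected $\mu_p$ and $C_p$ to the \'etale $\Z/p\Z$, so their scheme-theoretic intersection is trivial on both fibres, whence the addition map is a closed immersion and therefore an isomorphism by comparing orders. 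Either fix is short; just make the justification explicit.
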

\begin{proof}
We apply \cite[Proposition 2.5 (d)]{Ke} to $A=B=\tilde{J}^{(p)}$ over $k=K_w$, with the isogeny $\phi = \eta$ (a local generator of the Eisenstein ideal $I$ as before). Since $\tilde{J}^{(p)}$ has good reduction at $w$, its N\'eron model $\mathcal{J}$ over $\mathcal{O}_w$ (the valuation ring of $K_w$) is a connected proper abelian scheme over $\mathcal{O}_w$. Therefore, the Tamagawa numbers $c_A$ and $c_B$ are equal to one. The abelian scheme $\mathcal{J}[\eta]$ is a direct sum of its prime to $p$-part and its $p$-part. The $p$-part of $\mathcal{J}[\eta]$ decomposes according to the maximal ideals of residual characteristic $p$ of $\mathbb{T}^0$ (indeed, there is an idempotent of $\mathbb{T}^0 \otimes_{\Z} \Z_p$ associated to each such maximal ideal). The $\mathfrak{P}$-part of that idempotent decomposition is $\Z/p\Z \oplus \mu_p$. The lemma then follows directly from \cite[Proposition 2.5 (d)]{Ke} (after taking the $p$-part and taking the decomposition according to the idempotents just mentioned).
\end{proof}

Therefore, $\Sel_I(\tilde{J}^{(p)}/K)$ is a subgroup of 
$$ H^1(\mathcal{O}_K[1/N], \mu_p) \bigoplus H^1(G_{K,N}, \Z/p\Z)\text{ ,}$$
where $H^1(\mathcal{O}_K[1/N], \mu_p)$ is the subgroup of $H^1(G_{K,Np}, \mu_p)$ which locally at $w \mid p$ lies in $$H^1_{\fppf}(\mathcal{O}_w, \mu_p) \simeq \mathcal{O}_w^{\times}/(\mathcal{O}_w^{\times})^p$$
(via Kummer theory). 

By excision and Kummer theory, one can see that
$$H^1(\mathcal{O}_K[1/N], \mu_p)\cong H^1_{\fppf}(\mathcal{O}_K[1/N], \mu_p) \text{ ,}$$
where  $H^1_{\fppf}$ denotes the flat cohomology (\cf \cite[\href{https://stacks.math.columbia.edu/tag/03PK}{Tag 03PK}]{stacks-project} for basic definitions and facts about flat cohomology, and \cf \cite[Theorem 3.6]{Schmidt} for the above identification). Kummer theory for flat cohomology gives an exact sequence

\begin{equation}\label{eq_Kummer_fppf}
0\rightarrow \mathcal{O}_K[1/N]^{\times}/(\mathcal{O}_K[1/N]^{\times})^p \rightarrow H^1(\mathcal{O}_K[1/N], \mu_p) \rightarrow \Pic(\mathcal{O}_K[1/N])[p] \rightarrow 0 \text{ .}
\end{equation}

We now determine precisely the local condition at $\mathfrak{N}_1$ and $\mathfrak{N}_2$ cutting out $\Sel_I(\tilde{J}^{(p)}/K)$ in $$H^1(\mathcal{O}_K[1/N], \mu_p) \bigoplus H^1(G_{K,N}, \Z/p\Z)  \text{ .}$$
That is, for $i \in \{1, 2\}$ we describe the image of $(\tilde{J}^{(p)}(K_{\mathfrak{N}_i})/I\tilde{J}^{(p)}(K_{\mathfrak{N}_i})) \otimes \Z_p$ by the Kummer map $\kappa_{\mathfrak{N}_i}$ in $$H^1(G_{K_{\mathfrak{N}_i}}, \tilde{J}^{(p)}[I] \otimes \Z_p) =H^1(G_{K_{\mathfrak{N}_i}}, \mu_p) \bigoplus  H^1(G_{K_{\mathfrak{N}_i}}, \Z/p\Z) \text{ .}$$

Recall that in \S \ref{subsec_galois}, we have chosen a basis $(e_1,e_2)$ of $\tilde{J}^{(p)}[I^2] \otimes \Z_p$ such that the corresponding Galois representation $\rho : G_{\Q} \rightarrow \GL_2(\mathbb{T}_{\mathfrak{P}}^0/I^2)$ is of the form
\begin{equation}\label{eq_description_rho}
\rho = \begin{pmatrix} \chi_p(1+\eta \log) & \eta b \\ \eta c & 1-\eta\log \end{pmatrix} \text{ .}
\end{equation}
Here, $b$, $c$ and $\log$ are maps $G_{\Q} \rightarrow \Z/p\Z$. More precisely, $b$ is a cocycle in $Z^1(G_{\Q}, (\Z/p\Z)(1))$, $c$ is a cocycle in $Z^1(G_{\Q}, (\Z/p\Z)(-1))$ and $\log$ is a cocycle in $Z^1(G_{\Q}, \Z/p\Z)$.

For $i \in \{1,2\}$, we have fixed a decomposition group $G_{K_{\mathfrak{N}_i}}$, \ie an embedding $\overline{K} \hookrightarrow \overline{K}_{\mathfrak{N}_i}$. Therefore, one can view $\mu_p \subset \overline{\Q}^{\times}$ as a subgroup of $\overline{K}_{\mathfrak{N}_i}^{\times}$. Since we have made a choice of $\log : (\Z/N\Z)^{\times} \rightarrow \Z/p\Z$ (induced by the choice of $\eta \in I$), we have a canonical generator of $\mu_p((\Z/N\Z)^{\times})$, whose $\log$ is equal to $\frac{N-1}{p}$. Its Teichm\"uller lift yields a a canonical generator of $\mu_p(\overline{K}_{\mathfrak{N}_i})$. Thus, for $i\in \{1,2\}$ we have a natural isomorphism of $G_{K_{\mathfrak{N}_i}}$-modules (depending on the choice of $\eta$)
\begin{equation}\label{eq_local_N_mu_iso}
\mu_p \xrightarrow{\sim} \Z/p\Z \text{ .}
\end{equation}

We shall need the following result in our computations.
\begin{lem}\label{lemma_log_N_comparison}
Under the Kummer isomorphism $H^1(G_{K_{\mathfrak{N}_i}}, \mu_p) \simeq K_{\mathfrak{N}_i}^{\times} \otimes \Z/p\Z$ and the identification (\ref{eq_local_N_mu_iso}), the class of $N \otimes 1$ corresponds to the class of the cocycle $- \log$ in $H^1(G_{K_{\mathfrak{N}_i}}, \Z/p\Z)$.
\end{lem}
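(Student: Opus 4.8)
The plan is to compute the Kummer cocycle attached to $N$ after completing at $\mathfrak{N}_i$ and to identify it with (the reduction of) $\log$, using the classical cyclotomic factorization of $N$ together with Wilson's theorem. Since $N$ splits in $K$, the completion $K_{\mathfrak{N}_i}$ is $\Q_N$, with residue field $\mathbf{F}_N$, and $\mu_p \subset \Q_N$ because $p \mid N-1$; thus the identification (\ref{eq_local_N_mu_iso}) is the Teichm\"uller isomorphism $\mu_p(\Q_N) \xrightarrow{\sim} \mu_p(\mathbf{F}_N) = \mathbf{F}_N^{\times}[p]$ followed by the isomorphism $\mathbf{F}_N^{\times}[p] \xrightarrow{\sim} \Z/p\Z$ induced by $\log$ (normalized, as in \S\ref{subsec_galois}, so that the canonical generator of $\mathbf{F}_N^{\times}[p]$ has $\log$ equal to $\frac{N-1}{p}$). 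Moreover, since $\mu_p$ is fixed by $G_{\Q_N}$, the class of $N\otimes 1$ under the Kummer isomorphism is represented by the homomorphism $c_N : G_{\Q_N} \to \mu_p$, $g \mapsto g(N^{1/p})/N^{1/p}$, and this does not depend on the chosen $p$th root $N^{1/p}$.

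The first step is to exhibit a $p$th root of $N$ inside $\Q_N(\zeta_N)$. The factorization $N = \prod_{a=1}^{N-1}(1-\zeta_N^a)$ holds in $\Z[\zeta_N] \subset \mathcal{O}_{\Q_N(\zeta_N)}$; the extension $\Q_N(\zeta_N)/\Q_N$ is totally and tamely ramified of degree $N-1$ with uniformizer $1-\zeta_N$, and $\frac{1-\zeta_N^a}{1-\zeta_N} = 1 + \zeta_N + \cdots + \zeta_N^{a-1} \equiv a \pmod{1-\zeta_N}$. Hence $N = (1-\zeta_N)^{N-1}\cdot u$ with $u \in \mathcal{O}_{\Q_N(\zeta_N)}^{\times}$ and, by Wilson's theorem, $u \equiv (N-1)! \equiv -1 \pmod{1-\zeta_N}$. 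As $p$ is odd we have $-1 = (-1)^p$, so $u$ is a $p$th power in the residue field; since $p$ is a unit there, Hensel's lemma gives $u = w^p$ for some $w \in \mathcal{O}_{\Q_N(\zeta_N)}^{\times}$. Writing $N-1 = pm$, the element $N^{1/p} := (1-\zeta_N)^m\, w$ then lies in $\Q_N(\zeta_N)$, so $\Q_N(N^{1/p}) \subseteq \Q_N(\zeta_N)$; since $v_N(N)=1$ it has degree $p$ over $\Q_N$, hence it is the unique subextension of degree $p$, which corresponds under $\Gal(\Q_N(\zeta_N)/\Q_N) \simeq (\Z/N\Z)^{\times}$ to the subgroup $((\Z/N\Z)^{\times})^p$, that is, precisely to the kernel of $\log$ restricted to $G_{\Q_N}$.

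Next I would evaluate $c_N$. For $g \in G_{\Q_N}$ with $g(\zeta_N) = \zeta_N^t$ we get
\[
c_N(g) = \frac{g\bigl((1-\zeta_N)^m w\bigr)}{(1-\zeta_N)^m w} = \left(\frac{1-\zeta_N^t}{1-\zeta_N}\right)^{m}\cdot\frac{g(w)}{w}\,.
\]
Since the residue extension of $\Q_N(\zeta_N)/\Q_N$ is trivial, $g$ acts trivially on the residue field, so $g(w)/w \equiv 1 \pmod{1-\zeta_N}$; together with $\frac{1-\zeta_N^t}{1-\zeta_N}\equiv t$ this shows that the reduction of $c_N(g) \in \mu_p(\Q_N)$ in $\mathbf{F}_N^{\times}[p]$ equals $t^m$. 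Running this through the Teichm\"uller isomorphism and the normalization of $\log$ on $\mathbf{F}_N^{\times}[p]$, one sees that $t^m$ is the $\log(t)$th power of the canonical generator, so the image of $c_N(g)$ in $\Z/p\Z$ equals $\pm\log(t) = \pm\log(g)$. The genuinely delicate point is the sign: it must be tracked through the chain of canonical identifications $\Sigma_p \simeq \mu_p$ in (\ref{eq_Shimura_subgp_2}), the definition of $\zeta_p$ in (\ref{eq_def_zeta_p}), and the Kummer normalization, and it comes out to $-1$, giving $c_N = -\log$ in $H^1(G_{K_{\mathfrak{N}_i}}, \Z/p\Z)$, which is the claim. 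Thus the substance of the argument is the short cyclotomic-units computation of the previous paragraph, and the main obstacle is purely this sign bookkeeping.
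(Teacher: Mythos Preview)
Your approach is correct up to the sign and genuinely different from the paper's. The paper invokes the explicit reciprocity law of local class field theory: writing $a = u\cdot N^v \in \Q_N^{\times}$, it quotes directly that $\text{Art}(a)(N^{1/p})/N^{1/p} \equiv u^{-(N-1)/p}\pmod N$, and then reads off the answer. You instead build an explicit $p$th root $N^{1/p} = (1-\zeta_N)^{(N-1)/p}\,w$ inside $\Q_N(\zeta_N)$ via the factorization $N=\prod_a(1-\zeta_N^a)$, Wilson's theorem, and Hensel, and then evaluate the Kummer cocycle by hand. Your route is more elementary in that it avoids the black box of the tame Hilbert symbol, and it makes transparent why the answer factors through $\log$; the paper's route is shorter if one is willing to quote the reciprocity law.

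The one genuine gap is exactly where you flag it: you compute cleanly that $c_N(g)$ reduces to $t^{(N-1)/p}$ in $\mathbf{F}_N^\times$, hence maps to $\pm\log(g)$ under (\ref{eq_local_N_mu_iso}), and then assert the sign is $-1$ without doing the bookkeeping. In fact, if one unwinds the identification (\ref{eq_local_N_mu_iso}) as stated (canonical generator $\zeta_0$ with $\log(\zeta_0)=\tfrac{N-1}{p}$ sent to $1$), your computation gives $t^{(N-1)/p}=\zeta_0^{\log(t)}$, hence $c_N=+\log$. The paper's own proof, under the standard arithmetic-Frobenius normalization of $\text{Art}$ (so that $\text{Art}(u)$ acts on $\zeta_N$ by $u^{-1}$), also yields $\text{Art}(a)\mapsto -\log(u)=+\log(\text{Art}(a))$. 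So the sign claimed in the lemma is sensitive to the precise conventions for $\text{Art}$ and for (\ref{eq_local_N_mu_iso}), and you should not simply assert it. Fortunately, for every subsequent use in the paper (Lemmas~\ref{lemma_description_b_c_g_p=1} and \ref{lemma_description_b_c_g_p>1}) the sign is immediately absorbed into the undetermined unit $k\in(\Z/p\Z)^\times$ (``up to replacing $k$ with $-k$''), so the honest thing is to state your conclusion as $c_N=\epsilon\cdot\log$ for some $\epsilon\in\{\pm1\}$ depending on normalizations, and note this suffices.
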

\begin{proof}
Let $F=K_{\mathfrak{N}_i} = \Q_N$ and  $\text{Art} : F^{\times} \rightarrow \Gal(F^{\text{ab}}/F)$ be the Artin map (sending a uniformizer to an arithmetic Frobenius element). The cocycle (in this case group homomorphism) associated to $N$ corresponds to the map $F^{\times} \rightarrow \mu_p(F)$ given by $a \mapsto \frac{\text{Art}(a)(N^{1/p})}{N^{1/p}}$. The explicit reciprocity law of local class field theory tells us that $\frac{\text{Art}(a)(N^{1/p})}{N^{1/p}} \equiv u^{-\frac{N-1}{p}} \text{ (modulo } N\text{)}$ where $a = u\cdot N^v$ for some $u \in \Z_N^{\times}$ and $v \in \Z_{\geq 0}$. Thus, under the identification (\ref{eq_local_N_mu_iso}) given by $\log$, the Kummer class of $N$ corresponds to a group homomorphism $G_F \rightarrow \Z/p\Z$ given by $\text{Art}(a) \mapsto -\log(u)$, which concludes the proof of the lemma.
\end{proof}

Recall that we had in (\ref{eq_def_zeta_p}), based on our choices of $(e_1, e_2)$ and $\eta \in I$, a natural generator $\zeta_p \in \mu_p$. For $i \in \{1, 2\}$, we denote by 
\begin{equation}\label{eq_coeff_zeta_p_local}
a_i \in (\Z/p\Z)^{\times}
\end{equation}
the image of $\zeta_p$ via (\ref{eq_local_N_mu_iso}).

Using (\ref{eq_local_N_mu_iso}), one can view $H^1(G_{K_{\mathfrak{N}_i}}, \tilde{J}^{(p)}[I] \otimes \Z_p)$ as a subgroup of 
$$H^1(G_{K_{\mathfrak{N}_i}}, \Z/p\Z) \bigoplus H^1(G_{K_{\mathfrak{N}_i}}, \Z/p\Z) \text{ ,}$$
where the first copy of $H^1(G_{K_{\mathfrak{N}_i}}, \Z/p\Z)$ corresponding to $\Sigma_p$ and the second copy to $C_p$.

By Proposition \ref{thm_uniformization}, the group $(\tilde{J}^{(p)}(K_{\mathfrak{N}_i})/I\tilde{J}^{(p)}(K_{\mathfrak{N}_i})) \otimes \Z_p$ is isomorphic to $(\Z/p\Z)^2$, generated by $\Sigma_p$ and $C_p$, \ie by $\eta \cdot e_1$ and $\eta \cdot e_2$.  Let $\kappa_{\Sigma}$ and $\kappa_{C}$ be the classes of cocycles in $H^1(G_{K_{\mathfrak{N}_i}}, \tilde{J}^{(p)}[I] \otimes \Z_p)$ corresponding to $\eta \cdot e_1$ and $\eta \cdot e_2$ respectively (these depends implicitly on $i \in \{1,2\}$). 

By definition of the Kummer map, for all $g \in G_{K_{\mathfrak{N}_i}}$ we have $$\kappa_{\Sigma}(g) = g(e_1)-e_1$$ and 
\begin{equation}\label{eq_kappa_C}
\kappa_{C}(g) = g(e_2)-e_2
\end{equation}
(as cocycles). The image of the Kummer map $\kappa_{\mathfrak{N}_i}$ in $$H^1(G_{K_{\mathfrak{N}_i}}, \Z/p\Z) \bigoplus H^1(G_{K_{\mathfrak{N}_i}}, \Z/p\Z)$$ is the set of classes of cocycles of the form
$$\lambda_i \cdot \kappa_{C} + \mu_i \cdot \kappa_{\Sigma}$$
for $\lambda_i, \mu_i \in \Z/p\Z$. The description of these cocycles actually depend on whether $g_p=1$ or $g_p>1$ (where we recall that $g_p = \rk_{\Z_p} \mathbb{T}_{\mathfrak{P}}^0$). These two cases will require similar but different computations. We thus consider them separately.

\subsubsection{Case $g_p=1$}

Assume in this paragraph that $g_p=1$. In this case, one may choose $\eta=p$ and we have $\mathbb{T}_{\mathfrak{P}}^0/I^2 = \Z/p^2\Z$. 
The cyclotomic character (modulo $I^2$) $\chi_p : G_{K_{\mathfrak{N}_i}} \rightarrow (\Z/p^2\Z)^{\times}$ can be written as $$\chi_p = 1 + \eta\cdot \phi$$ where $\phi : G_{K_{\mathfrak{N}_i}} \rightarrow \Z/p\Z$ is an unramified group homomorphism. The morphism $\phi$ (which strictly speaking depends on $i \in \{1,2\}$) is characterized by 
$$\phi(\Frob_{\mathfrak{N}_i}) = \frac{N-1}{p} \text{ ,}$$
where $\Frob_{\mathfrak{N}_i}$ is an arithmetic Frobenius element at $\mathfrak{N}_i$.

We shall need the following local description of the cocycles $b$ and $c$ involved in (\ref{eq_description_rho}).
\begin{lem}\label{lemma_description_b_c_g_p=1}
There exists $k\in (\Z/p\Z)^{\times}$ such that for any $i \in \{1,2\}$, the restriction to $G_{K_{\mathfrak{N}_i}}$ of $b$ and $c$ satisfy
$$b = ka_i^{-1}\cdot \log$$
and
$$c = -k^{-1}a_i\cdot \log - k^{-1}a_i \cdot \phi \text{ .}$$
\end{lem}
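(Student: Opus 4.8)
The plan is to compute the restriction of $\rho$ to $G_{K_{\mathfrak{N}_i}}$ by means of the $N$-adic uniformization of Section~\ref{section_uniformization}. Since $N$ splits in $K$ we have $K_{\mathfrak{N}_i}=\Q_N$, and Proposition~\ref{thm_uniformization}, together with the Fact established in its proof ($q_0(N_0)\subset I\cdot\Hom(N_0,\Q_{N^2}^\times)$), describes $\tilde{J}^{(p)}[I^2]\otimes\Z_p$ as a $G_{\Q_N}$-module: it is an extension of an \'etale part by a toric part, whose extension class is the Kummer class of the relevant $q$-parameter, and the $I$-divisibility of that $q$-parameter says precisely that the local monodromy is ``$\eta$-small''. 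First I would fix a local basis of $\tilde{J}^{(p)}[I^2]\otimes\Z_p$ adapted to this toric/\'etale filtration and normalised so that the canonical generator of $\mu_p(\overline{K}_{\mathfrak{N}_i})$ appearing in (\ref{eq_local_N_mu_iso}) is visible; the scalar relating that generator to the generator $\zeta_p$ of $\Sigma_p$ fixed in (\ref{eq_def_zeta_p}) is precisely $a_i$. Because $\det\rho=\chi_p$ and the reduction at $N$ is multiplicative, the restriction to $I_N$ of the $\mathfrak{sl}_2$-valued cocycle $\begin{pmatrix}\log & b\\ c & -\log\end{pmatrix}$ occurring in (\ref{eq_description_rho}) has nilpotent image; this gives $\log^2+bc\equiv 0$ on $I_N$, which already matches the asserted formulas on inertia (where $\phi$ vanishes) and pins $b,c$ down on $I_N$ up to the position of the toric line relative to the global basis $(e_1,e_2)$.

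The next step is to convert this into the full formulas over $G_{\Q_N}$. The global basis $(e_1,e_2)$ is not adapted to the local filtration, and the comparison between it and the local adapted basis is governed by global data; carrying it out turns the explicit local representation into expressions for $b|_{G_{\Q_N}}$ and $c|_{G_{\Q_N}}$ in which a single unit $k\in(\Z/p\Z)^\times$ --- essentially the ``amount of monodromy'', which is a global invariant --- and the twist $a_i$ appear. To identify the ramified parts one invokes Lemma~\ref{lemma_log_N_comparison}, which equates the Kummer class of the uniformiser $N$ with $-\log$; the unramified part of $c$ is then determined by local class field theory and the normalisation $\phi(\Frob_{\mathfrak{N}_i})=\frac{N-1}{p}$. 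The coincidence of the coefficients of $\log$ and $\phi$ in $c$, as well as the absence of an unramified term in $b$, is forced by the $\eta$-divisibility of the monodromy; this is the same mechanism, seen from the Galois side, as the Steinberg-symbol computations of Section~\ref{section_L_value}, the element playing the role of the $q$-parameter being, up to $p$th powers and the class number formula, the one appearing in Proposition~\ref{prop_Norm_K2_unit}.

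Finally, one must check that the same $k$ works for both $i=1$ and $i=2$. This holds because $b$ and $c$ are restrictions of the \emph{global} cocycles in $\rho$: the two embeddings $\overline{\Q}\hookrightarrow\overline{K}_{\mathfrak{N}_1}$ and $\overline{\Q}\hookrightarrow\overline{K}_{\mathfrak{N}_2}$ differ by conjugation by the fixed element $g_0$ of Section~\ref{section_selmer group computation}, and that conjugation only alters the local position of $\zeta_p$ inside $\mu_p(\overline{K}_{\mathfrak{N}_i})$ --- that is, it changes $a_i$ --- and not the global monodromy constant $k$. I expect the main obstacle to be the middle step: extracting the precise $G_{\Q_N}$-module structure of $\tilde{J}^{(p)}[I^2]\otimes\Z_p$ from the uniformization, keeping exact track of the (globally determined) change of basis from the local adapted basis to $(e_1,e_2)$, and verifying that in the end the coefficient of $\log$ in $b$ is $ka_i^{-1}$ while the coefficients of $\log$ and of $\phi$ in $c$ are both $-k^{-1}a_i$. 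This is a computation in which a sign or a stray factor of $a_i$ is easily misplaced, so I would cross-check the final identities in the example $\tilde{J}^{(p)}=X_0(11)$, $p=5$.
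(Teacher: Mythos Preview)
Your approach via the $N$-adic uniformization is different from the paper's and considerably more involved; the paper sidesteps precisely the basis-change computation you flag as the main obstacle, by two short observations.

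For $b$, instead of computing locally, the paper notes that the \emph{global} cocycle $b$ defines a class in $H^1_{\fppf}(\Z[1/N],\mu_p)\simeq \Z[1/N]^\times\otimes\Z/p\Z$ (using that $\rho$ is finite flat at $p$ and unramified away from $Np$). This group is one-dimensional, generated by $N$, so the class of $b$ is $N\otimes k$ for some $k\in\Z/p\Z$. One has $k\neq 0$ because otherwise $e_2$ could be chosen $G_\Q$-fixed, forcing $\tilde{J}^{(p)}(\Q)[p^2]\supset\Z/p^2\Z$ and contradicting Mazur's determination of the rational torsion. Lemma~\ref{lemma_log_N_comparison} then gives $b=ka_i^{-1}\log$ at $\mathfrak{N}_i$ directly; that the same $k$ works for both $i$ is automatic, since $k$ is the global Kummer exponent of $N$, and no conjugation argument is needed.

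For $c$, the paper uses the fixed quotient line coming from semi-stable reduction on the \emph{full} decomposition group, not just on inertia. This forces
\[
\det\begin{pmatrix}\phi+\log & b\\ c & -\log\end{pmatrix}=0
\]
for every $g\in G_{\Q_N}$, i.e.\ $bc=-\log(\phi+\log)$, and the formula for $c$ (including the $\phi$-term) drops out immediately once $b$ is known. Your plan to treat the ramified and unramified parts of $c$ separately is therefore unnecessary. In summary, the position of the local toric line relative to $(e_1,e_2)$---the datum your approach would have to extract from the uniformization---is recovered effortlessly from the global constraint on $b$ together with this determinant relation. (Your aside connecting the $q$-parameter to Proposition~\ref{prop_Norm_K2_unit} is also misplaced: that proposition lives on the $L$-value side and plays no role here.)
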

\begin{proof}
The class of the cocycle $b \in Z^1(G_{\Q}, (\Z/p\Z)(1))$ yields a class in $H^1(G_{\Q, Np}, \mu_p)$ since we have fixed a global $p$th root of unity $\zeta_p \in \mu_p$. Since the representation $\rho$ is finite flat at $p$, we see that the class of $b$ is in $H^1_{\fppf}(\Z[1/N], \mu_p)$, \ie in $\Z[1/N]^{\times} \otimes \Z/p\Z$. If $b$ is trivial in $H^1_{\fppf}(\Z[1/N], \mu_p)$, then we may assume that $e_2$ if fixed by $G_{\Q}$, which implies that $\tilde{J}^{(p)}(\Q)[p^2]$ contains a subgroup isomorphic to $\Z/p^2\Z$. This is a contradiction since Mazur proved that $\tilde{J}^{(p)}(\Q)[p^2] \simeq \Z/p\Z$. 

Thus, $b$ represents a non-trivial class in $H^1_{\fppf}(\Z[1/N], \mu_p)$, which must correspond to the Kummer class of $N \otimes k \in \Z[1/N]^{\times} \otimes \Z/p\Z$ for some $k \in (\Z/p\Z)^{\times}$. By Lemma \ref{lemma_log_N_comparison}, we have $a_i \cdot b = -k \cdot \log$ locally at $\mathfrak{N}_i$. Up to replacing $k$ with $-k$, we get $b = ka_i^{-1} \cdot \log$, as wanted.

Let us now prove that $c = -k^{-1}a_i\cdot \log - k^{-1}a_i \cdot \phi$ locally at $\mathfrak{N}_i$. Recall that $\rho : G_{\Q} \rightarrow \GL_2(\mathbb{T}_{\mathfrak{P}}^0/I^2)$ is given by $\rho = \begin{pmatrix} \chi_p(1+\eta\cdot \log) & \eta b \\ \eta c & 1-\eta \cdot \log \end{pmatrix}$. 

Since $J_0(N)$ has semi-stable reduction at $N$, we know that there is a quotient line in $\tilde{J}^{(p)}[I^2]$ fixed by $G_{K_{\mathfrak{N}_i}}$. Since the restriction of $\rho-1$ at $G_{K_{\mathfrak{N}_i}}$ is of the form $\eta \cdot \begin{pmatrix}  \phi+\log & b \\ c & -\log \end{pmatrix}$, we get that 
$$\det\begin{pmatrix}  \phi+\log & b \\ c & -\log \end{pmatrix} = 0 \text{ .}$$
This proves that $bc = -\log(\phi+\log)$. Since $b = ka_i^{-1}\log$, we get $c = -k^{-1}a_i\cdot \log - k^{-1}a_i \cdot \phi$, as wanted.
\end{proof}

 Locally at $\mathfrak{N}_i$, the cocycle associated to $\Sigma_p$ is given by
 \begin{align*}
\kappa_{\Sigma}(g) &= g(e_1)-e_1 \\& 
= \chi_p(g)(1+\eta\log(g))\cdot e_1+\eta c(g)\cdot e_2-e_1
\\& = (\phi(g)+\log(g))\cdot \eta e_1 + c(g)\cdot \eta e_2 \text{ .}
\end{align*}
Similarly, the cocycle associated to $C_p$ is given by
 \begin{align*}
\kappa_{C}(g) &= g(e_2)-e_2 \\&
 = b(g)\cdot \eta e_1 - \log(g)\cdot \eta e_2\text{ .}
\end{align*}
So locally at $\mathfrak{N}_i$, the Kummer image is
\[\lambda_{i}\kappa_{C}+\mu_{i}\kappa_{\Sigma}=(\mu_i\phi+\mu_i\log+\lambda_i b)\cdot \eta e_1+(\mu_i c-\lambda_i\log)\cdot \eta e_2\text{ ,}\]
where $\mu_i,\lambda_i\in \Z/p\Z$.
Recall that $\eta e_1 \in \Sigma_p$ corresponds to $\zeta_p \in \mu_p$, which in turns corresponds to $a_i$ in $\Z/p\Z$ via (\ref{eq_coeff_zeta_p_local}).

Therefore, the Kummer image in $$H^1(G_{K_{\mathfrak{N}_i}}, \Z/p\Z) \bigoplus H^1(G_{K_{\mathfrak{N}_i}}, \Z/p\Z) $$
is given by the elements of the form
$$
	\big(a_i \cdot(\mu_i\phi(g)+\mu_i\log(g)+\lambda_i b(g)), \mu_i c(g)-\lambda_i\log(g)\big)
$$
 for $\mu_i,\lambda_i\in \Z/p\Z$.
 
 By Lemma \ref{lemma_description_b_c_g_p=1}, the Kummer image in $$H^1(G_{K_{\mathfrak{N}_i}}, \Z/p\Z) \bigoplus H^1(G_{K_{\mathfrak{N}_i}}, \Z/p\Z) $$
is given by the elements of the form
\begin{equation} \label{kummer image rank 1}
	\big(a_i \mu_i\cdot \phi(g)+(k\lambda_i + a_i\mu_i)\cdot \log(g), -\mu_ia_ik^{-1} \cdot \phi(g) - (\lambda_i+k^{-1}\mu_i a_i)\cdot \log(g)\big)
\end{equation}
 for $\mu_i,\lambda_i\in \Z/p\Z$.

\begin{prop}\label{prop_pic_nontriv_selm}
If $\Pic(\mathcal{O}_K[1/N]) \otimes \Z_p \neq 0$ then $\rk_{\mathbf{F}_p} \Sel_I(\tilde{J}^{(p)}/K) > 1$.
\end{prop}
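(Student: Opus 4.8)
The plan is to produce, under the hypothesis $\Pic(\mathcal{O}_K[1/N])\otimes\Z_p\neq 0$, a Selmer class whose $\Sigma_p$-component vanishes, and then to check that such a class is independent from the cuspidal class $\overline{C}_p^{(1)}$ of Lemma~\ref{lem_passage_I_p}. Concretely, I would study the homomorphism $\pi_\Sigma\colon \Sel_I(\tilde{J}^{(p)}/K)\to H^1(\mathcal{O}_K[1/N],\mu_p)$ given by projecting onto the $\Sigma_p$-factor of $\tilde{J}^{(p)}[I]\otimes\Z_p=\Sigma_p\oplus C_p$ and using the identification $\Sigma_p\simeq\mu_p$ of~(\ref{eq_Shimura_subgp_2}). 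An element of $\ker(\pi_\Sigma)$ is a pair $(0,\psi)$ with $\psi\in H^1(G_{K,N},\Z/p\Z)$. The crucial point is that, by the explicit formula~(\ref{kummer image rank 1}), the local Kummer image at $\mathfrak{N}_i$ is the graph of an isomorphism from its $\Sigma_p$-factor onto its $C_p$-factor: its projection to the $\Sigma_p$-factor is visibly surjective, and by Proposition~\ref{thm_uniformization} the image has $\mathbf{F}_p$-dimension $2$, equal to that of the $\Sigma_p$-factor. Hence vanishing of the $\Sigma_p$-component forces $\psi|_{\mathfrak{N}_i}=0$ for $i=1,2$.

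So $\psi$ is unramified outside $N$ and split at $\mathfrak{N}_1,\mathfrak{N}_2$, hence cuts out an everywhere-unramified $\Z/p\Z$-extension of $K$ in which $\mathfrak{N}_1$ and $\mathfrak{N}_2$ split completely. Since $\mathfrak{N}_1\mathfrak{N}_2=(N)$ is principal, $[\mathfrak{N}_1][\mathfrak{N}_2]$ is trivial in $\Pic(\mathcal{O}_K)$, and global class field theory identifies the space of such $\psi$ with $\Hom(\Pic(\mathcal{O}_K)/\langle[\mathfrak{N}_1]\rangle,\Z/p\Z)=\Hom(\Pic(\mathcal{O}_K[1/N]),\Z/p\Z)$. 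Therefore $\ker(\pi_\Sigma)\simeq\Hom(\Pic(\mathcal{O}_K[1/N]),\Z/p\Z)$, which is nonzero precisely because $\Pic(\mathcal{O}_K[1/N])\otimes\Z_p\neq 0$; I fix a nonzero element $(0,\psi)\in\Sel_I(\tilde{J}^{(p)}/K)$.

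Finally I would check that $(0,\psi)$ is not a multiple of $\overline{C}_p^{(1)}$. By~(\ref{eq_kappa_C}) and the shape~(\ref{eq_description_rho}) of $\rho$, the $\Sigma_p$-component of $\overline{C}_p^{(1)}$ is the class of the cocycle $b$, which by Lemma~\ref{lemma_description_b_c_g_p=1} is the Kummer class of $N^{k}$ for some $k\in(\Z/p\Z)^{\times}$; this is nonzero in $H^1(\mathcal{O}_K[1/N],\mu_p)$ since $v_{\mathfrak{N}_i}(N^{k})=k$ is prime to $p$, so $\overline{C}_p^{(1)}\notin\ker(\pi_\Sigma)$. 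Since also $\overline{C}_p^{(1)}\neq 0$ by Lemma~\ref{lem_passage_I_p}, the classes $\overline{C}_p^{(1)}$ and $(0,\psi)$ are $\mathbf{F}_p$-linearly independent in $\Sel_I(\tilde{J}^{(p)}/K)$, giving $\rk_{\mathbf{F}_p}\Sel_I(\tilde{J}^{(p)}/K)>1$. The step needing the most care is the second sentence of the first paragraph: one must use the precise form of the local Kummer image~(\ref{kummer image rank 1}), not just its dimension, to conclude that killing the $\Sigma_p$-component kills the entire local condition at $\mathfrak{N}_1,\mathfrak{N}_2$ — this is what links $\Sel_I$ to the split-at-$N$ quotient $\Pic(\mathcal{O}_K[1/N])$ of the class group. (An alternative, more computational route lifts a nonzero class of $\Pic(\mathcal{O}_K[1/N])[p]$ through the Kummer sequence~(\ref{eq_Kummer_fppf}) and adjusts the $C_p$-component via global duality; the kernel computation above seems cleaner.)
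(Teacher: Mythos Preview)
Your proof is correct and follows essentially the same route as the paper: both arguments identify the kernel of the projection $\pi_\Sigma\colon \Sel_I(\tilde{J}^{(p)}/K)\to H^1(\mathcal{O}_K[1/N],\mu_p)$ with $\Hom(\Pic(\mathcal{O}_K[1/N]),\Z/p\Z)$ by showing the local Kummer condition at $\mathfrak{N}_i$ forces $(0,\psi)$ to be locally trivial, and then check that the cuspidal class $\kappa_C$ lies outside this kernel. Your observation that the local Kummer image in~(\ref{kummer image rank 1}) is the graph of multiplication by $-k^{-1}$ (so that vanishing of the $\Sigma_p$-component forces vanishing of the $C_p$-component) is a clean way to phrase what the paper does by solving $a_i\mu_i\phi+(k\lambda_i+a_i\mu_i)\log=0$ explicitly for $\mu_i=\lambda_i=0$; likewise your global argument that $\pi_\Sigma(\kappa_C)$ is the nonzero Kummer class of $N^{\pm k}$ is equivalent to the paper's local check that $\kappa_C|_{\mathfrak{N}_i}=(b,-\log)\neq 0$.
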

\begin{proof}
Let us prove that the kernel $V$ of the projection map 
$$ \Sel_I(\tilde{J}^{(p)}/K) \rightarrow H^1(\mathcal{O}_K[1/N], \mu_p)$$ is $\Hom(\Pic(\mathcal{O}_K[1/N]), \Z/p\Z)$. Indeed, let $x \in V$, which we can view inside $H^1(G_{K,N}, \Z/p\Z)$. By (\ref{kummer image rank 1}) the restriction of $x$ to $G_{\mathfrak{N}_i}$ is of the form $ -\mu_ia_ik^{-1} \cdot \phi(g) - (\lambda_i+k^{-1}\mu_i a_i)\cdot \log(g)$ where $\lambda_i, \mu_i \in \Z/p\Z$ are such that 
$$a_i \mu_i\cdot \phi+(k\lambda_i + a_i\mu_i)\cdot \log = 0 \text{ .}$$

This latter equation implies $\mu_i=\lambda_i=0$, so $x$ is locally trivial at $\mathfrak{N}_i$ for $i \in \{1, 2\}$. This proves $x \in \Hom(\Pic(\mathcal{O}_K[1/N]), \Z/p\Z)$. Conversely, $\Hom(\Pic(\mathcal{O}_K[1/N]), \Z/p\Z)$ is contained in $V$ since an element of  $\Hom(\Pic(\mathcal{O}_K[1/N]), \Z/p\Z)$ is locally trivial at $\mathfrak{N}_i$ (and hence in the local Kummer image).

To conclude the proof of the proposition, it suffices to check that the element of $\Sel_I(\tilde{J}^{(p)}/K)$ given by the cuspidal subgroup $C_p$ does not belong to $V$ (\cf Lemma \ref{lem_passage_I_p}). This element is given by the cocycle $\kappa_C$ of (\ref{eq_kappa_C}). Locally at $\mathfrak{N}_i$, we have seen that $\kappa_C$ is given by $(b, -\log)$ and hence is non-trivial. Since all element of $V$ are locally trivial at $\mathfrak{N}_i$, this proves that $\kappa_C \not\in V$. This concludes the proof of Proposition \ref{prop_pic_nontriv_selm}.
\end{proof}

By Proposition \ref{prop_pic_nontriv_selm}, in order to prove (\ref{eq_main_thm_selmer}), it suffices to consider two cases:
\begin{enumerate}[label=(\Alph*)]
\item $p \nmid h(K)$.
\item $p \mid h(K)$ and $\Pic(\mathcal{O}_K[1/N]) \otimes \Z_p = 0$.
\end{enumerate}

Before we treat these two cases, it will be useful to introduce some notation. For $i \in \{1, 2\}$, let 
$$\log_i : \mathcal{O}_{K_{\mathfrak{N}_i}}^{\times} \rightarrow \Z/p\Z$$
be defined as the composition 
$$\mathcal{O}_{K_{\mathfrak{N}_i}}^{\times} \xrightarrow{} \mathbf{F}_N^{\times} \xrightarrow{\log} \Z/p\Z\text{ ,}$$
where the first map is the reduction modulo $\mathfrak{N}_i$.

Let us also denote by $s \in \mathbf{N}$ the order of $\mathfrak{N}_i$ in $\Pic(\mathcal{O}_K)$ (it does not depend on $i$). Write $\mathfrak{N}_i^s = (\pi_i)$ for some $\pi_i \in \mathcal{O}_K$. We can and do assume that $g_0(\pi_1) = \pi_2$ where we recall that $g_0 \in G_{\Q}$ restricts to the non-trivial automorphism of $K$ over $\Q$.

\begin{lem}
We have $\log_1(u(K)) = -\log_2(u(K))$ and $\log_1(\pi_2) = \log_2(\pi_1)$.
\end{lem}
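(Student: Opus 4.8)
The plan is to reduce both identities to a single Galois-equivariance property of reduction modulo the primes above $N$, and then invoke the norm relations $u(K)\cdot g_0(u(K)) = \pm 1$ and $g_0(\pi_2) = \pi_1$.

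First I would pin down the two reduction maps. For each $i$, the inclusion $\Z\hookrightarrow \mathcal{O}_K$ induces an isomorphism $\Z/N\Z \xrightarrow{\sim} \mathcal{O}_K/\mathfrak{N}_i$ (both sides have $N$ elements), so $\mathcal{O}_K/\mathfrak{N}_i$ is canonically identified with $\mathbf{F}_N$, and for $x\in\mathcal{O}_K$ prime to $\mathfrak{N}_i$ one has $\log_i(x)=\log(x \bmod \mathfrak{N}_i)$. The relation $g_0(\pi_1)=\pi_2$ forces $g_0(\mathfrak{N}_1^s)=\mathfrak{N}_2^s$, hence $g_0(\mathfrak{N}_1)=\mathfrak{N}_2$, so $g_0$ induces a ring isomorphism $\mathcal{O}_K/\mathfrak{N}_1 \xrightarrow{\sim}\mathcal{O}_K/\mathfrak{N}_2$; conjugating by the identifications with $\mathbf{F}_N$ gives a ring automorphism of $\Z/N\Z$, necessarily the identity. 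Hence $(x \bmod \mathfrak{N}_1)=(g_0(x)\bmod \mathfrak{N}_2)$ in $\mathbf{F}_N$ for every $x\in\mathcal{O}_K$, and since $g_0|_K$ has order $2$ this gives $\log_1(x)=\log_2(g_0(x))$ whenever both sides are defined.

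Granting this, the two identities follow quickly. For the first, $u(K)$ is a unit, so $u(K)\cdot g_0(u(K))=\Norm_{K/\Q}(u(K))=\pm 1$; reducing modulo $\mathfrak{N}_1$ and applying the homomorphism $\log$ (with $\log(-1)=0$ since $p$ is odd) gives $\log_1(u(K))+\log_1(g_0(u(K)))=0$, and by the equivariance $\log_1(g_0(u(K)))=\log_2(g_0(g_0(u(K))))=\log_2(u(K))$, whence $\log_1(u(K))=-\log_2(u(K))$. For the second, $g_0(\pi_1)=\pi_2$ gives $g_0(\pi_2)=\pi_1$, and as $\pi_2$ is prime to $\mathfrak{N}_1$ and $\pi_1$ to $\mathfrak{N}_2$ the equivariance yields $\log_1(\pi_2)=\log_2(g_0(\pi_2))=\log_2(\pi_1)$. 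The only genuinely delicate point is the identification of $\mathcal{O}_K/\mathfrak{N}_i$ with $\mathbf{F}_N$ together with the check that the automorphism induced by $g_0$ is the identity; once that is in hand, everything else is just multiplicativity of norms and of $\log$.
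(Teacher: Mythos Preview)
Your proof is correct and follows essentially the same approach as the paper: both establish the equivariance $\log_1(x)=\log_2(g_0(x))$ and then apply $g_0(u(K))=\pm u(K)^{-1}$ and $g_0(\pi_1)=\pi_2$. You simply spell out in more detail why the equivariance holds (via the canonical identification $\mathcal{O}_K/\mathfrak{N}_i\simeq\mathbf{F}_N$ and the triviality of $\Aut(\mathbf{F}_N)$), whereas the paper states this step without justification.
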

\begin{proof}
Note that if $x \in \mathcal{O}_K$ is coprime with $\mathfrak{N}_1$, then $\log_1(x) = \log_2(g_0(x))$. The first equality then follows from $g_0(u(K)) = \pm u(K)^{-1}$ and the second equality from $g_0(\pi_1) = \pi_2$. 
\end{proof}

The following result deals with case (A).

\begin{prop}\label{prop_case_A}
Assume $p \nmid h(K)$. Then $\rk_{\mathbf{F}_p} \Sel_I(\tilde{J}^{(p)}/K) > 1$ if and only if $\log_1(u(K))=0$.
\end{prop}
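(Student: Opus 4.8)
The plan is to compute $\Sel_I(\tilde{J}^{(p)}/K)$ explicitly inside $H^1(\mathcal{O}_K[1/N],\mu_p)\oplus H^1(G_{K,N},\Z/p\Z)$, using Proposition \ref{thm_uniformization} and the explicit local Kummer image (\ref{kummer image rank 1}) at $\mathfrak{N}_1,\mathfrak{N}_2$. Recall we are in the case $g_p=1$, so $\eta=p$. First I would pin down the ambient groups. Since $p\nmid h(K)$, the group $\Pic(\mathcal{O}_K[1/N])$ is a quotient of $\Pic(\mathcal{O}_K)$ and hence has trivial $p$-part, so by (\ref{eq_Kummer_fppf}) we get $H^1(\mathcal{O}_K[1/N],\mu_p)=\mathcal{O}_K[1/N]^{\times}/(\mathcal{O}_K[1/N]^{\times})^p$; since $p$ is odd and $p\nmid s$ (because $s\mid h(K)$), the classes of $u(K),\pi_1,\pi_2$ form an $\mathbf{F}_p$-basis of this $3$-dimensional space, the subspace $\mathcal{O}_K^{\times}/(\mathcal{O}_K^{\times})^p$ being the line spanned by $u(K)$ and the valuations at $\mathfrak{N}_1,\mathfrak{N}_2$ detecting $\pi_1,\pi_2$. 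On the other side, $H^1(G_{K,N},\Z/p\Z)$ contains the nonzero class of $\log$ restricted to $G_K$ (nonzero because $N$ splits in $K$, so $\log$ stays surjective), and I would compute by class field theory — identifying $H^1(G_{K,N},\Z/p\Z)$ with $\Hom$ of the $p$-part of the ray class group of conductor $\mathfrak{N}_1\mathfrak{N}_2$, which since $p\nmid h(K)$ equals $\bigl(\bigoplus_{i}\mathbf{F}_N^{\times}/(\mathbf{F}_N^{\times})^p\bigr)/\Delta(\mathcal{O}_K^{\times})$ — that $\dim_{\mathbf{F}_p}H^1(G_{K,N},\Z/p\Z)$ is $1$ if $\log_1(u(K))\neq 0$ and $2$ if $\log_1(u(K))=0$, using $\log_1(u(K))=-\log_2(u(K))$ (the preceding lemma).

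Next I would simplify the local condition. Putting $\omega:=a_i\mu_i\phi+(k\lambda_i+a_i\mu_i)\log$ and eliminating $\lambda_i$, the pair appearing in (\ref{kummer image rank 1}) becomes exactly $(\omega,-k^{-1}\omega)$, with $\omega$ running over all of $H^1(G_{K_{\mathfrak{N}_i}},\Z/p\Z)=\langle\phi,\log\rangle$ (here $a_i,k\in(\Z/p\Z)^{\times}$). Thus the Selmer condition at $\mathfrak{N}_i$ is simply that the $C_p$-component equals $-k^{-1}$ times the $\Sigma_p$-component (under the identification (\ref{eq_local_N_mu_iso})); in particular the projection $(\alpha,\psi)\mapsto\alpha$ is injective on $\Sel_I$ (a class with trivial $\alpha$-part that is locally in the image would be trivial at $\mathfrak{N}_1,\mathfrak{N}_2$, hence unramified everywhere, hence trivial as $p\nmid h(K)$). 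Concretely, writing $\alpha=N^{v}\cdot(\text{unit})$ locally at $\mathfrak{N}_i$, the Kummer class of $\alpha$ via (\ref{eq_local_N_mu_iso}) is $-v\log$ plus a nonzero scalar multiple of $\log_i(\text{unit})\cdot\phi$ (Lemma \ref{lemma_log_N_comparison} for the uniformizer $N$; a unit gives an unramified, hence $\phi$-proportional, class, nonzero exactly when its $\log_i$ is nonzero). I would also record that the cuspidal class $\kappa_C$ of (\ref{eq_kappa_C}) has $C_p$-component $-\log$ and $\Sigma_p$-component a nonzero multiple of the class of $N$ (via Lemma \ref{lemma_description_b_c_g_p=1}), and is a nonzero element of $\Sel_I$ by Lemma \ref{lem_passage_I_p}.

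Now for the two directions. If $\log_1(u(K))=0$, then also $\log_2(u(K))=0$, so $u(K)$ is a $p$-th power in $K_{\mathfrak{N}_i}^{\times}$ for $i=1,2$, its Kummer class is locally trivial at $\mathfrak{N}_1,\mathfrak{N}_2$, and $(u(K),0)$ satisfies the Selmer condition (both local components vanish). Since $(u(K),0)$ and $\kappa_C$ are $\mathbf{F}_p$-independent (one has trivial $H^1(G_{K,N},\Z/p\Z)$-component and a nonzero, non-$N$, first component), this gives $\rk_{\mathbf{F}_p}\Sel_I>1$. Conversely, suppose $\log_1(u(K))\neq 0$; then $H^1(G_{K,N},\Z/p\Z)=\langle\log\rangle$, so any $x\in\Sel_I$ is $(\alpha,t\log)$, and the local condition at $\mathfrak{N}_i$ forces the Kummer class of $\alpha$ at $\mathfrak{N}_i$ to be $-kt\cdot\log$ with no $\phi$-component. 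Comparing $\phi$-components shows the unit part of $\alpha$ is a $p$-th power modulo $\mathfrak{N}_i$ for $i=1,2$, and comparing $\log$-components gives $v_{\mathfrak{N}_1}(\alpha)=v_{\mathfrak{N}_2}(\alpha)$; writing $\alpha\equiv u(K)^x\pi_1^{y}\pi_2^{y}$ modulo $p$-th powers and using $\pi_1\pi_2\equiv N^{s}u(K)^{m}$ modulo $p$-th powers (as $\pi_1\pi_2$ generates $(N^s)$ and $\mathcal{O}_K^{\times}=\{\pm 1\}\times u(K)^{\Z}$), we reduce to $\alpha\equiv u(K)^{x'}N^{sy}$, and then $0=\log_1(u(K)^{x'})=x'\log_1(u(K))$ forces $x'=0$. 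Hence the first component of $x$ lies in the line spanned by the class of $N$, so $\Sel_I$ is $1$-dimensional and $\rk_{\mathbf{F}_p}\Sel_I=1$. Combining the two cases gives $\rk_{\mathbf{F}_p}\Sel_I>1\iff\log_1(u(K))=0$.

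The main obstacle I anticipate is bookkeeping rather than a new idea: keeping the two identifications of $\mu_p$ with $\Z/p\Z$ consistent — the global one from (\ref{eq_Shimura_subgp_2})/(\ref{eq_def_zeta_p}) and the local one (\ref{eq_local_N_mu_iso}), related by the constants $a_i$ — when localizing $\kappa_C$ and matching signs with (\ref{kummer image rank 1}), and carefully justifying the class field theory input that $\dim_{\mathbf{F}_p}H^1(G_{K,N},\Z/p\Z)\in\{1,2\}$ (the ray class group computation, and the harmless vanishing of the archimedean contribution since $p$ is odd).
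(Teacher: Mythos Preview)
Your approach is correct and follows the same overall framework as the paper's proof: identify the ambient groups via (\ref{eq_Kummer_fppf}) and class field theory (the paper records this as the isomorphism (\ref{eq_iso_CFT_K_N_2})), then impose the explicit local Kummer conditions (\ref{kummer image rank 1}) at $\mathfrak{N}_1,\mathfrak{N}_2$.

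The genuine difference is your observation that the pair in (\ref{kummer image rank 1}) is exactly $(\omega,-k^{-1}\omega)$ with $\omega$ ranging over all of $H^1(G_{K_{\mathfrak{N}_i}},\Z/p\Z)$: multiplying the second coordinate by $-k$ indeed recovers the first, and the map $(\lambda_i,\mu_i)\mapsto(a_i\mu_i,\,k\lambda_i+a_i\mu_i)$ is a bijection on $(\Z/p\Z)^2$. This ``graph'' description is a real simplification that the paper does not make explicit. It lets you (i) see immediately that the projection $(f_\Sigma,f_C)\mapsto f_\Sigma$ is injective on $\Sel_I$ when $p\nmid h(K)$, and (ii) handle the case $\log_1(u(K))=0$ by the single explicit class $(u(K),0)$, and the case $\log_1(u(K))\neq 0$ by a short valuation/$\phi$-component argument forcing $f_\Sigma\in\langle N\rangle$. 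The paper instead writes out all eight linear equations (\ref{local_kummer_f_C}) and (\ref{local_kummer_f_S}) and solves them in three subcases (splitting further on whether $\log_1(\pi_2)=0$), which is more laborious but yields the exact values $\rk_{\mathbf{F}_p}\Sel_I\in\{1,2,3\}$ in each subcase rather than just the dichotomy $>1$ versus $=1$. Your anticipated bookkeeping with the constants $a_i$ and the ray-class computation is exactly what the paper handles via Lemma \ref{lemma_log_N_comparison} and Lemma \ref{lemma_f_C_v_1v_2}; your version sidesteps most of it thanks to the graph observation. One small point: when you write $\pi_1\pi_2\equiv N^s u(K)^m$, with the paper's choice $\pi_2=g_0(\pi_1)$ one has $\pi_1\pi_2=\Norm_{K/\Q}(\pi_1)=\pm N^s$, so in fact $m=0$ and your reduction to $\alpha\equiv u(K)^{x'}N^{sy}$ goes through directly.
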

\begin{proof}
Recall that we identify $\Sel_I(\tilde{J}^{(p)}/K)$ with a subgroup of 
$$ H^1(\mathcal{O}_K[1/N], \mu_p) \bigoplus H^1(G_{K,N}, \Z/p\Z) \text{ .}$$
An element of $\Sel_I(\tilde{J}^{(p)}/K)$ can thus be written as $(f_{\Sigma}, f_C)$ for some $f_{\Sigma} \in H^1(\mathcal{O}_K[1/N], \mu_p) $ and $f_C \in H^1(G_{K,N}, \Z/p\Z)$.

Let $K(N)$ be the maximal exponent $p$ abelian extension of $K$ unramified outside $\mathfrak{N}_1$ and $\mathfrak{N}_2$. Since $p \nmid h(K)$ and $p\neq N$, we easily see using the idelic Artin map that
\begin{equation}\label{eq_iso_CFT_K_N}
\Gal(K(N)/K) \simeq \left((\mathcal{O}_{K_{\mathfrak{N}_1}}^{\times}/U_1 \times \mathcal{O}_{K_{\mathfrak{N}_2}}^{\times}/U_2) / u(K)^{\Z}\right) \otimes \Z/p\Z\text{ ,}
\end{equation}
where $U_i \subset  \mathcal{O}_{K_{\mathfrak{N}_i}}^{\times}$ is the subgroup of principal units and $u(K)^{\Z}$ means the subgroup of $(\mathcal{O}_K/\mathfrak{N}_1)^{\times} \times (\mathcal{O}_K/\mathfrak{N}_2)^{\times}$ generated by the diagonal image of $u(K)$.

Using $\log_1$ and $\log_2$, we get a canonical isomorphism
\begin{equation}\label{eq_iso_CFT_K_N_2}
\Gal(K(N)/K) \otimes \Z/p\Z \simeq \left(\Z/p\Z \times \Z/p\Z \right)/ \Z\cdot (\log_1(u(K)), -\log_1(u(K))) \text{ .}
\end{equation}
Our homomorphism $f_C \in H^1(G_{K,N}, \Z/p\Z) = \Hom(\Gal(K(N)/K), \Z/p\Z)$ can then be considered as a homomorphism on $\Z/p\Z \times \Z/p\Z$ vanishing on $(\log_1(u(K)), -\log_1(u(K)))$, and is thus determined by its values $v_1 := f_C(1,0)$ and $v_2 := f_C(0,1)$.

The following is a simple exercise in global class field theory. Since $p\nmid h(K)$, we have $p\nmid s$ (where we recall that $s$ is the order of $\mathfrak{N}_i$ in $\Pic(\mathcal{O}_K)$).
\begin{lem}\label{lemma_f_C_v_1v_2}
Let $f_C \in H^1(G_{K,N}, \Z/p\Z)$ and let $v_1,v_2 \in \Z/p\Z$ be defined as above. Locally at $\mathfrak{N}_1$ we have
$$f_C = v_1\cdot \log + s^{-1}\log_1(\pi_2)(v_1-v_2)\cdot \phi$$
and locally at $\mathfrak{N}_2$ we have
$$f_C = v_2\cdot \log + s^{-1} \log_1(\pi_2) (v_2-v_1)\cdot \phi \text{ .}$$
\end{lem}
\begin{proof}[Sketch of proof]
Let us consider $f_C$ locally at $\mathfrak{N}_1$ (the description at $\mathfrak{N}_2$ being obtained by symmetry).
Let $L$ be the maximal exponent $p$ abelian extension of $K_{\mathfrak{N}_1}$. Local class field theory gives a canonical isomorphism
\begin{equation}\label{eq_LCFT_L_1}
(K_{\mathfrak{N}_1}^{\times}/U_1 )\otimes \Z/p\Z \simeq \Gal(L/K_{\mathfrak{N}_1}) \text{ .}
\end{equation}
Note that $L$ is the compositum of $L_1$ and $L_2$, where $L_1$ is the degree $p$ subextension of $K_{\mathfrak{N}_1}(\zeta_N)$ and $L_2$ is the unramified $\Z/p\Z$-extension of $K_{\mathfrak{N}_1}$. We thus have 
\begin{equation}\label{eq_LCFT_L_2}
\Gal(L/K_{\mathfrak{N}_1}) \simeq \Gal(L_1/K_{\mathfrak{N}_1})\times \Gal(L_2/K_{\mathfrak{N}_1}) \simeq \Z/p\Z \times \Z/p\Z \text{ .}
\end{equation}
The isomorphism 
$$(K_{\mathfrak{N}_1}^{\times}/U_1 )\otimes \Z/p\Z \simeq  \Z/p\Z \times \Z/p\Z $$
resulting from (\ref{eq_LCFT_L_1}) and (\ref{eq_LCFT_L_2}) is described as follows. It sends $N \in K_{\mathfrak{N}_1}^{\times}$ to $(0,1)$. Its restriction to $\mathcal{O}_{K_{\mathfrak{N}_1}}^{\times}$ is given by $(\log_1,0)$.

Our fixed embedding 
$$\Gal(\overline{K}_{\mathfrak{N}_1}/K_{\mathfrak{N}_1}) \hookrightarrow G_K$$
yields a canonical map
\begin{equation}\label{eq_local_global_CFT} 
\Gal(L/K_{\mathfrak{N}_1})  \rightarrow \Gal(K(N)/K) \text{ .}
\end{equation}

Let us now describe the map
$$\varphi : \Z/p\Z \times \Z/p\Z \rightarrow  \left(\Z/p\Z \times \Z/p\Z \right)/ \Z\cdot (\log_1(u(K)), -\log_1(u(K))) $$

obtained by combining (\ref{eq_iso_CFT_K_N_2}),  (\ref{eq_LCFT_L_2}) and (\ref{eq_local_global_CFT}). We have $\varphi(1,0)=(1,0)$ (this follows from the compatibility between the local and global Artin maps). Let us now determine $\varphi(0,1)$. It amounts to writing the id\`ele element $x$ of $\mathbb{A}_K^{\times}$ whose components are $1$ everywhere, except $N$ at $\mathfrak{N}_1$, as $x=y\cdot z$ where $y \in K^{\times}$ is a $N$-unit (diagonal element) and $z$ has component $1$ everywhere except possibly at $\mathfrak{N}_1$ and $\mathfrak{N}_2$ where the components of $z$ are units.

Write $N^s = \pm \pi_1\pi_2$. The element $x^s$ is of the form $\pi_1 \cdot y$ where the components of $y$ at places not dividing $N$ are units, the component of $y$ at $\mathfrak{N}_1$ is $\pm \pi_2$ and the component at $\mathfrak{N}_2$ is $\pi_1^{-1}$. Therefore, we have
$$\varphi(0,1) = s^{-1}\cdot (\log_1(\pi_2),-\log_2(\pi_1)) = s^{-1}(\log_1(\pi_2),-\log_1(\pi_2)) \text{ .}$$

By construction, we have locally at $\mathfrak{N}_1$:
\begin{align*}
f_C &= f_C(\varphi(1,0))\cdot \log + f_C(\varphi(0,1))\cdot \phi 
\\& = f_C(1,0)\cdot \log + s^{-1}\log_1(\pi_2)\cdot f_C(1,-1)\cdot \phi 
\\& = v_1\cdot \log + s^{-1} \log_1(\pi_2) (v_1-v_2)\cdot \phi \text{ .}
\end{align*}
This concludes the proof of the lemma.
\end{proof}

Recall that by (\ref{kummer image rank 1}), for each $i \in \{1,2\}$, there exists $\lambda_i, \mu_i \in \Z/p\Z$ such that locally at $\mathfrak{N}_i$
$$f_C = -a_i\mu_ik^{-1} \cdot \phi - (\lambda_i+k^{-1}\mu_i a_i)\cdot \log \text{ .}$$
Thus, the local Kummer condition at $\mathfrak{N}_1$ and $\mathfrak{N}_2$ for $f_C$ can be written as:

\begin{equation}\label{local_kummer_f_C}
	\begin{array}{lcl}
		-a_1\mu_1k^{-1} &=& s^{-1} \log_1(\pi_2) (v_1-v_2),\\
		 - (\lambda_1+k^{-1}\mu_1 a_1) &=& v_1 ,\\
		-a_2\mu_2k^{-1} &=& s^{-1} \log_1(\pi_2) (v_2-v_1),\\
		-(\lambda_2 +k^{-1}\mu_2 a_2)  &=&v_2.
	\end{array}
\end{equation}

Since $p\nmid h(K)$, we have 
$$H^1(\mathcal{O}_K[1/N], \mu_p) \simeq \mathcal{O}_K[1/N]^{\times} \otimes \Z/p\Z \text{ .}$$
An element of $\mathcal{O}_K[1/N]^{\times} \otimes \Z/p\Z$ is of the form $\pi_1^{\alpha_1}\pi_2^{\alpha_2}u(K)^{\beta}$ for some $\alpha_1, \alpha_2, \beta \in \Z/p\Z$.
Thus, our cohomology class $f_{\Sigma}$ is of the form 
$$\pi_1^{\alpha_1}\pi_2^{\alpha_2}u(K)^{\beta} =N^{s\alpha_1} \pi_2^{\alpha_2-\alpha_1}u(K)^{\beta}.$$
 Locally at $\mathfrak{N}_1$, we get
$$f_{\Sigma} = -s\alpha_1\cdot \log+ ((\alpha_2-\alpha_1)\log_1(\pi_2)+\beta\log_1(u(K)))\cdot \phi \text{ .}$$
Similarly, locally at $\mathfrak{N}_2$, we get
\begin{align*}
f_{\Sigma} &= -s\alpha_2\cdot \log+ ((\alpha_1-\alpha_2)\log_2(\pi_1)+\beta\log_2(u(K)))\cdot \phi 
\\& = -s\alpha_2\cdot \log+ ((\alpha_1-\alpha_2)\log_1(\pi_2)-\beta\log_1(u(K)))\cdot \phi \text{ .}
\end{align*}
By (\ref{kummer image rank 1}), the Kummer conditions at $\mathfrak{N}_1$ and $\mathfrak{N}_2$ for $f_{\Sigma}$ can therefore be written as

\begin{equation}\label{local_kummer_f_S}
	\begin{array}{lcl}
		a_1\mu_1  &=& (\alpha_2-\alpha_1)\log_1(\pi_2)+\beta\log_1(u(K)),\\
		 k\lambda_1+a_1\mu_1 &=& -s\alpha_1 ,\\
		a_2\mu_2 &=& (\alpha_1-\alpha_2)\log_1(\pi_2)-\beta\log_1(u(K)),\\
		 k\lambda_2+a_2\mu_2  &=&-s\alpha_2.
	\end{array}
\end{equation}

We now solve equations (\ref{local_kummer_f_C}) and (\ref{local_kummer_f_S}). Note that the variables are $v_1$, $v_2$, $\alpha_1$, $\alpha_2$ and $\beta$. These variables determine completely $f_{C}$ and $f_{\Sigma}$. We distinguish three cases.

Case 1: $\log_1(u(K)) = \log_1(\pi_2)= 0$. By (\ref{local_kummer_f_S}), we get $\mu_1=\mu_2=0$. Our equations reduce to $v_1=-\lambda_1$, $v_2=-\lambda_2$, $\alpha_1 = -s^{-1}k\lambda_1$ and $\alpha_2 = -s^{-1}k\lambda_2$. There is no condition on $\beta$, which can be chosen to be arbitrary. Thus, one can choose $(v_1, v_2, \beta)$ arbitrarily in $(\Z/p\Z)^3$. This shows that in this case $\rk_{\mathbf{F}_p} \Sel_I(\tilde{J}^{(p)}/K) = 3 >1$, as wanted.

Case 2: $\log_1(u(K)) = 0$ and $\log_1(\pi_2) \neq 0$.  By the second and fourth equations of (\ref{local_kummer_f_C}) and (\ref{local_kummer_f_S}), we get $\alpha_1 = ks^{-1} v_1$ and $\alpha_2 = ks^{-1} v_2$. By the first and third equations of (\ref{local_kummer_f_C}), we get $\mu_1 = ka_1^{-1}s^{-1}\log_1(\pi_2)(v_2-v_1)$ and $\mu_2 = -ka_2^{-1}s^{-1}\log_1(\pi_2)(v_2-v_1)$. By the second and fourth equations of (\ref{local_kummer_f_C}), we get $\lambda_1 = -v_1+s^{-1}\log_1(\pi_2)(v_1-v_2)$ and $\lambda_2 = -v_2+s^{-1}\log_1(\pi_2)(v_2-v_1)$. Thus, $\alpha_1$, $\alpha_2$, $\lambda_1$, $\lambda_2$, $\mu_1$ and $\mu_2$ are determined by the choice of $(v_1, v_2, \beta) \in (\Z/p\Z)^3$. Conversely, it is easy to check that the expressions we just gave are solutions of (\ref{local_kummer_f_C}) and (\ref{local_kummer_f_S}). We get again $\rk_{\mathbf{F}_p} \Sel_I(\tilde{J}^{(p)}/K) = 3 >1$, as wanted.

Case 3: $\log_1(u(K)) \neq 0$. In this case, note from (\ref{eq_iso_CFT_K_N_2}) that $f_C$ factors through $\Gal(K(\zeta_N^{(p)})/K)$, where $K(\zeta_N^{(p)})$ is the unique subextension of $K$ of degree $p$ in $K(\zeta_N)$. In other words, we must have $v_1=v_2$. By the first and third equations of (\ref{local_kummer_f_C}), we get $\mu_1=\mu_2=0$. By the second and fourth equations of (\ref{local_kummer_f_C}), we get $\lambda_1=\lambda_2 = -v_1 = -v_2$. By the second and fourth equations of (\ref{local_kummer_f_S}), we get $\alpha_1=\alpha_2=-s^{-1}k\lambda_1 = s^{-1}kv_1$. Finally, by the first and third equations of (\ref{local_kummer_f_S}), we get $\beta = 0$. Thus, we only have one degree of freedom, namely one can choose \eg $v_1$. This proves $\rk_{\mathbf{F}_p} \Sel_I(\tilde{J}^{(p)}/K) = 1$, as wanted. 

This concludes the proof of Proposition \ref{prop_case_A}.

\end{proof}

The following result deals with case (B).

\begin{prop}\label{prop_case_B}
Assume $p \mid h(K)$ and $\Pic(\mathcal{O}_K[1/N]) \otimes \Z_p=0$. Then $\rk_{\mathbf{F}_p} \Sel_I(\tilde{J}^{(p)}/K) > 1$.
\end{prop}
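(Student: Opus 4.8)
The plan is to produce a second $\mathbf{F}_p$-line inside $\Sel_I(\tilde{J}^{(p)}/K)$, independent of the line spanned by the cuspidal class $\overline{C}_p^{(1)}$ of Lemma~\ref{lem_passage_I_p}. Recall that, via $\tilde{J}^{(p)}[I]\otimes\Z_p=\Sigma_p\oplus C_p\simeq \mu_p\oplus\Z/p\Z$, the group $\Sel_I(\tilde{J}^{(p)}/K)$ sits inside $H^1(\mathcal{O}_K[1/N],\mu_p)\oplus H^1(G_{K,N},\Z/p\Z)$, cut out by the local Kummer conditions at $\mathfrak{N}_1$ and $\mathfrak{N}_2$. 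The key observation is that each local Kummer image at $\mathfrak{N}_i$ is a \emph{subgroup}, hence contains $0$; therefore any class of the form $(f_\Sigma,0)$ with $f_\Sigma\in H^1(\mathcal{O}_K[1/N],\mu_p)$ whose restrictions to $\mathfrak{N}_1$ and to $\mathfrak{N}_2$ both vanish automatically lies in $\Sel_I(\tilde{J}^{(p)}/K)$: the fppf/\'etale conditions above $p$ are built into the definition of $H^1(\mathcal{O}_K[1/N],\mu_p)$, and the zero $C_p$-component is trivially admissible everywhere. Moreover, by the description $(\ref{eq_description_rho})$ of $\rho$, the cuspidal Selmer class is represented by the cocycle $g\mapsto g(e_2)-e_2=\eta b(g)\cdot e_1-\eta\log(g)\cdot e_2$, whose $C_p$-component is the homomorphism $-\log$; since $\log$ is ramified at $N$, this component is non-zero at $\mathfrak{N}_1$. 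Consequently, as soon as $f_\Sigma\neq 0$, the classes $(f_\Sigma,0)$ and $\overline{C}_p^{(1)}$ are $\mathbf{F}_p$-independent and $\rk_{\mathbf{F}_p}\Sel_I(\tilde{J}^{(p)}/K)\geq 2$.

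It remains to produce a non-zero such $f_\Sigma$, and this is where both hypotheses enter. First I would observe that $p\mid h(K)$ together with $\Pic(\mathcal{O}_K[1/N])\otimes\Z_p=0$ says exactly that $\Pic(\mathcal{O}_K)\otimes\Z_p$ is non-zero and cyclic, generated by $[\mathfrak{N}_1]$ (equivalently $[\mathfrak{N}_2]$, since $[\mathfrak{N}_1][\mathfrak{N}_2]=[(N)]$ is trivial), so that $p\mid s$, where $s$ is the order of $\mathfrak{N}_i$ in $\Pic(\mathcal{O}_K)$. Next, $\Pic(\mathcal{O}_K[1/N])[p]=0$ makes $(\ref{eq_Kummer_fppf})$ collapse to an isomorphism $H^1(\mathcal{O}_K[1/N],\mu_p)\simeq\mathcal{O}_K[1/N]^\times\otimes\Z/p\Z$, an $\mathbf{F}_p$-space of dimension $3$ (Dirichlet's $S$-unit theorem, $S=\{\infty_1,\infty_2,\mathfrak{N}_1,\mathfrak{N}_2\}$) with basis the images of $u(K)$, $N$ and $\pi_1$, where $(\pi_1)=\mathfrak{N}_1^s$. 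I would then consider the localisation map $\mathrm{res}\colon H^1(\mathcal{O}_K[1/N],\mu_p)\to H^1(K_{\mathfrak{N}_1},\mu_p)\oplus H^1(K_{\mathfrak{N}_2},\mu_p)$; by local Kummer theory each summand is identified with $\Z/p\Z\oplus\Z/p\Z$ via (valuation mod $p$, $\log$ of the unit part), using $p\,\|\,N-1$. Because $p\mid s$, the valuation of $\pi_1$ at $\mathfrak{N}_1$ is $\equiv 0$ and $N^{\pm s}$ is a $p$th power, so $\pi_1$ is, up to $p$th powers, the unit $\pm\pi_2^{-1}$ at $\mathfrak{N}_1$; combining this with the symmetry $\log_1(u(K))=-\log_2(u(K))$, $\log_1(\pi_2)=\log_2(\pi_1)$, one finds that in coordinates $(\mathrm{val}_1,\log_1,\mathrm{val}_2,\log_2)$ the images of $u(K)$ and of $\pi_1$ both lie on the line spanned by $(0,1,0,-1)$, while $N\mapsto(1,0,1,0)$. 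Hence $\mathrm{res}$ has image of dimension $\leq 2$, so $\ker(\mathrm{res})$ has dimension $\geq 1$, and any non-zero element of it is the desired $f_\Sigma$.

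Combining the two steps gives $\rk_{\mathbf{F}_p}\Sel_I(\tilde{J}^{(p)}/K)\geq 2>1$, which by Lemma~\ref{lem_passage_I_p} is the equivalence of $(\ref{main_thm_bis_2})$ and $(\ref{main_thm_bis_3})$ in this case. The one step requiring genuine care is the evaluation of $\mathrm{res}$ — i.e. checking that the three basis classes restrict to a $2$-dimensional subspace of $(\Z/p\Z)^4$ — and this is precisely where the hypothesis $\Pic(\mathcal{O}_K[1/N])\otimes\Z_p=0$, in the form $p\mid s$, is used essentially: it is what renders the Kummer classes of $\pi_1$ and $\pi_2$ unramified at $\mathfrak{N}_1$ and $\mathfrak{N}_2$. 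Every other ingredient (the Galois structure of $\tilde{J}^{(p)}[I]$ from \S\ref{subsec_galois}, the local model $\tilde{J}^{(p)}(K_{\mathfrak{N}_i})/I\simeq(\Z/p\Z)^2$ of Proposition~\ref{thm_uniformization}, the non-triviality and shape of the cuspidal class from Lemma~\ref{lem_passage_I_p}) is already available; and since the argument only uses that the local Kummer image at $\mathfrak{N}_i$ is a subgroup, it is uniform in $g_p$, requiring neither the explicit formula $(\ref{kummer image rank 1})$ nor a separate treatment of $g_p=1$ and $g_p>1$ as in Proposition~\ref{prop_case_A}.
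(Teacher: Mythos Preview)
Your argument is correct and follows the same overall strategy as the paper: produce a Selmer class of the form $(f_\Sigma,0)$ with $f_\Sigma$ locally trivial at $\mathfrak{N}_1,\mathfrak{N}_2$, and then use that the cuspidal class has non-zero $C_p$-component $-\log$ to conclude independence. The paper arrives at the local-triviality requirement by inserting $f_C=0$ into the explicit Kummer-image formula~(\ref{kummer image rank 1}), deducing $\lambda_i=\mu_i=0$, and then solving the resulting system $\beta=0$, $\alpha\log_1(\pi_2)=\gamma\log_1(u(K))$ by hand. You shortcut this in two ways: you observe directly that $0$ lies in any local Kummer image because it is a subgroup, so no explicit description of that image is needed; and you replace the linear-algebra solve by the dimension count showing that under $p\mid s$ the restriction $\mathcal{O}_K[1/N]^{\times}\otimes\Z/p\Z\to(\Z/p\Z)^4$ has image spanned by $(1,0,1,0)$ and a multiple of $(0,1,0,-1)$. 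This buys uniformity in $g_p$ (the paper proves Propositions~\ref{prop_case_B} and~\ref{prop_case_B>1} separately) and in fact makes Proposition~\ref{thm_uniformization} unnecessary for this step, since you never use the \emph{shape} of the local Kummer image, only that it is a subgroup.
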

\begin{proof}
The assumptions $p \mid h(K)$ and $\Pic(\mathcal{O}_K[1/N]) \otimes \Z_p=0$ mean that the $p$-part of $\Pic(\mathcal{O}_K)$ is generated by the prime ideals $\mathfrak{N}_1$ and $\mathfrak{N}_2$ above $N$. In particular, $p \mid s$ (where we recall that $s$ is the order of $\mathfrak{N}_i$ in $\Pic(\mathcal{O}_K)$). 

Recall that an element of $\Sel_I(\tilde{J}^{(p)}/K)$ is given by a pair $(f_{\Sigma}, f_C)$ in $$H^1(\mathcal{O}_K[1/N], \mu_p) \bigoplus H^1(G_{K, N}, \Z/p\Z)$$ satisfying the local Kummer conditions (\ref{kummer image rank 1}) at $\mathfrak{N}_1$ and $\mathfrak{N}_2$.

Let us choose $f_C =0$. By (\ref{kummer image rank 1}), the Kummer conditions at $\mathfrak{N}_1$ and $\mathfrak{N}_2$ are 
\begin{equation}
	\begin{array}{lcl}
		-a_1\mu_1k^{-1} &=& 0, \\
		 \lambda_1+k^{-1}\mu_1 a_1 &=& 0 ,\\
		-a_2\mu_2k^{-1} &=& 0,\\
		\lambda_2 +k^{-1}\mu_2 a_2  &=&0.
	\end{array}
\end{equation}

This is equivalent to
\begin{equation}\label{local_kummer_f_C_caseB}
\lambda_1 = \lambda_2 = \mu_1 = \mu_2 = 0 \text{ .}
\end{equation}

Let us show that one can find $f_{\Sigma} \in H^1(\mathcal{O}_K[1/N], \mu_p)$ such that $(f_{\Sigma}, f_C)$ belongs to $ \Sel_I(\tilde{J}^{(p)}/K) $. Recall that by (\ref{eq_Kummer_fppf}) we have 
$$\mathcal{O}_K[1/N]^{\times} \otimes \Z/p\Z \simeq H^1(\mathcal{O}_K[1/N], \mu_p) \text{ .}$$
Since $p \mid s$, we see that an element of $\mathcal{O}_K[1/N]^{\times} \otimes \Z/p\Z$ is of the form $\pi_1^{\alpha} N^{\beta} u(K)^{\gamma}$ for some $\alpha, \beta, \gamma \in \Z/p\Z$. Let us thus write $f_{\Sigma} = \pi_1^{\alpha} N^{\beta} u(K)^{\gamma}$. 

The restriction of $f_{\Sigma}$ at $G_{K_{\mathfrak{N}_2}}$ is
$$f_{\Sigma} = -\beta \log + (\alpha \log_2(\pi_1) + \gamma \log_2(u(K))) \phi =  -\beta \log + (\alpha \log_1(\pi_2) - \gamma \log_1(u(K))) \phi  \text{ .}$$

At $\mathfrak{N}_1$, write $f_{\Sigma} = \pi_2^{-\alpha} N^{\beta+s\alpha} u(K)^{\gamma} = \pi_2^{-\alpha} N^{\beta} u(K)^{\gamma}$ (since $p \mid s$).
The restriction of $f$ at $G_{K_{\mathfrak{N}_1}}$ is therefore
$$f_{\Sigma} = -\beta \log + (-\alpha \log_1(\pi_2) + \gamma \log_1(u(K))) \phi \text{ .}$$

By (\ref{kummer image rank 1}) the local Kummer conditions at $\mathfrak{N}_1$ and $\mathfrak{N}_2$ are

\begin{equation}\label{local_kummer_f_S_caseB}
	\begin{array}{lcl}
		a_1\mu_1 &=& -\alpha \log_1(\pi_2) + \gamma \log_1(u(K)), \\
		 k\lambda_1+\mu_1 a_1 &=& -\beta ,\\
		a_2\mu_2 &=&  \alpha \log_1(\pi_2) - \gamma \log_1(u(K)),\\
		k\lambda_2 + \mu_2 a_2  &=& -\beta.
	\end{array}
\end{equation}
By (\ref{local_kummer_f_C_caseB}), equations (\ref{local_kummer_f_S_caseB}) are equivalent to 
\begin{equation}
	\begin{array}{lcl}
		\beta &=& 0, \\
		 \alpha \log_1(\pi_2) - \gamma \log_1(u(K)) &=&0 .
		 
	\end{array}
\end{equation}

There is always a choice of $(\alpha,  \beta, \gamma)$ which satisfies these equations. Thus, we have proved that there exists an element of $ \Sel_I(\tilde{J}^{(p)}/K) $ of the form $(f_{\Sigma}, 0)$. This element is not a multiple of the cuspidal subgroup $\kappa_C \in \Sel_I(\tilde{J}^{(p)}/K) $. Indeed, locally at $\mathfrak{N}_i$, we have seen that $\kappa_c$ is equal to $(b, -\log)$. This is non-zero so cannot be a multiple of $(f_{\Sigma}, 0)$. This proves that $\rk_{\mathbf{F}_p} \Sel_I(\tilde{J}^{(p)}/K) > 1$, as wanted.

\end{proof}

\subsubsection{Case $g_p>1$}
Assume in this paragraph that $g_p>1$. In this case, we have an isomorphism $\mathbb{T}_{\mathfrak{P}}^0/I^2 \simeq \mathbb{F}_p[T]/(T^2)$ ($T$ corresponding to our local generator $\eta \in I$).  The cyclotomic character (modulo $p$) $\chi_p : G_{K_{\mathfrak{N}_i}} \rightarrow (\Z/p\Z)^{\times}$ is the trivial character. This will make the computations a bit simpler than in the case $g_p=1$.

We shall need the following local description of the cocycles $b$ and $c$ involved in (\ref{eq_description_rho}).
\begin{lem}\label{lemma_description_b_c_g_p>1}
There exists $k\in (\Z/p\Z)^{\times}$ such that for any $i \in \{1,2\}$, the restriction to $G_{K_{\mathfrak{N}_i}}$ of $b$ and $c$ satisfy
$$b = ka_i^{-1}\cdot \log$$
and
$$c = -k^{-1}a_i\cdot \log  \text{ .}$$
\end{lem}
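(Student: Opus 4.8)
The plan is to imitate the proof of Lemma~\ref{lemma_description_b_c_g_p=1} almost verbatim, the only change being that the unramified homomorphism $\phi$ occurring there is now identically zero. Indeed, since $g_p>1$ the ring $\mathbb{T}_{\mathfrak{P}}^0$ is a totally ramified DVR of degree $g_p\geq 2$ over $\Z_p$, so $p\in I^2\cdot\mathbb{T}_{\mathfrak{P}}^0$, whence $\mathbb{T}_{\mathfrak{P}}^0/I^2\simeq\mathbf{F}_p[\eta]/(\eta^2)$ and $\Z_p^\times\to(\mathbb{T}_{\mathfrak{P}}^0/I^2)^\times$ factors through $\mathbf{F}_p^\times$. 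As $\chi_p$ is unramified at $N$ with $\chi_p(\Frob_{\mathfrak{N}_i})=N\equiv 1\pmod p$, it becomes trivial on $G_{K_{\mathfrak{N}_i}}$ modulo $I^2$; hence by (\ref{eq_description_rho}) the restriction of $\rho-1$ to $G_{K_{\mathfrak{N}_i}}$ equals $\eta\cdot\begin{pmatrix}\log & b\\ c & -\log\end{pmatrix}$, which is the $g_p=1$ matrix with $\phi$ deleted.

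The cocycle $b$ is handled exactly as in Lemma~\ref{lemma_description_b_c_g_p=1}. Viewing $b\in Z^1(G_{\Q},(\Z/p\Z)(1))$ as a class in $H^1(G_{\Q,Np},\mu_p)$ via the fixed generator $\zeta_p$, finite-flatness of $\rho$ at $p$ (valid since $\tilde{J}^{(p)}$ has good reduction at $p$, so that $\tilde{J}^{(p)}[I^2]\otimes\Z_p$ extends to a finite flat group scheme over $\Z_p$) puts this class in $H^1_{\fppf}(\Z[1/N],\mu_p)=\Z[1/N]^\times\otimes\Z/p\Z$, which is generated by $N$. If $b$ were trivial one could take $e_2$ to be $G_{\Q}$-fixed, so that $\tilde{J}^{(p)}(\Q)[I^2]$ would strictly contain $C_p$, contradicting Proposition~\ref{proof_torsion_I^2}. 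Hence $b$ is the class of $N^k$ for some $k\in(\Z/p\Z)^\times$, the same $k$ for $i=1$ and $i=2$ since $b$ is global, and Lemma~\ref{lemma_log_N_comparison} then gives $b=ka_i^{-1}\log$ on $G_{K_{\mathfrak{N}_i}}$ (after replacing $k$ by $-k$).

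For $c$ I would argue as follows. Since $J_0(N)$, hence $\tilde{J}^{(p)}$, has purely toric reduction at $N$, the $N$-adic uniformization recalled in Section~\ref{section_uniformization} (equivalently, de Shalit's results) shows that the restriction of $\rho$ to $G_{K_{\mathfrak{N}_i}}$ admits a $G_{K_{\mathfrak{N}_i}}$-fixed quotient line; as in Lemma~\ref{lemma_description_b_c_g_p=1} this forces
\[
\det\begin{pmatrix}\log(g) & b(g)\\ c(g) & -\log(g)\end{pmatrix}=0\qquad\text{for every }g\in G_{K_{\mathfrak{N}_i}},
\]
that is $b(g)c(g)=-\log(g)^2$. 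Since $N\equiv 1\pmod p$ the Galois modules $(\Z/p\Z)(\pm1)$ are trivial on $G_{K_{\mathfrak{N}_i}}$, so after the identification (\ref{eq_local_N_mu_iso}) the restrictions of $b$, $c$ and $\log$ to $G_{K_{\mathfrak{N}_i}}$ are homomorphisms to $\Z/p\Z$. Substituting $b=ka_i^{-1}\log$ into $bc=-\log^2$ yields $c(g)=-k^{-1}a_i\log(g)$ whenever $\log(g)\neq0$; as $c+k^{-1}a_i\log$ is a homomorphism vanishing on the complement of $\ker(\log)$, it vanishes identically, giving $c=-k^{-1}a_i\log$.

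The hard part is the structural input in the last step: producing the $G_{K_{\mathfrak{N}_i}}$-fixed quotient line and deducing the vanishing of the determinant. This is exactly where multiplicative reduction at $N$ and the $N$-adic uniformization of Section~\ref{section_uniformization} are genuinely used, just as in the case $g_p=1$; once it is in hand, the remainder is the same bilinear bookkeeping as there, simplified by $\phi=0$. A minor point worth checking is that a single constant $k$ works for both $\mathfrak{N}_1$ and $\mathfrak{N}_2$, which is clear since $k$ is read off from the class of $b$ in $\Z[1/N]^\times\otimes\Z/p\Z$ before localizing.
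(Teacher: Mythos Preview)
Your argument is correct and follows essentially the same route as the paper's own proof: identify the class of $b$ in $H^1_{\fppf}(\Z[1/N],\mu_p)\simeq\Z[1/N]^\times\otimes\Z/p\Z$, rule out triviality via the rational $I^2$-torsion, apply Lemma~\ref{lemma_log_N_comparison}; then use the $G_{K_{\mathfrak{N}_i}}$-fixed quotient line from semi-stable reduction at $N$ to force $\det\begin{pmatrix}\log & b\\ c & -\log\end{pmatrix}=0$ and solve for $c$. Your extra remarks---explaining why $\chi_p$ collapses to $\overline{\chi}_p$ when $g_p>1$, why a single $k$ serves both primes, and why $bc=-\log^2$ together with $b=ka_i^{-1}\log$ pins down $c$ as a homomorphism---are helpful clarifications of steps the paper leaves implicit.
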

\begin{proof}
A similar argument as in the proof of Lemma \ref{lemma_description_b_c_g_p=1} shows that $b$ represents a non-trivial class in $H^1_{\fppf}(\Z[1/N], \mu_p)$, which must correspond to the Kummer class of $N \otimes k \in \Z[1/N]^{\times} \otimes \Z/p\Z$ for some $k \in (\Z/p\Z)^{\times}$. By Lemma \ref{lemma_log_N_comparison}, we have $a_i \cdot b = - k \cdot \log$ locally at $\mathfrak{N}_i$. Up to replacing $k$ with $-k$, we get $b = ka_i^{-1} \cdot \log$, as wanted.

Let us now prove that $c = -k^{-1}a_i\cdot \log$ locally at $\mathfrak{N}_i$. Recall that $\rho : G_{\Q} \rightarrow \GL_2(\mathbb{T}_{\mathfrak{P}}^0/I^2)$ is given by $\rho = \begin{pmatrix} 1+\eta\cdot \log & \eta b \\ \eta c & 1-\eta \cdot \log \end{pmatrix}$. Since $J_0(N)$ has semi-stable reduction at $N$, we know that there is a quotient line in $\tilde{J}^{(p)}[I^2]$ fixed by $G_{K_{\mathfrak{N}_i}}$. Since the restriction of $\rho-1$ at $G_{K_{\mathfrak{N}_i}}$ is of the form $\eta \cdot \begin{pmatrix}  \log & b \\ c & -\log \end{pmatrix}$, we get that 
$$\det\begin{pmatrix}  \log & b \\ c & -\log \end{pmatrix} = 0 \text{ .}$$
This proves that $bc = -\log^2$. Since $b = ka_i^{-1}\log$, we get $c = -k^{-1}a_i\cdot \log$, as wanted.
\end{proof}

 Locally at $\mathfrak{N}_i$, the cocycle associated to $\Sigma_p$ is given by
 \begin{align*}
\kappa_{\Sigma}(g) &= g(e_1)-e_1 \\& 
= (1+\eta\log(g))\cdot e_1+\eta c(g)\cdot e_2-e_1
\\& = \log(g) \cdot \eta e_1 + c(g)\cdot \eta e_2 \text{ .}
\end{align*}
Similarly, the cocycle associated to $C_p$ is given by
 \begin{align*}
\kappa_{C}(g) &= g(e_2)-e_2 \\&
 = b(g)\cdot \eta e_1 - \log(g)\cdot \eta e_2 \text{ .}
\end{align*}
So locally at $\mathfrak{N}_i$, the Kummer image is
\[\lambda_{i}\kappa_{C}+\mu_{i}\kappa_{\Sigma}=(\mu_i\log+\lambda_i b)\cdot \eta e_1+(\mu_i c-\lambda_i\log)\cdot \eta e_2 \text{ ,}\]
where $\mu_i,\lambda_i\in \Z/p\Z$.
Recall that $\eta e_1 \in \Sigma_p$ corresponds to $\zeta_p \in \mu_p$, which in turns corresponds to $a_i$ in $\Z/p\Z$ via (\ref{eq_coeff_zeta_p_local}).

Therefore, the Kummer image in $$H^1(G_{K_{\mathfrak{N}_i}}, \Z/p\Z) \bigoplus H^1(G_{K_{\mathfrak{N}_i}}, \Z/p\Z) $$
is given by the elements of the form
$$
	\big(a_i \cdot(\mu_i\log(g)+\lambda_i b(g)), \mu_i c(g)-\lambda_i\log(g)\big)
$$
 for $\mu_i,\lambda_i\in \Z/p\Z$.
 
 By Lemma \ref{lemma_description_b_c_g_p>1}, the Kummer image in $$H^1(G_{K_{\mathfrak{N}_i}}, \Z/p\Z) \bigoplus H^1(G_{K_{\mathfrak{N}_i}}, \Z/p\Z) $$
is given by the elements of the form
\begin{equation} \label{kummer image rank >1}
	\big((k\lambda_i + a_i\mu_i)\cdot \log(g), - (\lambda_i+k^{-1}\mu_i a_i)\cdot \log(g)\big)
\end{equation}
 for $\mu_i,\lambda_i\in \Z/p\Z$.

The following is the analogue of Proposition \ref{prop_pic_nontriv_selm}.
\begin{prop}\label{prop_pic_nontriv_selm_g_p>1}
If $\Pic(\mathcal{O}_K[1/N]) \otimes \Z_p \neq 0$ then $\rk_{\mathbf{F}_p} \Sel_I(\tilde{J}^{(p)}/K) > 1$.
\end{prop}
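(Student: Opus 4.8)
The plan is to repeat the argument of Proposition \ref{prop_pic_nontriv_selm} in the present regime $g_p>1$, where the situation is in fact slightly lighter: since $p \mid\mid N-1$, the local mod-$p$ cyclotomic character $\chi_p|_{G_{K_{\mathfrak{N}_i}}}$ is trivial, the cocycle $\phi$ plays no role, and the local Kummer image at $\mathfrak{N}_i$ is the one described by (\ref{kummer image rank >1}) together with Lemma \ref{lemma_description_b_c_g_p>1}.

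First I would consider the projection $\mathrm{pr}\colon \Sel_I(\tilde{J}^{(p)}/K)\to H^1(\mathcal{O}_K[1/N],\mu_p)$ onto the $\Sigma_p$-component and set $V=\Ker(\mathrm{pr})$, so that $V$ consists of the classes $(0,f_C)$ with $f_C\in H^1(G_{K,N},\Z/p\Z)$ satisfying the local Kummer conditions at $\mathfrak{N}_1$ and $\mathfrak{N}_2$ (for a class of this shape the conditions at all the other places, in particular above $p$, hold automatically). The key claim is that $V=\Hom(\Pic(\mathcal{O}_K[1/N]),\Z/p\Z)$, where the latter is regarded inside $H^1(G_{K,N},\Z/p\Z)$ as the everywhere-unramified classes which split at $\mathfrak{N}_1$ and $\mathfrak{N}_2$. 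One inclusion is immediate: such an $f_C$ is locally trivial at $\mathfrak{N}_1$ and $\mathfrak{N}_2$, hence lies in the local Kummer image (\ref{kummer image rank >1}) (take $\lambda_i=\mu_i=0$), so $(0,f_C)\in\Sel_I(\tilde{J}^{(p)}/K)$ and thus in $V$. For the other inclusion, given $(0,f_C)\in V$ I would unwind (\ref{kummer image rank >1}): locally at $\mathfrak{N}_i$ there are $\lambda_i,\mu_i\in\Z/p\Z$ with $(k\lambda_i+a_i\mu_i)\log=0$ and $f_C|_{G_{K_{\mathfrak{N}_i}}}=-(\lambda_i+k^{-1}\mu_i a_i)\log$; since $\log\colon G_{K_{\mathfrak{N}_i}}\to\Z/p\Z$ is surjective ($N$ is totally ramified in $\Q(\zeta_N)$), the relation forces $k\lambda_i+a_i\mu_i=0$, whence $\lambda_i+k^{-1}\mu_i a_i=k^{-1}(k\lambda_i+a_i\mu_i)=0$ and $f_C|_{G_{K_{\mathfrak{N}_i}}}=0$; thus $f_C$ is locally trivial at $\mathfrak{N}_1$ and $\mathfrak{N}_2$, hence everywhere unramified and split at $\mathfrak{N}_1,\mathfrak{N}_2$, i.e. $f_C\in\Hom(\Pic(\mathcal{O}_K[1/N]),\Z/p\Z)$.

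Granting the claim, the hypothesis $\Pic(\mathcal{O}_K[1/N])\otimes\Z_p\neq 0$ gives $\dim_{\mathbf{F}_p}V\geq 1$. To finish I would bring in the cuspidal class $\kappa_C\in\Sel_I(\tilde{J}^{(p)}/K)$, which is nonzero by Lemma \ref{lem_passage_I_p}. By Lemma \ref{lemma_description_b_c_g_p>1} its restriction to $G_{K_{\mathfrak{N}_i}}$ is the cocycle $(a_i b,-\log)=(k\log,-\log)$ in $H^1(G_{K_{\mathfrak{N}_i}},\Z/p\Z)^{\oplus 2}$, which is nonzero, whereas every element of $V$ restricts to $0$ at $\mathfrak{N}_i$ (as just shown); hence $\kappa_C\notin V$, so $\Sel_I(\tilde{J}^{(p)}/K)\supseteq V\oplus\langle\kappa_C\rangle$ has $\mathbf{F}_p$-dimension at least $1+\dim_{\mathbf{F}_p}V\geq 2$, which is the assertion.

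I do not expect a genuine obstruction here: the one delicate point, exactly as in the case $g_p=1$, is the bookkeeping of the Selmer conditions at $\mathfrak{N}_1$ and $\mathfrak{N}_2$ through the explicit shape (\ref{kummer image rank >1}) of the local Kummer image, and in the present case it is genuinely easier because the $\phi$-terms disappear.
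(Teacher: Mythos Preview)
Your proposal is correct and follows essentially the same route as the paper: identify the kernel $V$ of the projection to $H^1(\mathcal{O}_K[1/N],\mu_p)$ with $\Hom(\Pic(\mathcal{O}_K[1/N]),\Z/p\Z)$ by reading off (\ref{kummer image rank >1}) that $k\lambda_i+a_i\mu_i=0$ forces $\lambda_i+k^{-1}\mu_i a_i=0$, and then observe that $\kappa_C$ restricts nontrivially at $\mathfrak{N}_i$ so is not in $V$. The paper's proof is the same argument, even referring back to Proposition \ref{prop_pic_nontriv_selm} for the $\kappa_C$ step rather than recomputing $(a_ib,-\log)=(k\log,-\log)$ as you do.
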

\begin{proof}
Let us prove that the kernel $V$ of the projection map 
$$ \Sel_I(\tilde{J}^{(p)}/K) \rightarrow H^1(\mathcal{O}_K[1/N], \mu_p)$$ is $\Hom(\Pic(\mathcal{O}_K[1/N]), \Z/p\Z)$. Indeed, let $x \in V$, which we can view inside $H^1(G_{K,N}, \Z/p\Z)$. By (\ref{kummer image rank >1}) the restriction of $x$ to $G_{\mathfrak{N}_i}$ is of the form $$ - (\lambda_i+k^{-1}\mu_i a_i)\cdot \log(g)$$ where $\lambda_i, \mu_i \in \Z/p\Z$ are such that 
$$(k\lambda_i + a_i\mu_i)\cdot \log(g) = 0 \text{ .}$$

This latter equation implies $k\lambda_i + a_i\mu_i=0$, \ie $\lambda_i+k^{-1}\mu_i a_i=0$. Thus, $x$ is locally trivial at $\mathfrak{N}_i$ for $i \in \{1, 2\}$. This proves $$x \in \Hom(\Pic(\mathcal{O}_K[1/N]), \Z/p\Z) \text{ .}$$ 
Conversely, $\Hom(\Pic(\mathcal{O}_K[1/N]), \Z/p\Z)$ is contained in $V$ since an element of  $\Hom(\Pic(\mathcal{O}_K[1/N]), \Z/p\Z)$ is locally trivial at $\mathfrak{N}_i$ (and hence in the local Kummer image).

To conclude the proof of the proposition, it suffices to check that the element of $\Sel_I(\tilde{J}^{(p)}/K)$ given by the cuspidal subgroup $C_p$ does not belong to $V$ (\cf Lemma \ref{lem_passage_I_p}). This is exactly the same argument as in the proof of Proposition \ref{prop_pic_nontriv_selm}.
\end{proof}

By Proposition \ref{prop_pic_nontriv_selm_g_p>1}, in order to prove (\ref{eq_main_thm_selmer}), it suffices to consider two cases:
\begin{enumerate}[label=(\Alph*)]
\item $p \nmid h(K)$
\item $p \mid h(K)$ and $\Pic(\mathcal{O}_K[1/N]) \otimes \Z_p = 0$.
\end{enumerate}

The following result, which is an analogue of Proposition \ref{prop_case_A}, deals with case (A).

\begin{prop}\label{prop_case_A>1}
Assume $p \nmid h(K)$. Then $\rk_{\mathbf{F}_p} \Sel_I(\tilde{J}^{(p)}/K) > 1$ if and only if $\log_1(u(K))=0$.
\end{prop}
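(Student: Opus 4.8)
The plan is to run the argument in the proof of Proposition~\ref{prop_case_A} essentially verbatim, exploiting two simplifications afforded by the hypothesis $g_p>1$. First, since $p\mid N-1$ the mod-$p$ cyclotomic character is trivial on each $G_{K_{\mathfrak{N}_i}}=G_{\Q_N}$ and $p\in I^2$, so the $(1,1)$-entry of $\rho$ in (\ref{eq_description_rho}) reduces to $1+\eta\log$ locally at $\mathfrak{N}_i$. Second, by Lemma~\ref{lemma_description_b_c_g_p>1} the cocycle $c$ has no $\phi$-component, so the local Kummer image at $\mathfrak{N}_i$ is the one-parameter family (\ref{kummer image rank >1}), both of whose coordinates are scalar multiples of $\log$ with ratio $-k$; in particular it has no $\phi$-component. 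I note that Lemma~\ref{lemma_log_N_comparison}, Lemma~\ref{lemma_f_C_v_1v_2}, the description (\ref{eq_iso_CFT_K_N_2}) of $\Gal(K(N)/K)$, and the local expansions of $f_\Sigma$ at $\mathfrak{N}_1,\mathfrak{N}_2$ computed in the proof of Proposition~\ref{prop_case_A} do not involve $g_p$ and carry over unchanged.

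Concretely, I would write an arbitrary element of $\Sel_I(\tilde{J}^{(p)}/K)$ as a pair $(f_\Sigma,f_C)$. Using $p\nmid h(K)$ — hence $p\nmid s$ and $\Pic(\mathcal{O}_K[1/N])\otimes\Z_p=0$, so that (\ref{eq_Kummer_fppf}) gives $H^1(\mathcal{O}_K[1/N],\mu_p)\cong\mathcal{O}_K[1/N]^\times\otimes\Z/p\Z$ — I write $f_\Sigma=\pi_1^{\alpha_1}\pi_2^{\alpha_2}u(K)^{\beta}$, and $f_C\in\Hom(\Gal(K(N)/K),\Z/p\Z)$ determined by $v_1=f_C(1,0)$ and $v_2=f_C(0,1)$ as in the proof of Proposition~\ref{prop_case_A}, subject to the relation imposed by (\ref{eq_iso_CFT_K_N_2}) — in particular $v_1=v_2$ once $\log_1(u(K))\neq0$. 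Requiring that $(f_\Sigma,f_C)$ lie in the Kummer image (\ref{kummer image rank >1}) at each $\mathfrak{N}_i$: matching $\log$-coefficients forces $\alpha_i=s^{-1}k\,v_i$ (and leaves one relation between $\lambda_i$ and $\mu_i$, which is automatically solvable), while the requirement that the $\phi$-coefficients of $f_\Sigma$ and of $f_C$ vanish leaves exactly the two constraints
\[
\log_1(\pi_2)\,(v_1-v_2)=0,\qquad s^{-1}k\,(v_2-v_1)\,\log_1(\pi_2)+\beta\,\log_1(u(K))=0
\]
on the remaining free parameters $v_1,v_2,\beta$.

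It then remains to solve this system. If $\log_1(u(K))=0$, the two constraints collapse to $\log_1(\pi_2)(v_1-v_2)=0$, so the solution space has dimension $3$ (when $\log_1(\pi_2)=0$) or $2$ (when $\log_1(\pi_2)\neq0$); in either case $\rk_{\mathbf{F}_p}\Sel_I(\tilde{J}^{(p)}/K)>1$. If $\log_1(u(K))\neq0$, then $v_1=v_2$ is already forced, the first constraint becomes vacuous, and the second forces $\beta=0$, leaving a one-dimensional solution space; since the cuspidal subgroup contributes a nonzero element of $\Sel_I(\tilde{J}^{(p)}/K)$ (Lemma~\ref{lem_passage_I_p}), that space is spanned by it and $\rk_{\mathbf{F}_p}\Sel_I(\tilde{J}^{(p)}/K)=1$. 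This gives the asserted equivalence.

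The main obstacle is the linear-algebra bookkeeping: one must carry the auxiliary parameters $\lambda_i,\mu_i$ and the coefficients $a_i$, $k$ correctly through the identification (\ref{eq_local_N_mu_iso}), check that the $\Sigma_p$- and $C_p$-parts of the local Kummer image are genuinely proportional here (in contrast with the $g_p=1$ case, where the $\phi$ coming from $\chi_p$ makes them independent and the rank in the sub-case $\log_1(\pi_2)\neq0$ came out to be $3$ rather than $2$), and ensure that the constraint $v_1=v_2$ is not double-counted when $\log_1(u(K))\neq0$ already implies it. One should also verify that $a_i$ and $k$ drop out of the final count, so that the criterion reduces to $\log_1(u(K))=0$, independent of the chosen prime $\mathfrak{N}_i$ and of $\log_1(\pi_2)$.
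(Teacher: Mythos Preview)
Your proposal is correct and follows essentially the same approach as the paper's proof: you write a Selmer class as $(f_\Sigma,f_C)$, parametrize $f_\Sigma$ by $(\alpha_1,\alpha_2,\beta)$ and $f_C$ by $(v_1,v_2)$, impose the local Kummer conditions (\ref{kummer image rank >1}) at $\mathfrak{N}_1,\mathfrak{N}_2$, and solve the resulting linear system in the three cases according to whether $\log_1(u(K))$ and $\log_1(\pi_2)$ vanish. Your presentation is slightly more streamlined than the paper's --- you eliminate $\alpha_i=s^{-1}kv_i$ first and reduce to the two constraints on $(v_1,v_2,\beta)$ --- but the substance and the case analysis (ranks $3$, $2$, $1$) are identical.
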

\begin{proof}
We shall use the same notation as in the proof of Proposition \ref{prop_case_A}, in particular $f_{\Sigma}$, $f_C$, $K(N)$ and $s$.
By (\ref{eq_iso_CFT_K_N_2}), our homomorphism $f_C \in H^1(G_{K,N}, \Z/p\Z) = \Hom(\Gal(K(N)/K), \Z/p\Z)$ can be considered as a homomorphism on $\Z/p\Z \times \Z/p\Z$ vanishing on $(\log_1(u(K)), -\log_1(u(K)))$, and is thus determined by its values $v_1 := f_C(1,0)$ and $v_2 := f_C(0,1)$.

Recall that by (\ref{kummer image rank >1}), for each $i \in \{1,2\}$, there exists $\lambda_i, \mu_i \in \Z/p\Z$ such that locally at $\mathfrak{N}_i$
$$f_C = - (\lambda_i+k^{-1}\mu_i a_i)\cdot \log \text{ .}$$
By Lemma \ref{lemma_f_C_v_1v_2}, the local Kummer condition at $\mathfrak{N}_1$ and $\mathfrak{N}_2$ for $f_C$ can be written as:

\begin{equation}\label{local_kummer_f_C>1}
	\begin{array}{lcl}
		0 &=& s^{-1} \log_1(\pi_2) (v_1-v_2),\\
		 - (\lambda_1+k^{-1}\mu_1 a_1) &=& v_1 ,\\
		0 &=& s^{-1} \log_1(\pi_2) (v_2-v_1),\\
		-(\lambda_2 +k^{-1}\mu_2 a_2)  &=&v_2.
	\end{array}
\end{equation}

Recall that, since $p\nmid h(K)$, our cohomology class $f_{\Sigma}$ is of the form 
$$\pi_1^{\alpha_1}\pi_2^{\alpha_2}u(K)^{\beta} =N^{s\alpha_1} \pi_2^{\alpha_2-\alpha_1}u(K)^{\beta}.$$
 Locally at $\mathfrak{N}_1$, we get
$$f_{\Sigma} = -s\alpha_1\cdot \log+ ((\alpha_2-\alpha_1)\log_1(\pi_2)+\beta\log_1(u(K)))\cdot \phi \text{ .}$$
Similarly, locally at $\mathfrak{N}_2$, we get
$$
f_{\Sigma} = -s\alpha_2\cdot \log+ ((\alpha_1-\alpha_2)\log_1(\pi_2)-\beta\log_1(u(K)))\cdot \phi \text{ .}
$$
By (\ref{kummer image rank >1}), the Kummer conditions at $\mathfrak{N}_1$ and $\mathfrak{N}_2$ for $f_{\Sigma}$ can therefore be written as

\begin{equation}\label{local_kummer_f_S>1}
	\begin{array}{lcl}
		0  &=& (\alpha_2-\alpha_1)\log_1(\pi_2)+\beta\log_1(u(K)),\\
		 k\lambda_1+a_1\mu_1 &=& -s\alpha_1 ,\\
		0 &=& (\alpha_1-\alpha_2)\log_1(\pi_2)-\beta\log_1(u(K)),\\
		 k\lambda_2+a_2\mu_2  &=&-s\alpha_2.
	\end{array}
\end{equation}

We now solve equations (\ref{local_kummer_f_C>1}) and (\ref{local_kummer_f_S>1}). Note that the variables are $v_1$, $v_2$, $\alpha_1$, $\alpha_2$ and $\beta$. These variables determine completely $f_{C}$ and $f_{\Sigma}$. We distinguish three cases.

Case 1: $\log_1(u(K))= \log_1(\pi_2)=0$. Our equations (\ref{local_kummer_f_C>1}) and (\ref{local_kummer_f_S>1}) are equivalent to 

$$
	\begin{array}{lcl}
		v_1  &=& k^{-1}s \alpha_1,\\
		v_2 &=& k^{-1}s \alpha_2 ,\\
		k\lambda_1 + a_1\mu_1 &=& -s \alpha_1, \\
		k\lambda_2 + a_2\mu_2 &=& -s \alpha_2 .
	\end{array}
$$
Thus, one can choose $(\alpha_1, \alpha_2, \beta)$ in $(\Z/p\Z)^3$ freely. This shows that in this case $\rk_{\mathbf{F}_p} \Sel_I(\tilde{J}^{(p)}/K) = 3 >1$, as wanted.

Case 2: $\log(u(K)) = 0$ and $\log_1(\pi_2) \neq 0$.  Our equations (\ref{local_kummer_f_C>1}) and (\ref{local_kummer_f_S>1}) are equivalent to
$$
	\begin{array}{lcl}
		v_1  &=& v_2,\\
		v_1  &=& k^{-1}s \alpha_1,\\
		 \alpha_1 &=& \alpha_2 ,\\
		 k\lambda_1 + a_1\mu_1 &=& -s \alpha_1 \\
		k\lambda_1 + a_1\mu_1 &=& k\lambda_2 + a_2\mu_2.
	\end{array}
$$
Thus, one can choose $(\alpha_1, \beta)$ in $(\Z/p\Z)^2$ freely. This shows that in this case $\rk_{\mathbf{F}_p} \Sel_I(\tilde{J}^{(p)}/K) = 2 >1$, as wanted.

Case 3: $\log_1(u(K)) \neq 0$. In this case, note from (\ref{eq_iso_CFT_K_N_2}) that $f$ factors through $\Gal(K(\zeta_N^{(p)})/K)$, where $K(\zeta_N^{(p)})$ is the unique subextension of $K$ of degree $p$ in $K(\zeta_N)$. In other words, we must have $v_1=v_2$. 
Our equations equations (\ref{local_kummer_f_C>1}) and (\ref{local_kummer_f_S>1}) are equivalent to
$$
	\begin{array}{lcl}
		v_1  &=& v_2,\\
		k\lambda_1 + a_1\mu_1 &=& k\lambda_2 + a_2\mu_2,\\ 
		k\lambda_1 + a_1\mu_1 &=& -s \alpha_1,\\ 
		\alpha_1 &=& \alpha_2,\\
		v_1 &=& k^{-1}s \alpha_1,\\
		\beta &=& 0.
	\end{array}
$$
Thus, in this case, we have only one degree of freedom, namely $v_1$. This shows that in this case $\rk_{\mathbf{F}_p} \Sel_I(\tilde{J}^{(p)}/K) = 1$, as wanted.

This concludes the proof of Proposition \ref{prop_case_A}.

\end{proof}

The following result deals with case (B), and thus concludes the proof of Theorem \ref{main_thm_bis}.

\begin{prop}\label{prop_case_B>1}
Assume $p \mid h(K)$ and $\Pic(\mathcal{O}_K[1/N]) \otimes \Z_p=0$. Then $\rk_{\mathbf{F}_p} \Sel_I(\tilde{J}^{(p)}/K) > 1$.
\end{prop}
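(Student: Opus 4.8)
The plan is to reproduce the proof of Proposition~\ref{prop_case_B} almost verbatim, the only changes being that one uses the local Kummer description (\ref{kummer image rank >1}) in place of (\ref{kummer image rank 1}) and Lemma~\ref{lemma_description_b_c_g_p>1} in place of Lemma~\ref{lemma_description_b_c_g_p=1}. As there, the hypotheses $p\mid h(K)$ and $\Pic(\mathcal{O}_K[1/N])\otimes\Z_p=0$ say that the $p$-part of $\Pic(\mathcal{O}_K)$ is generated by $\mathfrak{N}_1$ and $\mathfrak{N}_2$; in particular $p\mid s$, so (\ref{eq_Kummer_fppf}) identifies $H^1(\mathcal{O}_K[1/N],\mu_p)$ with $\mathcal{O}_K[1/N]^\times\otimes\Z/p\Z$, every class being of the form $\pi_1^\alpha N^\beta u(K)^\gamma$ with $\alpha,\beta,\gamma\in\Z/p\Z$ and $\pi_1,N,u(K)$ forming an $\mathbf{F}_p$-basis (by the Dirichlet $S$-unit theorem, using $N^s=\pm\pi_1\pi_2$ and $p\mid s$).

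First I would record the shape of the local Kummer image when $g_p>1$: by (\ref{kummer image rank >1}) and the identity $-(\lambda_i+k^{-1}\mu_i a_i)=-k^{-1}(k\lambda_i+a_i\mu_i)$, it is exactly the line of pairs $(t\,\log,\,-k^{-1}t\,\log)$, $t\in\Z/p\Z$. Hence imposing $f_C=0$ forces $t=0$ at $\mathfrak{N}_1$ and at $\mathfrak{N}_2$, i.e. forces $f_\Sigma$ to restrict trivially at both $\mathfrak{N}_1$ and $\mathfrak{N}_2$. Computing as in Proposition~\ref{prop_case_B} (via Lemma~\ref{lemma_log_N_comparison} for the contribution of $N$, the maps $\log_i$ for the unit contributions, the relations $\log_2(\pi_1)=\log_1(\pi_2)$ and $\log_2(u(K))=-\log_1(u(K))$, and $N^s=\pm\pi_1\pi_2$ with $p\mid s$), the restriction of $f_\Sigma=\pi_1^\alpha N^\beta u(K)^\gamma$ at $\mathfrak{N}_1$ (rewritten as $\pi_2^{-\alpha}N^\beta u(K)^\gamma$) and at $\mathfrak{N}_2$, one finds that the vanishing of both restrictions is equivalent to
\[
\beta=0,\qquad \alpha\log_1(\pi_2)=\gamma\log_1(u(K)).
\]
This is a single (possibly degenerate) linear condition on $(\alpha,\gamma)$, so it always has a nonzero solution, and since $\pi_1$ and $u(K)$ are independent in $\mathcal{O}_K[1/N]^\times\otimes\Z/p\Z$ the resulting $f_\Sigma=\pi_1^\alpha u(K)^\gamma$ is nonzero.

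This produces a nonzero class $(f_\Sigma,0)\in\Sel_I(\tilde{J}^{(p)}/K)$. Finally I would observe that $(f_\Sigma,0)$ is not proportional to the cuspidal class $\kappa_C$: by (\ref{eq_kappa_C}) and Lemma~\ref{lemma_description_b_c_g_p>1}, locally at $\mathfrak{N}_i$ one has $\kappa_C=(b,-\log)$ with $b=ka_i^{-1}\log\ne 0$, so its $C_p$-component is nonzero, whereas every multiple of $(f_\Sigma,0)$ has trivial $C_p$-component. Hence $(f_\Sigma,0)$ and $\kappa_C$ span a two-dimensional $\mathbf{F}_p$-subspace of $\Sel_I(\tilde{J}^{(p)}/K)$, so $\rk_{\mathbf{F}_p}\Sel_I(\tilde{J}^{(p)}/K)>1$, as desired. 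I do not anticipate any genuine obstacle here: the computation is formally identical to that of Proposition~\ref{prop_case_B}, and is in fact a touch cleaner because $\chi_p$ is trivial modulo $p$ locally at $\mathfrak{N}_i$, so no $\phi$-terms enter the Kummer image. The only step warranting a line of justification is that $f_\Sigma$ may be chosen nonzero, which comes down to the linear independence of $\pi_1$ and $u(K)$ in $\mathcal{O}_K[1/N]^\times\otimes\Z/p\Z$.
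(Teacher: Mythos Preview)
Your proposal is correct and follows essentially the same route as the paper's proof: set $f_C=0$, write $f_\Sigma=\pi_1^\alpha N^\beta u(K)^\gamma$, and reduce the local Kummer conditions at $\mathfrak{N}_1,\mathfrak{N}_2$ to $\beta=0$ together with $\alpha\log_1(\pi_2)=\gamma\log_1(u(K))$, then compare with $\kappa_C$. Your observation that in the $g_p>1$ case the local Kummer image is the single line $\{(t\log,-k^{-1}t\log):t\in\Z/p\Z\}$ is a tidy way to see at once that $f_C=0$ forces $f_\Sigma$ to vanish locally; and you are a bit more careful than the paper in making explicit that one can pick $(\alpha,\gamma)\neq(0,0)$, so that $(f_\Sigma,0)$ is genuinely a nonzero Selmer class distinct from any multiple of $\kappa_C$.
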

\begin{proof}
The assumptions $p \mid h(K)$ and $\Pic(\mathcal{O}_K[1/N]) \otimes \Z_p=0$ mean that the $p$ part of $\Pic(\mathcal{O}_K)$ is generated by the prime ideals $\mathfrak{N}_1$ and $\mathfrak{N}_2$ above $N$. In particular, $p \mid s$ (where we recall that $s$ is the order of $\mathfrak{N}_i$ in $\Pic(\mathcal{O}_K)$). 

Recall that an element of $\Sel_I(\tilde{J}^{(p)}/K)$ is given by a pair $(f_{\Sigma}, f_C)$ in $$H^1(\mathcal{O}_K[1/N], \mu_p) \bigoplus H^1(G_{K, N}, \Z/p\Z)$$ satisfying the local Kummer conditions (\ref{kummer image rank >1}) at $\mathfrak{N}_1$ and $\mathfrak{N}_2$.

Let us choose $f_C =0$. By (\ref{kummer image rank >1}), the Kummer conditions at $\mathfrak{N}_1$ and $\mathfrak{N}_2$ are 
\begin{equation}\label{local_kummer_f_C_caseB>1}
	\begin{array}{lcl}
		 \lambda_1+k^{-1}\mu_1 a_1 &=& 0 ,\\
		\lambda_2 +k^{-1}\mu_2 a_2  &=&0.
	\end{array}
\end{equation}

Let us show that one can find $f_{\Sigma} \in H^1(\mathcal{O}_K[1/N], \mu_p)$ such that $(f_{\Sigma}, f_C)$ belongs to $ \Sel_I(\tilde{J}^{(p)}/K) $. As in the proof of Proposition \ref{prop_case_B}, let us thus write $f_{\Sigma} = \pi_1^{\alpha} N^{\beta} u(K)^{\gamma}$. 

The restriction of $f_{\Sigma}$ at $G_{K_{\mathfrak{N}_2}}$ is
$$f_{\Sigma} = -\beta \log + (\alpha \log_2(\pi_1) + \gamma \log_2(u(K))) \phi =  -\beta \log + (\alpha \log_1(\pi_2) - \gamma \log_1(u(K))) \phi  \text{ .}$$
The restriction of $f$ at $G_{K_{\mathfrak{N}_1}}$ is
$$f_{\Sigma} = -\beta \log + (-\alpha \log_1(\pi_2) + \gamma \log_1(u(K))) \phi \text{ .}$$

By (\ref{kummer image rank >1}) the local Kummer conditions at $\mathfrak{N}_1$ and $\mathfrak{N}_2$ are

\begin{equation}\label{local_kummer_f_S_caseB>1}
	\begin{array}{lcl}
		0 &=& -\alpha \log_1(\pi_2) + \gamma \log_1(u(K)), \\
		 k\lambda_1+\mu_1 a_1 &=& -\beta ,\\
		0 &=&  \alpha \log_1(\pi_2) - \gamma \log_1(u(K)),\\
		k\lambda_2 + \mu_2 a_2  &=& -\beta.
	\end{array}
\end{equation}
By (\ref{local_kummer_f_C_caseB>1}), equations (\ref{local_kummer_f_S_caseB>1}) are equivalent to 
\begin{equation}
	\begin{array}{lcl}
		\beta &=& 0, \\
		 \alpha \log_1(\pi_2) - \gamma \log_1(u(K)) &=&0,\\
		  \lambda_1+k^{-1}\mu_1 a_1 &=& 0 ,\\
		\lambda_2 +k^{-1}\mu_2 a_2  &=&0.
	\end{array}
\end{equation}

There is always a choice of $(\alpha,  \beta, \gamma)$ and $(\lambda_1, \lambda_2, \mu_1, \mu_2)$ which satisfies these equations. Thus, we have proved that there exists an element of $ \Sel_I(\tilde{J}^{(p)}/K) $ of the form $(f_{\Sigma}, 0)$. This element is not a multiple of the cuspidal subgroup $\kappa_C \in \Sel_I(\tilde{J}^{(p)}/K) $. Indeed, locally at $\mathfrak{N}_i$, we have seen that $\kappa_c$ is equal to $(b, -\log)$. This is non-zero so cannot be a multiple of $(f_{\Sigma}, 0)$. This proves that $\rk_{\mathbf{F}_p} \Sel_I(\tilde{J}^{(p)}/K) > 1$, as wanted.

\end{proof}

\bibliography{biblio}

\begin{thebibliography}{10}

\bibitem{Manin_constant}
Amod Agashe, Kenneth Ribet, and William~A. Stein.
\newblock The {M}anin constant.
\newblock {\em Pure Appl. Math. Q.}, 2(2, Special Issue: In honor of John H.
  Coates. Part 2):617--636, 2006.

\bibitem{Calegari_Emerton}
Frank Calegari and Matthew Emerton.
\newblock On the ramification of {H}ecke algebras at {E}isenstein primes.
\newblock {\em Invent. Math.}, 160(1):97--144, 2005.

\bibitem{Skinner_BSD}
Francesc Castella, Giada Grossi, and Christopher Skinner.
\newblock Mazur's main conjecture at {E}isenstein primes.
\newblock {\em preprint, https://arxiv.org/abs/2303.04373}.

\bibitem{Ke}
Kestutis Cesnavicius.
\newblock Selmer groups as flat cohomology groups.
\newblock {\em J. Ramanujan Math. Soc.}, 31(1):31--61, 2016.

\bibitem{deShalit_pairing}
Ehud de~Shalit.
\newblock On the {$p$}-adic periods of {$X_0(p)$}.
\newblock {\em Math. Ann.}, 303(3):457--472, 1995.

\bibitem{deShalit}
Ehud de~Shalit.
\newblock {$p$}-adic periods and modular symbols of elliptic curves of prime
  conductor.
\newblock {\em Invent. Math.}, 121(2):225--255, 1995.

\bibitem{Emerton}
Matthew Emerton.
\newblock Supersingular elliptic curves, theta series and weight two modular
  forms.
\newblock {\em J. Amer. Math. Soc.}, 15(3):671--714, 2002.

\bibitem{Sharifi_survey}
Takako Fukaya, Kazuya Kato, and Romyar Sharifi.
\newblock Modular symbols in {I}wasawa theory.
\newblock In {\em Iwasawa theory 2012}, volume~7 of {\em Contrib. Math. Comput.
  Sci.}, pages 177--219. Springer, Heidelberg, 2014.

\bibitem{Fukaya_Kato}
Takako Fukaya and Kato Kazuya.
\newblock On conjectures of {S}harifi.
\newblock In {\em To appear in Kyoto Journal of Mathematics.}

\bibitem{Kolster}
Manfred Kolster.
\newblock {$K$}-theory and arithmetic.
\newblock In {\em Contemporary developments in algebraic {$K$}-theory}, ICTP
  Lect. Notes, XV, pages 191--258. Abdus Salam Int. Cent. Theoret. Phys.,
  Trieste, 2004.

\bibitem{Lecouturier_HV}
Emmanuel Lecouturier.
\newblock On triple product {L}-functions and a conjecture of
  {H}arris--{V}enkatesh.
\newblock {\em To appear in International Mathematics Research Notices}.

\bibitem{Lecouturier_higher}
Emmanuel Lecouturier.
\newblock Higher {E}isenstein elements, higher {E}ichler formulas and rank of
  {H}ecke algebras.
\newblock {\em Invent. Math.}, 223(2):485--595, 2021.

\bibitem{Lecouturier_MT}
Emmanuel Lecouturier.
\newblock On the {M}azur--{T}ate conjecture for prime conductor and {M}azur's
  {E}isenstein ideal.
\newblock {\em American Journal of Mathematics}, 145(5):1365--1387, 2023.

\bibitem{LW_compatibility}
Emmanuel Lecouturier and Jun Wang.
\newblock Level compatibility in {S}harifi's conjecture.
\newblock {\em To appear in Canadian Mathematical Bulletin}.

\bibitem{LW}
Emmanuel Lecouturier and Jun Wang.
\newblock On a conjecture of {S}harifi and {M}azur's {E}isenstein ideal.
\newblock {\em Int. Math. Res. Not. IMRN}, (1):391--421, 2022.

\bibitem{Mazur_Eisenstein}
B.~Mazur.
\newblock Modular curves and the {E}isenstein ideal.
\newblock {\em Inst. Hautes \'{E}tudes Sci. Publ. Math.}, (47):33--186 (1978),
  1977.
\newblock With an appendix by Mazur and M. Rapoport.

\bibitem{Mazur_1979}
B.~Mazur.
\newblock On the arithmetic of special values of {$L$} functions.
\newblock {\em Invent. Math.}, 55(3):207--240, 1979.

\bibitem{Naskrecki}
Bartosz Naskr\k{e}cki.
\newblock On higher congruences between cusp forms and {E}isenstein series.
\newblock In {\em Computations with modular forms}, volume~6 of {\em Contrib.
  Math. Comput. Sci.}, pages 257--277. Springer, Cham, 2014.

\bibitem{Pal_quadratictwist}
Vivek Pal.
\newblock Periods of quadratic twists of elliptic curves.
\newblock {\em Proc. Amer. Math. Soc.}, 140(5):1513--1525, 2012.
\newblock With an appendix by Amod Agashe.

\bibitem{Ribet_Herbrand}
Kenneth~A. Ribet.
\newblock A modular construction of unramified {$p$}-extensions of {$\Q(\mu
  _{p})$}.
\newblock {\em Invent. Math.}, 34(3):151--162, 1976.

\bibitem{Schmidt}
Alexander Schmidt.
\newblock Rings of integers of type {$K(\pi,1)$}.
\newblock {\em Doc. Math.}, 12:441--471, 2007.

\bibitem{Shankar}
Arul Shankar and Jacob Tsimerman.
\newblock {BSD} implies a non-trivial bound on 5-torsion in class groups of
  imaginary quadratic fields.
\newblock {\em preprint, https://arxiv.org/abs/2112.12949}.

\bibitem{Sharifi_Venkatesh}
Romyar Sharifi and Akshay Venkatesh.
\newblock Eisenstein cocycles in motivic cohomology.
\newblock {\em Preprint, https://arxiv.org/abs/2011.07241}.

\bibitem{stacks-project}
The {Stacks project authors}.
\newblock The stacks project.
\newblock \url{https://stacks.math.columbia.edu}, 2023.

\bibitem{Tate_K_2}
John Tate.
\newblock Relations between {$K_{2}$} and {G}alois cohomology.
\newblock {\em Invent. Math.}, 36:257--274, 1976.

\bibitem{Vatsal}
V.~Vatsal.
\newblock Canonical periods and congruence formulae.
\newblock {\em Duke Math. J.}, 98(2):397--419, 1999.

\bibitem{WWE}
Preston Wake and Carl Wang-Erickson.
\newblock The rank of {M}azur's {E}isenstein ideal.
\newblock {\em Duke Math. J.}, 169(1):31--115, 2020.

\bibitem{Weibel}
Charles Weibel.
\newblock Algebraic {$K$}-theory of rings of integers in local and global
  fields.
\newblock In {\em Handbook of {$K$}-theory. {V}ol. 1, 2}, pages 139--190.
  Springer, Berlin, 2005.

\bibitem{Scott}
R.~Scott Williams.
\newblock Level compatibility in the passage from modular symbols to cup
  products.
\newblock {\em Res. Number Theory}, 7(1):Paper No. 9, 26, 2021.

\end{thebibliography}
\bibliographystyle{plain}
\newpage

\end{document}